\definecolor{uuuuuu}{rgb}{0.27,0.27,0.27}
\definecolor{sqsqsq}{rgb}{0.1255,0.1255,0.1255}
\newtheorem{dfn}{Definition} [section]
\newtheorem{thm}[dfn]{Theorem}
\newtheorem{lemma}[dfn]{Lemma}
\newtheorem{coro}[dfn]{Corollary}
\newtheorem{claim}[dfn]{Claim}
\newtheorem{fact}[dfn]{Fact}
\begin{document}
\title{\bf\Large Tight bounds for Katona's shadow intersection theorem}

\date{\today}

\author{Xizhi Liu\thanks{Department of Mathematics, Statistics, and Computer Science, University of Illinois, Chicago, IL, 60607 USA. Email: xliu246@uic.edu.
Research partially supported by NSF award DMS-1763317.}
\and
Dhruv Mubayi\thanks{Department of Mathematics, Statistics, and Computer Science, University of Illinois, Chicago, IL, 60607 USA. Email: mubayi@uic.edu.
Research partially supported by NSF award DMS-1763317.}
}
\maketitle
\begin{abstract}
A fundamental result in extremal set theory is Katona's shadow intersection theorem,
which extends the Kruskal-Katona theorem by giving a lower bound on the size of the shadow of
an intersecting family of $k$-sets in terms of its size. We improve this classical result and
a related result of Ahlswede, Aydinian and Khachatrian by proving  tight bounds for   families that can be quite small.
For example, when $k=3$ our result is sharp for all families with $n$ points and at least $3n-7$ triples.

Katona's theorem was extended by Frankl to families with matching number $s$.
We improve Frankl's result  by giving tight bounds for large $n$.
\end{abstract}

\section{Introduction}
Let $n\ge k \ge \ell\ge 1$.
Given a family $\mathcal{H}\subset \binom{[n]}{k}$ the $\ell$-th shadow of $\mathcal{H}$ is
\begin{align}
\partial_{\ell}\mathcal{H} = \left\{A\in \binom{[n]}{k-\ell}: \exists B\in \mathcal{H} \text{ such that }A\subset B \right\}. \notag
\end{align}
When $\ell = 1$ we write $\partial\mathcal{H}$ and call $\partial\mathcal{H}$ the shadow of $\mathcal{H}$.
The colex order on ${[n] \choose k}$  is defined as follows:
\begin{align}
A \prec B \text{ iff } \max\{(A\setminus B)\cup (B\setminus A)\}\in B. \notag
\end{align}
Write $L_{m}\mathcal{H}$ to denote the set of the first $m$ elements of $\mathcal{H} \subset {[n] \choose k}$ in the colex order. When $\mathcal{H} = {[n] \choose k}$, we abuse notation by simply writing $L_{m}\binom{[n]}{k}$.

The celebrated Kruskal-Katona theorem states that the families in ${[n] \choose k}$ with a fixed number of sets and minimum shadow size are initial elements of the colex order.


\begin{thm}[Kruskal-Katona~\cite{KA68,KR63}]\label{thm-Kruskal-Katona}
For $n\ge k >\ell  \ge 1$ and $\mathcal{H}\subset \binom{[n]}{k}$ with $|\mathcal{H}| = m$,
 $$|\partial_{\ell}\mathcal{H}|\ge \left |\partial_{\ell}L_{m}\binom{[n]}{k}\right|.$$
\end{thm}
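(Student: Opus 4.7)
The plan is to use the shifting/compression method of Frankl. First I would reduce to the case $\ell=1$. Since $\partial_\ell \mathcal{H} = \partial(\partial_{\ell-1}\mathcal{H})$ as sets, it suffices to prove the $\ell=1$ statement and iterate, provided one also knows that $\partial L_m \binom{[n]}{k}$ is itself an initial segment of colex in $\binom{[n]}{k-1}$; this auxiliary fact follows from the recursive description of colex segments and would be proved by induction on the size parameter.

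For $\ell=1$, for each pair $1 \le i < j \le n$ define the shift operator $S_{ij}(\mathcal{H})$ that replaces every $A \in \mathcal{H}$ with $j \in A$, $i \notin A$, and $(A\setminus\{j\})\cup\{i\} \notin \mathcal{H}$ by $(A\setminus\{j\})\cup\{i\}$. The first lemma to establish is that (a) $|S_{ij}(\mathcal{H})| = |\mathcal{H}|$, which is immediate because the operator is injective on the sets it moves, and (b) $|\partial S_{ij}(\mathcal{H})| \le |\partial \mathcal{H}|$, which requires a short case analysis showing that every $(k-1)$-set $B \in \partial S_{ij}(\mathcal{H}) \setminus \partial \mathcal{H}$ must contain $i$ but not $j$, and can be injectively matched with $(B\setminus\{i\})\cup\{j\} \in \partial \mathcal{H}\setminus \partial S_{ij}(\mathcal{H})$. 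Applying the $S_{ij}$ repeatedly until none of them changes the family (which terminates since $\sum_{A \in \mathcal{H}}\sum_{x\in A}x$ strictly decreases under any non-trivial shift) yields a \emph{shifted} family $\mathcal{H}^*$ of size $m$ with $|\partial \mathcal{H}^*| \le |\partial \mathcal{H}|$ and the property that $A \in \mathcal{H}^*$, $x \in A$, $y < x$, $y \notin A$ implies $(A\setminus\{x\})\cup\{y\} \in \mathcal{H}^*$.

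The remaining step is to prove $|\partial \mathcal{H}^*| \ge |\partial L_m \binom{[n]}{k}|$ for any shifted family. I would induct on $k$ (with $n$ and $m$ allowed to vary), splitting $\mathcal{H}^* = \mathcal{H}_0 \sqcup \{A \cup \{n\}: A \in \mathcal{H}_1\}$ with $\mathcal{H}_0 \subset \binom{[n-1]}{k}$ and $\mathcal{H}_1 \subset \binom{[n-1]}{k-1}$. Shiftedness carries over to both $\mathcal{H}_0$ and $\mathcal{H}_1$, and forces a containment of the form $\mathcal{H}_1 \subset \partial \mathcal{H}_0$ (once $\mathcal{H}_0$ is nonempty), yielding the clean recursion $|\partial \mathcal{H}^*| = |\partial \mathcal{H}_0| + |\mathcal{H}_1|$. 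Combined with the cascade representation $m = \binom{a_k}{k} + \binom{a_{k-1}}{k-1} + \cdots + \binom{a_s}{s}$, the inductive hypothesis applied to $\mathcal{H}_0$ and to $\mathcal{H}_1$ gives the desired lower bound $\binom{a_k}{k-1} + \binom{a_{k-1}}{k-2} + \cdots + \binom{a_s}{s-1}$, which is exactly $|\partial L_m \binom{[n]}{k}|$.

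The main obstacle is the case analysis in part (b) of the compression lemma and, more subtly, the bookkeeping in the inductive step: one must choose the split at the top element so that the resulting decomposition of $m$ into $|\mathcal{H}_0|$ and $|\mathcal{H}_1|$ matches the cascade decomposition used by the colex initial segment, so that the two applications of the induction hypothesis combine into the exact inequality required. An alternative route that avoids the colex formalism is to prove the cascade inequality $|\partial \mathcal{H}| \ge \sum_i \binom{a_i}{i-1}$ directly by induction on $k$ after shifting, but this merely relocates the same combinatorial difficulty.
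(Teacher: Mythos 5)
The paper does not itself prove this theorem --- it is cited as the classical Kruskal--Katona theorem \cite{KA68,KR63} --- but the shifting machinery you describe is exactly what the paper deploys to prove its generalization, Lemma~\ref{lemma-shadow-k-s-t-exact}, so the comparison is meaningful. Your reduction to $\ell=1$, the two compression lemmas (size preservation, shadow non-increase, and the injection from $\partial S_{ij}(\mathcal{H})\setminus\partial\mathcal{H}$ into $\partial\mathcal{H}\setminus\partial S_{ij}(\mathcal{H})$), and the termination argument via the weight $\sum_A\sum_{x\in A}x$ are all correct and standard.

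The gap is in the final inductive step. You split $\mathcal{H}^*$ on the \emph{largest} element $n$ into $\mathcal{H}_0$ (sets avoiding $n$) and $\mathcal{H}_1$ (the link of $n$), correctly observe that shiftedness forces $\mathcal{H}_1\subset\partial\mathcal{H}_0$, but then write the recursion as $|\partial\mathcal{H}^*|=|\partial\mathcal{H}_0|+|\mathcal{H}_1|$. This identity is false. Since $\partial\bigl(\{A\cup\{n\}:A\in\mathcal{H}_1\}\bigr)=\mathcal{H}_1\cup\bigl(\{n\}+\partial\mathcal{H}_1\bigr)$ and $\mathcal{H}_1\subset\partial\mathcal{H}_0$, the correct identity under this split is $|\partial\mathcal{H}^*|=|\partial\mathcal{H}_0|+|\partial\mathcal{H}_1|$. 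With that formula, applying the inductive hypothesis to $\mathcal{H}_0$ and $\mathcal{H}_1$ separately does not finish the proof: you still have to determine the worst-case split of $m$ into $|\mathcal{H}_0|+|\mathcal{H}_1|$ subject to $\mathcal{H}_1\subset\partial\mathcal{H}_0$, and show it matches the cascade decomposition --- a genuine minimization that your sketch does not address and that you yourself flag as ``the bookkeeping in the inductive step.''

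The route the paper takes (and the one that actually closes this gap cleanly) is to split on the \emph{smallest} element $1$: with $\mathcal{H}(1)=\{A\setminus\{1\}:1\in A\in\mathcal{H}^*\}$ and $\mathcal{H}(\bar{1})=\{A\in\mathcal{H}^*:1\notin A\}$, shiftedness gives $\partial\mathcal{H}(\bar{1})\subset\mathcal{H}(1)$, hence $|\partial\mathcal{H}^*|=|\mathcal{H}(1)|+|\partial\mathcal{H}(1)|$. The missing ingredient in your proposal is then the contradiction argument that controls the split: if $|\mathcal{H}(1)|<\sum_i\binom{a_i-1}{i-1}$, then $|\mathcal{H}(\bar{1})|=m-|\mathcal{H}(1)|>\sum_i\binom{a_i-1}{i}$, so by induction $|\partial\mathcal{H}(\bar{1})|>\sum_i\binom{a_i-1}{i-1}>|\mathcal{H}(1)|$, contradicting $\partial\mathcal{H}(\bar{1})\subset\mathcal{H}(1)$. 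With $|\mathcal{H}(1)|\ge\sum_i\binom{a_i-1}{i-1}$ in hand, induction on $\mathcal{H}(1)$ and Pascal's identity give $|\partial\mathcal{H}^*|\ge\sum_i\binom{a_i}{i-1}$. This is precisely the structure of Claim~\ref{claim-H(1)-k-s-t} and the surrounding proof of Lemma~\ref{lemma-shadow-k-s-t-exact}.
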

\subsection{Katona's shadow intersection Theorem}
The Kruskal-Katona theorem was extended to families with additional properties.
One such result is due to Katona \cite{KA64} about $t$-intersecting families, which are families in which every two sets have at least $t$ common elements.

\begin{thm}[Katona~\cite{KA64}]\label{thm-Katona-t-intersecting}
Let $n\ge k > t \ge \ell \ge 1$.
If $\mathcal{H} \subset \binom{[n]}{k}$ is $t$-intersecting, then
$$|\partial_{\ell}\mathcal{H}|\ge \frac{\binom{2k-t}{k-\ell}}{\binom{2k-t}{k}} |\mathcal{H}|.$$
\end{thm}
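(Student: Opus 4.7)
My plan is to combine a shifting reduction with Lov\'asz's continuous form of the Kruskal--Katona theorem (Theorem~\ref{thm-Kruskal-Katona}), and to handle the remaining dense regime by exploiting the $t$-intersecting hypothesis directly.

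First I would apply iterated left-shifts $S_{ij}$ (for $i<j$) to reduce to the case where $\mathcal{H}$ is shifted; these operations preserve both $|\mathcal{H}|$ and the $t$-intersecting property (classical, due to Frankl) and do not increase $|\partial_\ell\mathcal{H}|$. Next, writing $|\mathcal{H}|=\binom{x}{k}$ for the unique real $x\ge k$ and invoking the real form of Theorem~\ref{thm-Kruskal-Katona}, one obtains
\[
\frac{|\partial_\ell\mathcal{H}|}{|\mathcal{H}|} \ \ge\ \frac{\binom{x}{k-\ell}}{\binom{x}{k}}.
\]
The right-hand side is strictly decreasing in $x$ and equals the target $\binom{2k-t}{k-\ell}/\binom{2k-t}{k}$ precisely at $x=2k-t$. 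Thus the inequality is immediate in the ``small'' regime $|\mathcal{H}|\le\binom{2k-t}{k}$.

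In the ``large'' regime $|\mathcal{H}|>\binom{2k-t}{k}$, Kruskal--Katona alone is too weak and the $t$-intersecting hypothesis must be put to work. Here I would exploit the shifted structure --- every $B\in\mathcal{H}$ is forced to cluster within a short initial segment --- together with a double-count of incidence pairs $(A,B)$ with $A\in\partial_\ell\mathcal{H}$, $B\in\mathcal{H}$, $A\subset B$, using the identity $\sum_{A}d(A)=\binom{k}{\ell}|\mathcal{H}|$ and an upper bound on the codegrees $d(A)=|\{B\in\mathcal{H}:A\subset B\}|$ that comes from the pairwise $t$-intersection of the supersets of~$A$.

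The dense regime is the main obstacle. A uniform alternative avoiding the case split is Katona's cyclic-permutation method: average over all cyclic orderings of $[n]$; in each such ordering the $k$-intervals of $\mathcal{H}$ are pairwise $t$-intersecting and therefore confined to an arc of starting positions of length at most $k-t$, and a local comparison of these intervals against the $(k-\ell)$-intervals in the corresponding shadow arc yields the global ratio after averaging. Whichever route one follows, the delicate step is calibrating either the codegree estimate or the local interval count so that the coefficient works out to exactly $\binom{2k-t}{k-\ell}/\binom{2k-t}{k}$.
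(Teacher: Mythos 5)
The paper does not actually prove Theorem~\ref{thm-Katona-t-intersecting}; it is cited from Katona~\cite{KA64} and used as a black box, so there is no internal proof against which to compare. Evaluating your proposal on its own merits:

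Your shifting reduction and your treatment of the small regime are sound. Writing $|\mathcal{H}|=\binom{x}{k}$ and observing that $\binom{x}{k-\ell}/\binom{x}{k}=\frac{k!}{(k-\ell)!}\prod_{j=1}^{\ell}(x-k+j)^{-1}$ is strictly decreasing for $x\ge k$, with value $\binom{2k-t}{k-\ell}/\binom{2k-t}{k}$ at $x=2k-t$, does give the conclusion whenever $|\mathcal{H}|\le\binom{2k-t}{k}$ by Kruskal--Katona alone, no intersecting hypothesis needed.

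The dense regime $|\mathcal{H}|>\binom{2k-t}{k}$ is where the theorem actually lives (the extremal family $\binom{[2k-t]}{k}$ sits exactly at the boundary), and neither of your two suggested mechanisms closes the gap. The cyclic-permutation route fails because the normalization intrinsic to that method is $n$-dependent: a $k$-set is an arc in $k!(n-k)!$ cyclic orders and a $(k-\ell)$-set in $(k-\ell)!(n-k+\ell)!$, so a per-ordering arc comparison $|\mathcal{B}_\sigma|\ge c|\mathcal{A}_\sigma|$ only yields $|\partial_\ell\mathcal{H}|\ge c\cdot\frac{k!(n-k)!}{(k-\ell)!(n-k+\ell)!}|\mathcal{H}|$, and the prefactor $\frac{k!(n-k)!}{(k-\ell)!(n-k+\ell)!}$ shrinks with $n$, whereas the target ratio $\binom{2k-t}{k-\ell}/\binom{2k-t}{k}$ is $n$-free. (The circle method is calibrated for $n$-dependent bounds such as EKR, not for $n$-independent shadow ratios.) The codegree double-count also does not deliver: for fixed $A\in\partial_\ell\mathcal{H}$, the family $\{B\setminus A: A\subset B\in\mathcal{H}\}$ of $\ell$-sets is only $(t-k+\ell)$-intersecting, and $t-k+\ell\le 0$ whenever $\ell\le k-t$, so the $t$-intersecting hypothesis gives no codegree bound at all in that range of $\ell$, which includes $\ell=1$. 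So the proposal has a genuine hole. The standard proofs (Katona's original argument, or Frankl's via shifting) rely on a structural lemma you have not invoked: for a shifted $t$-intersecting family, every edge $A$ satisfies $|A\cap[t+2i]|\ge t+i$ for some $i\ge 0$; this is the device that quantifies the ``clustering in a short initial segment'' and feeds into the shadow estimate, and without it the dense case remains unproved.
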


The only case of equality in Theorem \ref{thm-Katona-t-intersecting} is when $n=2k-t$ and $\mathcal{H} \cong \binom{[2k-t]}{k}$ (see \cite{AAK04}).

Theorem~\ref{thm-Katona-t-intersecting} is a foundational result in extremal set theory with many applications.
Its first application was to prove a conjecture of Erd\H os-Ko-Rado on the maximum size of a  $t$-intersecting family in $2^{[n]}$.
It was used to obtain short new proofs for several classical results.
For example, Frankl-F\"{u}redi \cite{FF12} used it to give a short proof for the Erd\H{o}s-Ko-Rado theorem,
and Frankl-Tokushige \cite{FT92} used it to obtain a short proof for the Hilton-Milner theorem.
It also has many applications to Sperner families and other types of intersection problems~\cite{BF19,CG81,FF84,FF86,G80,MB18,S92,Z89}.

This paper is concerned with improving the bounds in Theorem~\ref{thm-Katona-t-intersecting} and related results about shadows of families with certain properties. In many cases the bounds we prove are best possible.

Our first result improves Theorem~\ref{thm-Katona-t-intersecting} for intersecting families  (the case $t=1$) and applies to all  $n > 2k$. It is convenient to define  the family
\begin{align}
EKR(n,k) = \left\{A\in {[n]\choose k}:  1 \in A\right\}. \notag
\end{align}

\begin{thm}\label{thm-shadow-intersecting-all-n}
Let $n > 2k\ge 6$  and $1\le \ell < k$.
Suppose that $\mathcal{H} \subset \binom{[n]}{k}$ is intersecting and
\begin{align}
|\mathcal{H}| = m > m(n,k) =
\begin{cases}
3n-8, & \text{ if } k=3, \\
\binom{n-1}{k-1}-\binom{n-k}{k-1}+\binom{n-k-2}{k-3}+2, & \text{ if } k \ge 4.
\end{cases}\notag
\end{align}
Then $|\partial_{\ell}\mathcal{H}| \ge |\partial_{\ell} L_{m}EKR(n,k)|$.
In particular, if for some $x\in \mathbb{R}$
\begin{align}
|\mathcal{H}| = \binom{x-1}{k-1} > m(n,k)
\end{align}
then $|\partial_{\ell}\mathcal{H}| \ge \binom{x}{k-\ell}$.
\end{thm}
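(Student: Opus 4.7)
The plan is to reduce to a shifted intersecting family by the standard compression argument and then split into two cases according to whether every member of $\mathcal{H}$ contains the element $1$. Applying the $(i,j)$-shift $S_{ij}$ for all pairs $i < j$ iteratively preserves intersectingness and $|\mathcal{H}|$ while not increasing $|\partial_\ell \mathcal{H}|$, so I may assume $\mathcal{H}$ is shifted.

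\textbf{Star case.} Suppose $1 \in A$ for every $A \in \mathcal{H}$. Let $\mathcal{H}' = \{A \setminus \{1\} : A \in \mathcal{H}\} \subseteq \binom{\{2, \ldots, n\}}{k-1}$, so $|\mathcal{H}'| = m$. Splitting $\partial_\ell \mathcal{H}$ according to whether a $(k-\ell)$-set contains $1$, one obtains
\[
|\partial_\ell \mathcal{H}| = |\partial_{\ell-1}\mathcal{H}'| + |\partial_\ell \mathcal{H}'|.
\]
Applying Kruskal-Katona (Theorem~\ref{thm-Kruskal-Katona}) to each summand and using that the colex initial segment of $EKR(n,k)$ corresponds to that of $\binom{\{2,\ldots,n\}}{k-1}$ after prepending $1$, one concludes $|\partial_\ell \mathcal{H}| \ge |\partial_\ell L_m EKR(n,k)|$. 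The ``in particular'' statement then follows from the Lov\'asz real-variable form of Kruskal-Katona: if $|\mathcal{H}'| = \binom{x-1}{k-1}$, the two shadow terms are bounded below by $\binom{x-1}{k-\ell-1}$ and $\binom{x-1}{k-\ell}$, summing to $\binom{x}{k-\ell}$ by Pascal's identity.

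\textbf{Non-star case.} Suppose instead that some $A \in \mathcal{H}$ avoids $1$; shiftedness forces $\{2, \ldots, k+1\} \in \mathcal{H}$. The plan is to establish $|\mathcal{H}| \le m(n,k)$, which contradicts the hypothesis $|\mathcal{H}| > m(n,k)$ and rules out this case. For $k=3$, the threshold $m(n,3) = 3n-8$ is precisely the Hilton-Milner bound, so the contradiction is immediate. For $k \ge 4$, where $m(n,k)$ is strictly smaller than the Hilton-Milner bound, a finer estimate is required. The approach is to write $\mathcal{H} = \mathcal{H}_1 \sqcup \mathcal{H}_{\bar 1}$ based on membership of $1$. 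Intersectingness with $\{2, \ldots, k+1\}$ yields
\[
|\mathcal{H}_1| \le \binom{n-1}{k-1} - \binom{n-k-1}{k-1},
\]
after which I would bound $|\mathcal{H}_{\bar 1}|$ using the shifted intersecting structure on $\binom{\{2,\ldots,n\}}{k}$, exploiting that each member of $\mathcal{H}_{\bar 1}$ must intersect every member of $\mathcal{H}_1$.

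\textbf{Main obstacle.} The delicate part is the non-star bound $|\mathcal{H}| \le m(n,k)$ for $k \ge 4$: since $m(n,k)$ falls short of the Hilton-Milner bound by roughly $\binom{n-k-2}{k-2}$, Hilton-Milner alone does not suffice. I expect the proof to proceed by casework on the smallest non-star sets appearing in $\mathcal{H}_{\bar 1}$ beyond $\{2,\ldots,k+1\}$ (for instance, tracking whether sets of the form $\{2,3,\ldots,k,j\}$ lie in $\mathcal{H}_{\bar 1}$), each such set forcing additional omissions from $\mathcal{H}_1$ and yielding a trade-off that balances exactly at the threshold $m(n,k)$.
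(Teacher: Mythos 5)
Your reduction to shifted families, and the computation in the star case $|\partial_\ell\mathcal{H}| = |\partial_{\ell-1}\mathcal{H}'| + |\partial_\ell\mathcal{H}'|$ followed by Kruskal--Katona (or its Lov\'asz form plus Pascal), are both sound; this essentially reproduces what the paper's Corollary~\ref{coro-ell-shadow-k-s-t-exact} delivers in the special case $EKR(n,k) = EM(n,k,1,1)$. For $k = 3$ the non-star case is also fine, since then $m(n,3) = 3n-8$ equals the Hilton--Milner bound $|HM(n,3,1)|$, so $|\mathcal{H}|>m(n,3)$ does force a star.

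The gap is in the non-star case for $k\ge 4$. There your plan is to derive a contradiction by proving $|\mathcal{H}|\le m(n,k)$, but this inequality is simply false: the Hilton--Milner family $HM(n,k,1)$ is a shifted, non-star, intersecting $k$-graph of size $\binom{n-1}{k-1}-\binom{n-k-1}{k-1}+1$, which exceeds $m(n,k)=|HM(n,k,2)|$ by $\binom{n-k-2}{k-2}-1>0$. So no amount of ``casework on the smallest non-star sets'' can establish the size bound you are aiming for --- the threshold $m(n,k)$ was deliberately chosen below the Hilton--Milner bound, and the theorem's content is precisely that it still holds in this range. The correct move, which the paper takes, is structural rather than size-based: by a theorem of Han and Kohayakawa (Theorem~\ref{thm-HK17-structure-HM'}), $|\mathcal{H}|>m(n,k)$ forces either $\mathcal{H}\subset EKR(n,k)$ or $\mathcal{H}\subset HM(n,k,1)$. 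In the first case one uses the $EM$-version of Kruskal--Katona; in the second, rather than bounding $|\mathcal{H}|$, one bounds $|\partial_\ell\mathcal{H}|$ directly from below using a Kruskal--Katona-type lemma tailored to subfamilies of $HM(n,k,1)$ (Lemma~\ref{lemma-shadow-HM(n,k,s,t)-exact}), which shows the shadow is in fact \emph{strictly larger} than $|\partial_\ell L_m EKR(n,k)|$. You would need some analogue of this shadow lemma for the Hilton--Milner family --- a genuinely new ingredient that your proposal does not supply.
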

%
%

{\bf Remarks.}
\begin{itemize}
    \item
\ For $k=3$ and $m=3n-8$, the inequality $|\partial\mathcal{H}|<|\partial L_{m}EKR(n,k)|$ is possible
(see Fact~\ref{fact1.7} with $t=1$), so Theorem~\ref{thm-shadow-intersecting-all-n} is best possible in this sense. In fact, when $k=3$ one can compute the sharp lower bound for $|\partial{\mathcal H}|$ {\em for all} intersecting families  $\mathcal{H}$ using our proof method but we do not carry out all these details.

\item For fixed $k>3$ and  $n\rightarrow \infty$, we will lower the value of $m(n,k)$ from $(k-1+o(1)){n \choose k-2}$ to   $(3+o(1)){n \choose k-2}$ in Theorem~\ref{thm-EM(n,k,s)-n^k-2} and the constant $3$ will be shown to be  tight.
\end{itemize}
\medskip

Ahlswede, Aydinian, and Khachatrian \cite{AAK04} considered large $t$-intersecting families on $\mathbb{N}$. Let $\binom{\mathbb{N}}{k}$ denote the collection of all $k$-subsets of $\mathbb{N}$
and let $$EM(\mathbb{N},k,s,t) = \left\{A \in \binom{\mathbb{N}}{k}: |A\cap [s]|\ge t\right\}.$$

\begin{thm}[Ahlswede, Aydinian, and Khachatrian~\cite{AAK04}]\label{thm-AAK04}
Let $\mathcal{H}\subset \binom{\mathbb{N}}{k}$ be a $t$-intersecting family.
\begin{itemize}
\item For $1 \le \ell \le t < k$, there exists $m_{1}(k,t,\ell) \in \mathbb{N}$ such that if $|\mathcal{H}|=m\ge m_{1}(k,t,\ell)$,
then $|\partial_{\ell}\mathcal{H}|\ge |\partial_{\ell}L_{m}EM(\mathbb{N},k,2k-2-t,k-1)|$.
\item For $1 \le t < \ell < k$, there exists $m_{2}(k,t,\ell) \in \mathbb{N}$ such that if $|\mathcal{H}|=m\ge m_{2}(k,t,\ell)$,
then $|\partial_{\ell}\mathcal{H}|\ge |\partial_{\ell}L_{m}EM(\mathbb{N},k,t,t)|$.
\end{itemize}
\end{thm}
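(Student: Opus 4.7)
The plan is to combine Frankl's shifting technique with the Ahlswede--Khachatrian structure theorem for $t$-intersecting families, then compare $\ell$-shadows of two candidate extremal families depending on whether $\ell \le t$ or $\ell > t$.

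First I would apply Frankl's shift operators $\sigma_{ij}$ to $\mathcal{H}$. Since shifting preserves $t$-intersection and does not increase $|\partial_\ell \mathcal{H}|$, I may assume $\mathcal{H}$ is left-compressed. Next I would fix the two candidates. The family $\mathcal{F}_1 = EM(\mathbb{N},k,2k-2-t,k-1)$ consists of $k$-sets with at least $k-1$ members in $[2k-2-t]$; it is $t$-intersecting because any two members share at least $2(k-1)-(2k-2-t) = t$ elements of $[2k-2-t]$. The family $\mathcal{F}_2 = EM(\mathbb{N},k,t,t)$ is the star on $[t]$. By the Ahlswede--Khachatrian theorem, any optimally sized $t$-intersecting family on a finite universe is a Frankl family $\mathcal{A}_r = \{A : |A \cap [t+2r]| \ge t+r\}$ for some integer $r \ge 0$; these interpolate between $\mathcal{F}_2$ (at $r=0$) and $\mathcal{F}_1$ (at $r=k-1-t$).

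For the first part ($\ell \le t < k$) I would compute $|\partial_\ell L_m \mathcal{F}_1|$ explicitly: the initial colex segment begins with all $\binom{[2k-2-t]}{k}$ subsets, contributing $\binom{2k-2-t}{k-\ell}$ to the shadow, and each subsequent set in colex order adds at most $\binom{k-1}{k-\ell-1}$ new shadow members via the fresh large element. One then applies Kruskal--Katona to the link $\mathcal{H}(x) = \{A\setminus\{x\} : x \in A \in \mathcal{H}\}$ at a suitably chosen small vertex, combined with a case analysis of the shifted structure, to show that any shifted $t$-intersecting family of size $m \ge m_1(k,t,\ell)$ has $\ell$-shadow at least this value. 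For the second part ($t < \ell < k$), the condition $\ell > t$ means $(k-\ell)$-subsets of sets in $\mathcal{F}_2$ can omit some of $[t]$, so $\partial_\ell L_m \mathcal{F}_2$ for large $m$ is essentially $L_{m'}\binom{\mathbb{N}}{k-\ell}$ for an appropriate $m'$; the comparison with an arbitrary shifted $t$-intersecting family reduces again to Kruskal--Katona after peeling off the core $[t]$.

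The main obstacle is showing that the correct candidate wins in each regime, namely that $\mathcal{A}_{k-1-t}$ minimizes the $\ell$-shadow when $\ell \le t$ while $\mathcal{A}_0$ minimizes it when $\ell > t$. This requires a shadow-monotonicity calculation along the Frankl hierarchy $\mathcal{A}_0, \mathcal{A}_1, \dots, \mathcal{A}_{k-t}$ together with an exchange argument ruling out intermediate values of $r$: moving one set of $\mathcal{A}_r$ towards the winning extreme should not increase the shadow, and the threshold on $\ell$ versus $t$ is exactly the point where the sign of this exchange flips. Finally, the thresholds $m_1(k,t,\ell)$ and $m_2(k,t,\ell)$ enter to exclude small sporadic $t$-intersecting families (such as those built from truncated projective or exceptional designs) whose shadows could otherwise be smaller than the claimed bound for small $m$; once $m$ is large enough, the colex-initial segments of $\mathcal{F}_1$ or $\mathcal{F}_2$ dominate such configurations.
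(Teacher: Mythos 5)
First, a point of order: the paper does not prove this statement at all --- it is quoted verbatim from Ahlswede, Aydinian and Khachatrian \cite{AAK04} and used as a black box --- so there is no in-paper proof to compare yours against. Judged on its own terms, your sketch correctly identifies the two extremal candidates (and your check that $EM(\mathbb{N},k,2k-2-t,k-1)$ is $t$-intersecting is right), but it has a gap at its core. The Ahlswede--Khachatrian complete intersection theorem characterizes the \emph{maximum-size} $t$-intersecting subfamilies of $\binom{[n]}{k}$ for each finite $n$; it says nothing about the structure of an arbitrary $t$-intersecting family of prescribed size $m$ on $\mathbb{N}$. Such a family need not be contained in, or even resemble, any of the Frankl families $\mathcal{A}_r$, so your reduction to a ``shadow-monotonicity calculation along the hierarchy $\mathcal{A}_0,\dots,\mathcal{A}_{k-t}$ plus an exchange argument'' only compares the candidates with one another; it never rules out a general competitor. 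The sentence ``one then applies Kruskal--Katona to the link at a suitably chosen small vertex, combined with a case analysis of the shifted structure'' is precisely where the entire theorem lives, and no argument is given there.

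The gap is especially serious for the first bullet ($\ell\le t$), because no containment reduction is even available: for instance $L_mEM(\mathbb{N},k,t,t)$ is itself a $t$-intersecting family of size $m$ that is not contained in $EM(\mathbb{N},k,2k-2-t,k-1)$, yet the theorem asserts its $\ell$-shadow is at least that of $L_mEM(\mathbb{N},k,2k-2-t,k-1)$. So one genuinely must compare shadows of structurally incomparable families, which neither Kruskal--Katona on a link nor an exchange within the hierarchy accomplishes. For the second bullet a containment route does exist, but it requires the \emph{stability} theorem of Ahlswede--Khachatrian (Theorem \ref{thm-nontrivial-t-intersecting-AK96} in this paper): every $t$-intersecting family above a size threshold is contained in the star $EM(n,k,t,t)$, after which a shadow theorem for subfamilies of $EM(n,k,t,t)$ (Corollary \ref{coro-ell-shadow-k-s-t-exact}) finishes the job --- this is exactly how the present paper proves its own Theorem \ref{thm-shadow-t-intersecting-all-n}. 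Your sketch invokes neither the stability statement nor any substitute for it, so as written the proposal does not constitute a proof of either bullet.
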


For $0 \le t \le \min \{k,s\}$, let
\begin{align}
EM(n,k,s,t) = \left\{A\subset {[n]\choose k}:  |A\cap [s]|\ge t \right\}, \notag
\end{align}
and set $EM(n,k,s,t) = \emptyset$ if $t > \min\{k,s\}$, and  $EM(n,k,s,t) = \binom{[n]}{k}$ if $n \le s$.

For every $m \le \binom{n-t}{k-t}$ we have $L_{m}EM(n,k,t,t) = L_{m}EM(\mathbb{N},k,t,t)$.
Therefore, Theorem \ref{thm-AAK04} implies the following result.

\begin{coro}\label{coro-AAK04}
Let $1 \le t < \ell < k$ and $\mathcal{H}\subset \binom{[n]}{k}$ be a $t$-intersecting family with $|\mathcal{H}| = m > m_{2}(k,t,\ell)$.
Then $|\partial_{\ell}\mathcal{H}|\ge |\partial_{\ell}L_{m}EM(n,k,t,t)|$.
\end{coro}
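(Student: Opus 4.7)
The plan is to derive Corollary~\ref{coro-AAK04} as a direct consequence of Theorem~\ref{thm-AAK04}. As the text remarks, the only nontrivial ingredient is the equality $L_{m}EM(\mathbb{N},k,t,t)=L_{m}EM(n,k,t,t)$ for $m$ in the relevant range, which is a purely combinatorial statement about colex initial segments and has nothing to do with the intersection hypothesis on $\mathcal{H}$.

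First, I would unpack the definitions. The condition $|A\cap[t]|\ge t$ is equivalent to $[t]\subseteq A$, so $EM(\mathbb{N},k,t,t)$ is exactly the family of $k$-subsets of $\mathbb{N}$ that contain $[t]$, and $EM(n,k,t,t)$ is its restriction to $\binom{[n]}{k}$, of cardinality $\binom{n-t}{k-t}$. A member $A$ is determined by the ``tail'' $A\setminus[t]$, a $(k-t)$-subset of $\mathbb{N}\setminus[t]$ (respectively $[n]\setminus[t]$).

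Next, I would verify the initial segment identity. Since every set under consideration contains the common prefix $[t]$, the colex comparison between two such $k$-sets reduces to a colex comparison between their tails inside $\mathbb{N}\setminus[t]$. When $m\le\binom{n-t}{k-t}$, the first $m$ such tails in colex lie entirely in $[n]\setminus[t]$, and hence the corresponding $k$-sets lie in $\binom{[n]}{k}$. This gives $L_{m}EM(\mathbb{N},k,t,t)=L_{m}EM(n,k,t,t)$.

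Finally, I would invoke the second bullet of Theorem~\ref{thm-AAK04}: under the hypotheses $1\le t<\ell<k$ and $|\mathcal{H}|=m>m_{2}(k,t,\ell)$ we obtain $|\partial_{\ell}\mathcal{H}|\ge|\partial_{\ell}L_{m}EM(\mathbb{N},k,t,t)|$, and the right-hand side equals $|\partial_{\ell}L_{m}EM(n,k,t,t)|$ by the identity just established. There is essentially no obstacle of substance here; the only point to be slightly careful about is remaining in the range $m\le\binom{n-t}{k-t}$ so that $L_{m}EM(n,k,t,t)$ is well-defined, which is automatic in the intended regime where $n$ is large compared with $k$, $t$, and $\ell$, and $m$ is polynomial in $n$. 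All the real work sits inside Theorem~\ref{thm-AAK04}.
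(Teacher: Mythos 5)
Your proof is correct and takes exactly the route the paper intends: the paper derives Corollary~\ref{coro-AAK04} by observing that $L_{m}EM(n,k,t,t)=L_{m}EM(\mathbb{N},k,t,t)$ whenever $m\le\binom{n-t}{k-t}$ and then quoting the second bullet of Theorem~\ref{thm-AAK04}. You have simply spelled out the verification of the initial-segment identity (the common prefix $[t]$ reduces the colex comparison to the tails) and the range check, both of which the paper leaves implicit.
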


However, for the case $\ell \le t$ we  show that  the smallest possible size of the $\ell$-th shadow of
large $t$-intersecting families on $[n]$ is different than the formula in Theorem~\ref{thm-AAK04}. Let
\begin{align}
AK(n,k,t) = \left\{ A \in \binom{[n]}{k}: [t] \subset A \text{ and } [t+1,k+1] \cap A \neq\emptyset \right\} \cup
            \left( \bigcup_{i\in [t]}\left\{[k+1]\setminus\{i\} \right\} \right). \notag
\end{align}
Notice that $AK(n,k,t)$ and $EM(n,k,t+2,t+1)$ are both $t$-intersecting,
$$|AK(n,k,t)| \sim (k-t+1)\binom{n}{k-t-1},$$
$$|EM(n,k,t+2,t+1)| \sim (t+2)\binom{n}{k-t-1}.$$
Our next result is a finite version of Theorem~\ref{thm-AAK04}.
\medskip
\begin{thm}\label{thm-shadow-t-intersecting-all-n}
Let $t\ge 1, k\ge 3, 1 \le \ell < k$, and $n> (t+1)(k-t+1)$.
Suppose that $\mathcal{H} \subset \binom{[n]}{k}$ is $t$-intersecting and
\begin{align}
|\mathcal{H}| = m > m(n,k,t) =
\begin{cases}
\max\left\{|AK(n,k,t)|,|EM(n,k,t+2,t+1)|\right\}, & \text{ if } t < \frac{k-1}{2}, \\
|EM(n,k,t+2,t+1)|, & \text{ if } t \ge \frac{k-1}{2}.
\end{cases}\notag
\end{align}
Then $|\partial_{\ell}\mathcal{H}|\ge |\partial_{\ell} L_{m}EM(n,k,t,t)|$.
In particular, if
\begin{align}
|\mathcal{H}| = \binom{x-t}{k-t} > m(n,k,t)
\end{align}
for some $x\in \mathbb{R}$.
Then $|\partial_{\ell}\mathcal{H}|\ge \sum_{i= t-\ell}^{k-\ell}\binom{t}{i}\binom{x-t}{k-\ell-i}$.
For  $1\le \ell \le t$ the  value of $m(n,k,t)$ is tight for $t \ge \frac{k-1}{2}$
and is tight up to a constant multiplicative factor independent of $n$ for $t < \frac{k-1}{2}$.
\end{thm}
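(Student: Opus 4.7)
The plan is to first reduce via standard compression to a shifted family, then perform a dichotomy on whether every member of $\mathcal{H}$ contains $[t]$. Since the shift operators $S_{ij}$ preserve the $t$-intersection property and cannot increase $|\partial_{\ell}\mathcal{H}|$ or $|\mathcal{H}|$, I may assume $\mathcal{H}$ is left-compressed throughout.

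If every $A\in\mathcal{H}$ contains $[t]$, I write $\mathcal{H}=\{[t]\cup G:G\in\mathcal{G}\}$ for some $\mathcal{G}\subset\binom{[t+1,n]}{k-t}$ with $|\mathcal{G}|=m$, and partition $\partial_{\ell}\mathcal{H}$ by $i:=|B\cap[t]|$, which ranges over $\max(0,t-\ell)\le i\le\min(t,k-\ell)$. For each admissible $i$ there are exactly $\binom{t}{i}\cdot|\partial_{\ell+i-t}\mathcal{G}|$ shadow elements (setting $\partial_{0}\mathcal{G}:=\mathcal{G}$). Applying Kruskal--Katona to $\mathcal{G}$, with $|\mathcal{G}|=\binom{x-t}{k-t}$ forcing $|\partial_{\ell+i-t}\mathcal{G}|\ge\binom{x-t}{k-\ell-i}$, summing over $i$ yields the stated lower bound $\sum_{i=t-\ell}^{k-\ell}\binom{t}{i}\binom{x-t}{k-\ell-i}$, which coincides with $|\partial_{\ell}L_{m}EM(n,k,t,t)|$ after the same shadow expansion applied to the colex-initial segment.

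If instead some $A\in\mathcal{H}$ has $[t]\not\subset A$, the task is to derive $|\mathcal{H}|\le m(n,k,t)$, contradicting the hypothesis $|\mathcal{H}|>m(n,k,t)$. By shiftedness one may assume $1\notin A$ for some $A$, and in fact the colex-minimal $k$-set $[2,k+1]$ avoiding $1$ already lies in $\mathcal{H}$; the $t$-intersection property then forces $|B\cap[2,k+1]|\ge t$ for every $B\in\mathcal{H}$. A finer analysis, iteratively comparing $\{B\in\mathcal{H}:i\in B\}$ with $\{B\in\mathcal{H}:i\notin B\}$ for each $i\in[t]$ (these are cross-$t$-intersecting in $\binom{[2,n]}{k-1}$ and $\binom{[2,n]}{k}$ respectively, since $i\notin A$ forces $|A\cap B|=|A\cap(B\setminus\{i\})|\ge t$), should confine $\mathcal{H}$ to a Frankl--Ahlswede--Khachatrian-type family $EM(n,k,t+2r,t+r)$ for some $r\ge 1$, or to a family isomorphic to $AK(n,k,t)$. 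For $n>(t+1)(k-t+1)$ the two largest such candidates are $EM(n,k,t+2,t+1)$ of size $\sim(t+2)\binom{n}{k-t-1}$ and, when $t<(k-1)/2$, $AK(n,k,t)$ of size $\sim(k-t+1)\binom{n}{k-t-1}$; both are bounded by $m(n,k,t)$.

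The principal obstacle lies in this second case: producing a clean, quantitative Hilton--Milner--type classification of shifted $t$-intersecting families on $[n]$ that sit outside the trivial $t$-star $EM(n,k,t,t)$, with a size bound matching $m(n,k,t)$ rather than merely $o\bigl(\binom{n-t}{k-t}\bigr)$. For the tightness claim, I plan a direct computation of the $\ell$-th shadows of $EM(n,k,t+2,t+1)$ and $AK(n,k,t)$ (decomposing by intersection with $[t]$ as in the first case) to verify that each is strictly smaller than $|\partial_{\ell}L_{|\mathcal{F}|}EM(n,k,t,t)|$ at $m=|\mathcal{F}|$, giving exact tightness of $m(n,k,t)$ when $t\ge(k-1)/2$ and tightness up to a constant multiplicative factor when $t<(k-1)/2$.
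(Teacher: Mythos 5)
Your first case is correct and is in fact a cleaner route than the paper's for this particular theorem. When $\mathcal{H}\subset EM(n,k,t,t)$, decomposing each $B\in\partial_\ell\mathcal{H}$ by $i=|B\cap[t]|$ and noting $B\setminus[t]\in\partial_{\ell+i-t}\mathcal{G}$ gives $|\partial_\ell\mathcal{H}|=\sum_i\binom{t}{i}|\partial_{\ell+i-t}\mathcal{G}|$, and term-by-term Kruskal--Katona yields the claim; since colex on $EM(n,k,t,t)$ is just colex on $\binom{[t+1,n]}{k-t}$ with $[t]$ adjoined, the sum matches $|\partial_\ell L_m EM(n,k,t,t)|$. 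The paper instead invokes Corollary~\ref{coro-ell-shadow-k-s-t-exact}, which is proved by a two-parameter induction (Lemma~\ref{lemma-shadow-k-s-t-exact}) covering all $EM(n,k,s,t)$ with $s\ge t$; that extra generality is needed for Theorems~\ref{thm-shadow-intersecting-all-n} and~\ref{thm-EM(n,k,s)-n^k-2} but is overkill here, so your direct decomposition for $s=t$ is perfectly serviceable.

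The genuine gap is exactly where you flag it. The step ``if some $A\in\mathcal{H}$ misses an element of $[t]$ then $|\mathcal{H}|\le m(n,k,t)$'' is not a routine Hilton--Milner computation: it is the content of the Ahlswede--Khachatrian nontrivial $t$-intersecting theorem (Theorem~\ref{thm-nontrivial-t-intersecting-AK96} in the paper, from \cite{AK96}), and the paper's proof simply cites it to conclude $\mathcal{H}\subset EM(n,k,t,t)$ under the hypothesis $|\mathcal{H}|>m(n,k,t)$ and $n>(t+1)(k-t+1)$. Your sketched ``finer analysis'' via cross-$t$-intersecting links would have to recover the full Frankl/Ahlswede--Khachatrian hierarchy $EM(n,k,t+2r,t+r)$ together with $AK(n,k,t)$ and show each is bounded by $m(n,k,t)$ in the stated range of $n$; this is a substantial theorem in its own right, not a lemma one derives in passing. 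There is also a smaller slip in this case: from ``some $A$ has $[t]\not\subset A$'' you cannot deduce by shiftedness alone that some $A$ avoids $1$ — shifting moves elements toward $1$, so a shifted family in which every set contains $1$ but some set misses $j\in[2,t]$ is entirely consistent; your argument that $[2,k+1]\in\mathcal{H}$ therefore needs the stronger starting point that some set actually misses $1$. Finally, your tightness plan (direct shadow computation for $EM(n,k,t+2,t+1)$) matches the paper's Fact~\ref{fact1.7}.
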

%
%
{\bf Remarks.}
\begin{itemize}
       \item
Theorem \ref{thm-shadow-t-intersecting-all-n} implies that for a $t$-intersecting family $\mathcal{H} \subset \binom{[n]}{k}$
with large size, $$\frac{|\partial_{\ell}\mathcal{H}|}{|\mathcal{H}|} > \binom{t}{\ell} \ge  \frac{\binom{2k-t}{k-\ell}}{\binom{2k-t}{k}}$$ for $1 \le \ell \le t$ with equality in the second inequality iff $\ell = t$. Hence our bound is better than that in Theorem \ref{thm-Katona-t-intersecting} (as expected since our bound is best possible).

\item For $t < \frac{k-1}{2}$ we will show in the last section that the lower bound for
$|\mathcal{H}|$ in Theorem \ref{thm-shadow-t-intersecting-all-n} can be improved slightly.
\end{itemize}

\subsection{Frankl's theorem}
The matching number of $\mathcal{H}$, denoted by $\nu(\mathcal{H})$,
is the maximum number of pairwise disjoint edges in $\mathcal{H}$.
Notice that $\nu(EM(n,k,s,1)) \le s$ with equality iff $n \ge ks$ and
$$|EM(n,k,s,1)| = \binom{n}{k}-\binom{n-s}{k} \sim s \binom{n}{k-1} \qquad \hbox{($n \rightarrow \infty$)}.$$
The Erd\H{o}s matching conjecture \cite{ER65} says that for all $n\ge (s+1)k-1$, if $\mathcal{H} \subset \binom{[n]}{k}$
and $\nu(\mathcal{H})\le s$, then
\begin{align} \label{eqEMC}
|\mathcal{H}| \le \max\left\{\binom{(s+1)k-1}{k}, \binom{n}{k}-\binom{n-s}{k}\right\}.
\end{align}

When $s=1$, (\ref{eqEMC}) follows from the  Erd\H{o}s-Ko-Rado theorem \cite{EKR61}.

\begin{thm}[Erd\H{o}s-Ko-Rado~\cite{EKR61}]\label{thm-EKR}
Let $k\ge 2$ and $n\ge 2k$, $\mathcal{H}\subset \binom{[n]}{k}$ be an intersecting family.
Then $\mathcal{H} \le \binom{n-1}{k-1}$ and when $n>2k$ equality holds iff $\mathcal{H} \cong EKR(n,k)$.
\end{thm}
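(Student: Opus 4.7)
The plan is to carry out the short proof of Frankl--F\"uredi~\cite{FF12} mentioned in the introduction, which deduces Theorem~\ref{thm-EKR} from Katona's shadow intersection theorem (Theorem~\ref{thm-Katona-t-intersecting}) by passing to complements. Given intersecting $\mathcal{H}\subseteq\binom{[n]}{k}$, form $\mathcal{H}^c := \{[n]\setminus A : A\in \mathcal{H}\}\subseteq\binom{[n]}{n-k}$. For any $A_1,A_2\in \mathcal{H}$ the intersecting hypothesis forces $([n]\setminus A_1)\cup([n]\setminus A_2)\ne [n]$, so $|([n]\setminus A_1)\cap([n]\setminus A_2)|\ge n-2k+1$; thus $\mathcal{H}^c$ is $t$-intersecting with $t=n-2k+1$.

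Assume first that $n\ge 2k+1$, so that $\ell:=n-2k\ge 1$. I would apply Theorem~\ref{thm-Katona-t-intersecting} to $\mathcal{H}^c$ with $k,t,\ell$ there replaced by $n-k$, $n-2k+1$, $n-2k$; the constraints $n-k>n-2k+1\ge n-2k\ge 1$ hold. Since $2(n-k)-t=n-1$, the bound becomes
\begin{align}
|\partial_{n-2k}\mathcal{H}^c| \ge \frac{\binom{n-1}{k}}{\binom{n-1}{k-1}}\,|\mathcal{H}^c| = \frac{n-k}{k}\,|\mathcal{H}|. \notag
\end{align}
The key combinatorial observation is that $\partial_{n-2k}\mathcal{H}^c\subseteq\binom{[n]}{k}$ is disjoint from $\mathcal{H}$: if $C$ lay in both, then $C\subseteq [n]\setminus A$ for some $A\in\mathcal{H}$ would give $C\cap A=\emptyset$, violating the intersecting property. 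Since the two families sit inside $\binom{[n]}{k}$, this yields $|\mathcal{H}|\bigl(1+\tfrac{n-k}{k}\bigr)\le \binom{n}{k}$, i.e.\ $|\mathcal{H}|\le \tfrac{k}{n}\binom{n}{k}=\binom{n-1}{k-1}$.

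For the uniqueness when $n>2k$, I would invoke the equality clause recorded just beneath Theorem~\ref{thm-Katona-t-intersecting}. Equality in $|\mathcal{H}|\le\binom{n-1}{k-1}$ forces equality in both the Katona bound and the disjointness $|\mathcal{H}|+|\partial_{n-2k}\mathcal{H}^c|\le\binom{n}{k}$. The former says $\mathcal{H}^c$ spans only $2(n-k)-t=n-1$ points, say $[n]\setminus\{v\}$, and equals $\binom{[n]\setminus\{v\}}{n-k}$; equivalently every set of $\mathcal{H}$ contains $v$, and the count $|\mathcal{H}|=\binom{n-1}{k-1}$ then forces $\mathcal{H}$ to be all $k$-sets through $v$, giving $\mathcal{H}\cong EKR(n,k)$.

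The boundary case $n=2k$ sits outside the hypothesis $\ell\ge 1$ of Theorem~\ref{thm-Katona-t-intersecting} and I would treat it directly: the involution $A\mapsto [n]\setminus A$ partitions $\binom{[n]}{k}$ into $\binom{2k-1}{k-1}$ complementary pairs, and an intersecting family selects at most one member of each pair; uniqueness is not asserted here, in agreement with the statement. The main subtle step I anticipate is the structural passage in paragraph three — converting equality in Katona's ratio into the claim that $\mathcal{H}^c$ uses only $n-1$ points — but this is precisely the content of the equality remark following Theorem~\ref{thm-Katona-t-intersecting}, so no new work is needed.
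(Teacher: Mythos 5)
The paper does not prove Theorem~\ref{thm-EKR}; it is stated as a classical result and cited directly from~\cite{EKR61}, so there is no proof in the paper to compare against. Your argument is a correct reconstruction of the Frankl--F\"uredi complementation proof that the introduction alludes to: $\mathcal{H}^c$ is an $(n-2k+1)$-intersecting family of $(n-k)$-sets, Theorem~\ref{thm-Katona-t-intersecting} with $\ell=n-2k$ gives $|\partial_{n-2k}\mathcal{H}^c|\ge\tfrac{n-k}{k}|\mathcal{H}|$ since $2(n-k)-(n-2k+1)=n-1$, and disjointness of $\partial_{n-2k}\mathcal{H}^c$ from $\mathcal{H}$ inside $\binom{[n]}{k}$ yields $|\mathcal{H}|\le\binom{n-1}{k-1}$. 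The parameter checks ($n-k>n-2k+1\ge n-2k\ge 1$ for $n\ge 2k+1$, $k\ge 2$), the treatment of the boundary case $n=2k$ by complementary pairs, and the double-equality forcing both Katona and the disjointness bound to be tight are all in order.

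One small caveat worth flagging: the equality remark after Theorem~\ref{thm-Katona-t-intersecting} is phrased as ``when $n=2k-t$ and $\mathcal{H}\cong\binom{[2k-t]}{k}$''. Read literally, this would exclude your situation, since the ambient ground set of $\mathcal{H}^c$ has $n$ points while $2(n-k)-(n-2k+1)=n-1$. The intended meaning, which you use correctly, is that the extremal family is (isomorphic to) all $(n-k)$-subsets of some $(n-1)$-point set, i.e.\ $\mathcal{H}^c=\binom{[n]\setminus\{v\}}{n-k}$, so that every member of $\mathcal{H}$ contains $v$ and $|\mathcal{H}|=\binom{n-1}{k-1}$ then forces $\mathcal{H}\cong EKR(n,k)$. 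Nothing in your proof depends on the stricter reading, so the argument stands.
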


The Erd\H{o}s matching conjecture is still open  and
the current record on this conjecture is due to Frankl \cite{FR13}.

\begin{thm}[Frankl~\cite{FR13}]\label{thm-Frankl-Matching-Conj}
Let $k\ge 2$ and $n\ge (2s+1)k - s$, $\mathcal{H}\subset \binom{[n]}{k}$ and $\nu(\mathcal{H}) \le s$.
Then $\mathcal{H} \le \binom{n}{k} - \binom{n-s}{k}$ with equality iff $\mathcal{H} \cong EM(n,k,s,1)$.
\end{thm}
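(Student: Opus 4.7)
I would prove Theorem~\ref{thm-Frankl-Matching-Conj} by the shifting method combined with induction on $s$. The base case $s = 1$ is the Erd\H{o}s-Ko-Rado theorem (Theorem~\ref{thm-EKR}), which applies already for $n \ge 2k$, so I may assume $s \ge 2$ and that the statement holds with $s$ replaced by $s-1$.

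First, apply the $(i,j)$-shifting operations with $i < j$ to $\mathcal{H}$. Each such operation preserves $|\mathcal{H}|$ and cannot increase $\nu(\mathcal{H})$, so I may assume $\mathcal{H}$ is shifted. Decompose $\mathcal{H}$ at element $1$: set $\mathcal{A} = \{A \setminus \{1\} : 1 \in A \in \mathcal{H}\} \subseteq \binom{[2,n]}{k-1}$ and $\mathcal{B} = \{A \in \mathcal{H} : 1 \notin A\} \subseteq \binom{[2,n]}{k}$, so that $|\mathcal{H}| = |\mathcal{A}| + |\mathcal{B}|$. Both subfamilies remain shifted on $[2,n]$ and $\nu(\mathcal{B}) \le s$. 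The proof now splits on $\nu(\mathcal{B})$.

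If $\nu(\mathcal{B}) \le s - 1$, the inductive hypothesis applied on $[2,n]$, whose size hypothesis $n - 1 \ge (2s-1)k - (s-1)$ follows from $n \ge (2s+1)k - s$ for $k \ge 2$, gives $|\mathcal{B}| \le \binom{n-1}{k} - \binom{n-s}{k}$. Combined with $|\mathcal{A}| \le \binom{n-1}{k-1}$ and Pascal's identity, this yields $|\mathcal{H}| \le \binom{n}{k} - \binom{n-s}{k}$, and equality forces $\mathcal{A} = \binom{[2,n]}{k-1}$ and, via induction, $\mathcal{B}$ to be the extremal matching-$(s{-}1)$ family on $[2,n]$, hence $\mathcal{H} \cong EM(n,k,s,1)$.

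The crux is $\nu(\mathcal{B}) = s$. Shiftedness of $\mathcal{B}$ together with $\nu(\mathcal{B}) \ge s$ yields $s$ pairwise disjoint edges $N_1, \ldots, N_s \in \mathcal{B}$ whose union lies in $[2, sk+1]$. If some $A \in \mathcal{H}$ had $A \cap [2, sk+1] = \emptyset$, then $\{A, N_1, \ldots, N_s\}$ would be a matching of size $s+1$ in $\mathcal{H}$, contradicting $\nu(\mathcal{H}) \le s$. Iterating this shifting-exchange argument block by block across each $N_i$ strengthens the conclusion: every $A \in \mathcal{H}$ must meet $[2, sk+1]$ in a constrained pattern, capping $|\mathcal{H}|$ by a polynomial in $n$ of degree $k-1$ with leading coefficient strictly less than $s$. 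For $n \ge (2s+1)k - s$ the lower-order terms are dominated and this is strictly smaller than $\binom{n}{k} - \binom{n-s}{k} = s\binom{n-1}{k-1} - \Theta(\binom{n}{k-2})$, ruling out extremal $\mathcal{H}$ in this subcase. The main obstacle is exactly this quantitative step: converting ``$\mathcal{B}$ shifted with $\nu(\mathcal{B}) = s$'' into an enumeration of admissible intersection patterns $A \cap [2, sk+1]$ that is sharp enough to beat the target bound precisely at the threshold $n = (2s+1)k - s$. This is where Frankl's refined exchange argument is needed, where $(i,j)$-shifting is applied not only to $\mathcal{H}$ but iteratively to auxiliary subfamilies indexed by their intersection pattern with the canonical matching $\{N_1, \ldots, N_s\}$; the bookkeeping that pins down the cutoff $(2s+1)k - s$ is the delicate part.
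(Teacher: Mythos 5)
The paper does not prove Theorem~\ref{thm-Frankl-Matching-Conj} at all: it is quoted from Frankl's paper \cite{FR13}, and the authors merely cite it. So there is no ``paper's own proof'' to compare against; I can only assess your proposal on its own merits, with the caveat that the paper itself points to where the real difficulty lies.

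Your framework (shift, split at vertex $1$ into $\mathcal{A}$ and $\mathcal{B}$, induct on $s$) is a natural start, and the case $\nu(\mathcal{B})\le s-1$ is handled correctly, including the arithmetic check that the induction hypothesis on $[2,n]$ is in force and the recovery of the extremal characterization from Pascal's identity. But the case $\nu(\mathcal{B})=s$ contains a genuine gap, and you admit as much. The observation that every $A\in\mathcal{H}$ must meet $[2,sk+1]$ is correct but far too weak quantitatively: it gives $|\mathcal{H}|\le\binom{n}{k}-\binom{n-sk}{k}$, whose leading coefficient (as a degree-$(k-1)$ polynomial in $n$) is $sk$, not something below $s$. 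Your subsequent claim that ``iterating this shifting-exchange argument block by block'' caps $|\mathcal{H}|$ at degree $k-1$ with leading coefficient strictly below $s$ is precisely the theorem's content in this case, and no argument is supplied for it. The paragraph correctly names the bookkeeping around the threshold $(2s+1)k-s$ as the delicate part, but that is the entire theorem; the outline does not close that gap.

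A structural symptom that the outline is off the trail: the paper explicitly notes that Theorem~\ref{franklnu=s} (Frankl's shadow bound $|\partial\mathcal{H}|\ge |\mathcal{H}|/s$ for families with matching number $s$) ``is a crucial tool in the proof of Theorem~\ref{thm-Frankl-Matching-Conj},'' yet your proposal never invokes shadows at all. Frankl's actual argument in \cite{FR13} bounds $|\mathcal{H}|$ by comparing the sizes of carefully chosen slices of the shifted family against their shadows, and this is exactly how the leading coefficient drops from $sk$ to $s$ and how the specific cutoff $n\ge(2s+1)k-s$ emerges. Any serious attempt at this theorem would need to bring in that shadow estimate (or an equivalent tool of comparable strength); a pure intersection-pattern count on $[2,sk+1]$ will not reach the required precision.
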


If we take $t=1$ in Theorem \ref{thm-Katona-t-intersecting},
then every intersecting family
$\mathcal{H} \subset \binom{[n]}{k}$ satisfies $|\partial\mathcal{H}|\ge |\mathcal{H}|$. Frankl generalized this as follows.

\begin{thm} [Frankl~\cite{FR91, FR13}] \label{franklnu=s}
Let $n \ge k \ge 2$ and $\mathcal{H} \subset {[n] \choose k}$. If $\nu(\mathcal{H})=s\ge 1$,
then $$|\partial\mathcal{H}|\ge \frac{|\mathcal{H}|}{s}$$
with equality iff $\mathcal{H}\cong \binom{[(s+1)k-1]}{k}$.
\end{thm}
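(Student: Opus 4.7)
My plan is to reduce to shifted families and then induct on $n+k$, using a shadow identity obtained by peeling off the largest vertex. Shifting preserves $|\mathcal{H}|$, cannot increase $\nu(\mathcal{H})$, and cannot increase $|\partial\mathcal{H}|$, so we may assume $\mathcal{H}$ is shifted. Let $\mathcal{H}'=\{A\in\mathcal{H}:n\notin A\}$ and $\mathcal{L}=\{A\setminus\{n\}:n\in A\in\mathcal{H}\}\subset\binom{[n-1]}{k-1}$. A consequence of shiftedness is that $\mathcal{L}\subseteq\partial\mathcal{H}'$: for any $A\ni n$ in $\mathcal{H}$ and any $j<n$ with $j\notin A$ (which exists because $|A|=k<n$), the shift $(A\setminus\{n\})\cup\{j\}$ lies in $\mathcal{H}'$. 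Partitioning the $(k-1)$-sets in $\partial\mathcal{H}$ by whether they contain $n$ then gives
\[
|\partial\mathcal{H}|=|\partial\mathcal{H}'|+|\partial\mathcal{L}|,\qquad |\mathcal{H}|=|\mathcal{H}'|+|\mathcal{L}|.
\]

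For the base case $n=(s+1)k-1$, writing $|\mathcal{H}|=\binom{x}{k}$ for real $x\ge k$, the Lov\'asz form of Kruskal-Katona (Theorem~\ref{thm-Kruskal-Katona}) yields $|\partial\mathcal{H}|\ge\binom{x}{k-1}$, so $|\partial\mathcal{H}|/|\mathcal{H}|\ge k/(x-k+1)\ge 1/s$ since $x\le n=(s+1)k-1$; equality forces $x=(s+1)k-1$ and hence $\mathcal{H}\cong\binom{[(s+1)k-1]}{k}$. For the inductive step $n\ge(s+1)k$, I would combine the shadow identity with two applications of the strong induction hypothesis: $|\partial\mathcal{H}'|\ge|\mathcal{H}'|/s$ (reducing $n$) and $|\partial\mathcal{L}|\ge|\mathcal{L}|/s$ (reducing $k$). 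The latter requires $\nu(\mathcal{L})\le s$, which can be established for $n\ge(s+1)k+1$ as follows: given a matching $M_1,\ldots,M_{s+1}\in\mathcal{L}$, the set $[n-1]\setminus\bigcup_i M_i$ has $n-1-(s+1)(k-1)\ge s+1$ elements, so we may pick distinct $j_1,\ldots,j_{s+1}$ from it and shift each $M_i\cup\{n\}\in\mathcal{H}$ to $M_i\cup\{j_i\}\in\mathcal{H}$, producing a matching of size $s+1$ in $\mathcal{H}$ and contradicting $\nu(\mathcal{H})\le s$. Summing the two IH bounds then closes the step.

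The main obstacle is the boundary case $n=(s+1)k$, where the counting $n-1-(s+1)(k-1)\ge s+1$ fails by exactly one and $\nu(\mathcal{L})$ may genuinely exceed $s$ (as happens for $\mathcal{H}=\binom{[(s+1)k-1]}{k}$ viewed on $[n]$). I would address this case either by invoking Frankl's matching bound (Theorem~\ref{thm-Frankl-Matching-Conj}) to upper-bound $|\mathcal{H}|$ directly, or by refining the decomposition to peel off the pair $(n-1,n)$ simultaneously so that the extra slack restores the required inequality. Finally, the equality analysis traces the tight Kruskal-Katona case back through each inductive step: any equality at $n>(s+1)k-1$ forces $\mathcal{L}=\emptyset$, so $\mathcal{H}$ is supported on $[n-1]$, and iterating reduces to the base case, pinning down $\mathcal{H}\cong\binom{[(s+1)k-1]}{k}$ up to isomorphism.
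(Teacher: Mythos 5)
The paper does not prove Theorem~\ref{franklnu=s} at all --- it is cited from Frankl~\cite{FR91,FR13} and used as a black box --- so there is no in-paper proof to compare against. Your outline is a faithful reconstruction of Frankl's argument: shift, peel off the largest vertex to get $|\partial\mathcal{H}|=|\partial\mathcal{H}'|+|\partial\mathcal{L}|$, and induct. (It is worth noting that the induction in this paper's own Lemmas~\ref{lemma-shadow-k-s-t-exact} and~\ref{lemma-shadow-HM(n,k,s,t)-exact} peels off vertex $1$ and uses the identity $|\partial\mathcal{H}|\ge|\mathcal{H}(1)|+|\partial\mathcal{H}(1)|$, which is a different, complementary decomposition of a shifted family; your identity via the top vertex is the one suited to bounding $\nu(\mathcal{L})$.)

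One correction: your ``main obstacle'' at $n=(s+1)k$ is not actually there. You asked for $s+1$ spare vertices because you planned to shift every $M_i\cup\{n\}$ off of $n$, but you only need to shift $s$ of them --- keep $M_{s+1}\cup\{n\}$ as is. Given a matching $M_1,\dots,M_{s+1}$ in $\mathcal{L}$, the free set $[n-1]\setminus\bigcup_iM_i$ has $n-1-(s+1)(k-1)\ge s$ elements for all $n\ge(s+1)k$; choosing $j_1,\dots,j_s$ there, the $s+1$ edges $M_1\cup\{j_1\},\dots,M_s\cup\{j_s\},M_{s+1}\cup\{n\}$ of $\mathcal{H}$ are pairwise disjoint, contradicting $\nu(\mathcal{H})=s$. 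So $\nu(\mathcal{L})\le s$ holds throughout the inductive range $n\ge(s+1)k$, and the fallback via Theorem~\ref{thm-Frankl-Matching-Conj} (which anyway is a much deeper result and in fact \emph{uses} the present theorem in its proof, so invoking it here would be circular) is unnecessary. A small remaining housekeeping item is that the induction on $k$ descends to $k=1$, where $\partial\mathcal{L}=\{\emptyset\}$ and $\nu(\mathcal{L})=|\mathcal{L}|\le s$ makes the bound $|\partial\mathcal{L}|\ge|\mathcal{L}|/s$ immediate; you should state that the theorem (with the same extremal family $\binom{[s]}{1}$) is taken to hold for $k=1$ so the step from $k=2$ is covered.
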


Theorem~\ref{franklnu=s} is a crucial tool in the proof of Theorem~\ref{thm-Frankl-Matching-Conj} and any improvement in Theorem~\ref{franklnu=s} for small values of $n$  could lead to a corresponding improvement in Theorem~\ref{thm-Frankl-Matching-Conj}. Our final result provides such an improvement (for large $n$) that is sharp if $|\mathcal{H}|$ is large.

\begin{thm}\label{thm-EM(n,k,s)-n^k-2}
For every $k\ge 3$ and every $s\ge 1$ there exists  $m(n,k,s)$  such that the following holds as $n \rightarrow \infty$.
Suppose that $\mathcal{H} \subset \binom{[n]}{k}$ satisfies $\nu(\mathcal{H})\le s$ and
\begin{align}\label{hlowerc}
|\mathcal{H}| = m > m(n,k,s) =
\begin{cases}
(3+o(1)){n \choose k-2} & \text{ if } s=1 \\
(\binom{2s+1}{2}+o(1)){n \choose k-2} & \text{ if } k=3 \\
(k\binom{s+1}{2} + o(1)){n \choose k-2} & \text{ if } k \ge 4, s \ge 2.
\end{cases}
\end{align}
Then $$|\partial\mathcal{H}| \ge |\partial L_{m}EM(n,k,s,1)|.$$
In particular, if $|\mathcal{H}| = \binom{x}{k} - \binom{x-s}{k} > m(n,k,s)$
for some $x \in \mathbb{R}$, then $|\partial\mathcal{H}| \ge \binom{x}{k-1}$.
\end{thm}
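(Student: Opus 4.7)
The plan is to combine a shifting reduction with a structural dichotomy for shifted families of matching number at most $s$. First, apply the standard shifting (compression) operations to $\mathcal{H}$: these preserve $|\mathcal{H}|$, do not increase $|\partial\mathcal{H}|$, and do not increase $\nu(\mathcal{H})$, so we may assume $\mathcal{H}$ is shifted. A key structural observation is the resulting dichotomy: either $\mathcal{H} \subseteq EM(n,k,s,1)$, or else $[s+1,\,s+k] \in \mathcal{H}$, since any edge of a shifted family that is disjoint from $[s]$ can be shifted down to $[s+1,\,s+k]$.

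For the first alternative, I would establish a \emph{relative} Kruskal-Katona theorem: among subfamilies of $EM(n,k,s,1)$ of fixed size $m$, the colex-initial segment $L_m EM(n,k,s,1)$ minimizes the shadow. The proof is by partitioning $\mathcal{H}$ according to $\min(A \cap [s])$, so that each block takes the form $\{i\} \cup \mathcal{B}_i$ for some $(k-1)$-uniform family $\mathcal{B}_i$ on $[i+1, n]$; applying Theorem~\ref{thm-Kruskal-Katona} to each $\mathcal{B}_i$ and verifying that the colex order on $EM(n,k,s,1)$ respects this partition yields block-wise optimality and the desired inequality.

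For the second alternative, where $[s+1, s+k] \in \mathcal{H}$, I would analyze $\mathcal{H}$ by decomposing it according to its intersection with a bounded kernel set such as $[s+k]$. For each pattern $I \subseteq [s+k]$, the link family $\mathcal{H}_I = \{A \setminus I : A \in \mathcal{H},\ A \cap [s+k] = I\}$ lives on $[s+k+1, n]$ and its matching number is controlled by the global bound $\nu(\mathcal{H}) \le s$. Applying Theorem~\ref{thm-Frankl-Matching-Conj} to each link and summing should produce either the estimate $|\mathcal{H}| \le m(n,k,s)$ (contradicting the hypothesis and reducing to the first alternative) or a direct shadow lower bound, via Frankl's Theorem~\ref{franklnu=s} applied to high-degree links, that dominates $|\partial L_m EM(n,k,s,1)|$ in the relevant range of $m$.

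The main obstacle is establishing the sharp constants in $m(n,k,s)$. Matching the thresholds $\binom{2s+1}{2}\binom{n}{k-2}$ for $k=3$, $k\binom{s+1}{2}\binom{n}{k-2}$ for $k\ge 4$ and $s\ge 2$, and particularly $3\binom{n}{k-2}$ for $s=1$ and $k \ge 4$ demands a Hilton-Milner-style classification of the near-extremal shifted families and a refined link analysis that goes beyond the coarser threshold in Theorem~\ref{thm-shadow-intersecting-all-n}. Carefully coordinating the distinguished edge $[s+1,\,s+k]$ with the matching constraint to identify the precise extremal trace patterns on a bounded kernel will be the technical heart of the argument.
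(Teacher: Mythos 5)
Your high-level skeleton (shift, then split into the case $\mathcal{H}\subseteq EM(n,k,s,1)$ versus the case where some edge avoids $[s]$) is reasonable, but the proposal has two genuine gaps, one technical and one strategic.

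First, the ``relative Kruskal--Katona'' argument you sketch does not go through as stated. The colex order on $EM(n,k,s,1)$ does \emph{not} respect the partition by $\min(A\cap[s])$: for example $\{2,3,4\}\prec\{1,4,5\}$ while $\{1,2,3\}\prec\{2,3,4\}$, so $L_m EM(n,k,s,1)$ is not a union of colex-initial segments of the blocks $\{i\}\cup\mathcal{B}_i$, and applying Theorem~\ref{thm-Kruskal-Katona} block by block does not recover the claimed bound. Moreover, the shadows of different blocks overlap heavily, so even block-wise optimality would not sum to the right thing. The paper instead proves Lemma~\ref{lemma-shadow-k-s-t-exact} by induction on $k,s,t$ using the shifted split $\mathcal{H}=\mathcal{H}(1)\cup(\mathcal{H}(\bar 1))$, which crucially exploits that $\mathcal{H}(\bar1)\subset EM(n,k,s-1,t)$ and $\mathcal{H}(1)\subset EM(n,k-1,s-1,t-1)$, together with the identity $|\partial\mathcal{H}|=|\mathcal{H}(1)|+|\partial\mathcal{H}(1)|$ for shifted families.

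Second, and more seriously, the treatment of the case $[s+1,s+k]\in\mathcal{H}$ is where the real work lies and your proposal does not actually contain a mechanism that gets the job done. Decomposing by traces on $[s+k]$ and invoking Theorem~\ref{thm-Frankl-Matching-Conj} or Theorem~\ref{franklnu=s} on each link does not control the \emph{shadow}; Theorem~\ref{franklnu=s} gives only $|\partial\mathcal{H}|\ge|\mathcal{H}|/s$, which is too weak since the target is $|\partial L_m EM(n,k,s,1)|$. The paper's actual engine is quite different: a maximum-degree bound (Claim~\ref{claim-max-degree-H}, which follows from the target shadow bound plus Kruskal--Katona applied to a link) lets one iteratively remove $s-1$ vertices, each time invoking the Kostochka--Mubayi theorem (Theorem~\ref{thm-KM17-induction-n^k-2}) which guarantees that any family with $\nu=s'$ and size above a threshold $\sim c\binom{n}{k-2}$ has a vertex whose deletion drops the matching number. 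This descends to an intersecting family $\mathcal{H}_{s-1}$ of size $\gtrsim m/s$, to which the stability theorem classifying near-extremal intersecting families into $H_0^k,\ldots,H_5^k,EKR$ (Theorem~\ref{thm-MK17-structure-nv(H)=s-stability} for $k\ge4$, Theorems~\ref{thm-MK17-structure-1} and \ref{thm-Frankl-cover-number-3} for $k=3$) is applied; the size thresholds in $m(n,k,s)$ are calibrated exactly to rule out $H_0^k,\ldots,H_5^k$ by Fact~\ref{fact-size-Hi(n)}. The leftover case, where $\mathcal{H}_{s-1}$ is $EKR$-like but contains one stray edge, is handled by a separate Hilton--Milner-flavored shadow lemma for the family $HM(n,k,s,k)$ (Lemma~\ref{lemma-shadow-HM(n,k,s,t)-exact}), with a technical calculation (Claim~\ref{claim-y-ge-x+1}) showing that deleting the $O(n^{k-4})$ stray edges doesn't hurt. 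None of this apparatus --- the degree-capped vertex removal, the $\nu$-dropping structure theorem, the $H_i^k$ classification, or the $HM$-shadow lemma --- appears in your proposal, and the trace decomposition you suggest would not substitute for it without essentially reinventing these ideas.
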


The constraint $\nu(\mathcal{H})\le s$ above imposes the bound $|\mathcal{H}| = O(n^{k-1})$, so the point of Theorem~\ref{thm-EM(n,k,s)-n^k-2} is that it applies to $|\mathcal{H}| \ge (c(k,s)+o(1)){n \choose k-2}$ where $c(k,s)$ is obtained from (\ref{hlowerc});  this is a lower order of magnitude  than $n^{k-1}$. In fact, as we will show below, the order of magnitude $n^{k-2}$ is best possible for such a result and even the constant $c(k,s)$ is tight if $s=1$ or $k=3$,
 and is tight up to a constant factor for all other $(s,k)$.

%
%



Let $\mathcal{G} = EM(n,k,2s+1,2)$ and
$m = |\mathcal{G}| \sim \binom{2s+1}{2}\binom{n}{k-2}$ and
let $x \in \mathbb{R}$ such that $\binom{x}{k} -\binom{x-s}{k} = m$.
Since $\binom{x}{k} -\binom{x-s}{k} \sim s \binom{x}{k-1}$, $x = \Theta(n^{\frac{k-2}{k-1}})$.
Notice that
\begin{align}
s|\partial \mathcal{G}|- m
& = s \sum_{i=1}^{k-1}\binom{2s+1}{i}\binom{n-2s-1}{k-1-i} - \sum_{i=2}^{k}\binom{2s+1}{i}\binom{n-2s-1}{k-i} \notag\\
& = \Theta(n^{k-3}), \notag
\end{align}
and
\begin{align}
s|\partial L_{m}EM(n,k,s,1)|- m
& \ge s \binom{x}{k-1} -\left( \binom{x}{k} -\binom{x-s}{k}\right) \notag\\
& = \Theta(x^{k-2})= \Theta(n^{\frac{(k-2)^2}{k-1}}). \notag
\end{align}
Since $\frac{(k-2)^2}{k-1} > k-3$, $|\partial L_{m}EM(n,k,s,1)| > |\partial EM(n,k,2s+1,2)|$ for sufficiently large $n$.
Therefore, we obtain the following result.

\begin{fact}
For every $k \ge 3$ and sufficiently large $n$
there exists $\mathcal{G} \subset \binom{[n]}{k}$ with $\nu(\mathcal{G}) =s$ and $|\mathcal{G}| = (1+o_{n}(1))\binom{2s+1}{2}\binom{n}{k-2}$
such that $|\partial\mathcal{G}| < |\partial L_{|\mathcal{G}|}EM(n,k,s,1)|$.
\end{fact}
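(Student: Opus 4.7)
The plan is to exhibit $\mathcal{G} := EM(n,k,2s+1,2)$ as the witness and verify the inequality by an asymptotic expansion of both $|\partial \mathcal{G}|$ and $|\partial L_{|\mathcal{G}|}EM(n,k,s,1)|$, demonstrating that after subtracting the common leading term the residual on the $L_m$-side is of strictly larger order than the residual on the $\mathcal{G}$-side.

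The properties of $\mathcal{G}$ come out cleanly. Each edge meets $[2s+1]$ in at least two points, so any matching uses at least twice as many points of $[2s+1]$ as it has edges, forcing $\nu(\mathcal{G})\le s$; equality is witnessed by $\{\{2i-1,2i\}\cup S_i\}_{i=1}^{s}$ with pairwise disjoint $(k-2)$-sets $S_i \subset [2s+2,n]$. The size is $|\mathcal{G}| = \sum_{i=2}^{k}\binom{2s+1}{i}\binom{n-2s-1}{k-i} = (1+o(1))\binom{2s+1}{2}\binom{n}{k-2}$. A $(k-1)$-set lies in $\partial \mathcal{G}$ iff it meets $[2s+1]$ (any such $(k-1)$-set extends to a $k$-edge meeting $[2s+1]$ in at least two points, possibly by adding a second element of $[2s+1]$), hence $|\partial \mathcal{G}| = \sum_{i=1}^{k-1}\binom{2s+1}{i}\binom{n-2s-1}{k-1-i}$. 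The identity $s\binom{2s+1}{1} = 2\binom{2s+1}{2}$ makes the leading $\binom{n-2s-1}{k-2}$-terms of $s|\partial \mathcal{G}|$ and $|\mathcal{G}|$ cancel, so $s|\partial \mathcal{G}| - |\mathcal{G}| = \Theta(n^{k-3})$.

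For the $L_m$-side, set $m := |\mathcal{G}|$ and let $x \in \mathbb{R}$ satisfy $\binom{x}{k}-\binom{x-s}{k} = m$. Since the LHS is asymptotic to $s\binom{x}{k-1}$ and $m = \Theta(n^{k-2})$, we get $x = \Theta(n^{(k-2)/(k-1)})$. In colex order, $L_m EM(n,k,s,1)$ is (up to a boundary remainder of negligible order) exactly $EM(\lfloor x\rfloor,k,s,1)$, and its shadow contains all of $\binom{[\lfloor x\rfloor]}{k-1}$, because any $(k-1)$-subset of $[\lfloor x\rfloor]$ extends by an unused element of $[s]$ to an edge intersecting $[s]$. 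Thus
\[
s|\partial L_m EM(n,k,s,1)| - m \;\ge\; s\binom{x}{k-1} - \Big(\binom{x}{k}-\binom{x-s}{k}\Big) = \Theta(x^{k-2}) = \Theta\!\left(n^{(k-2)^2/(k-1)}\right).
\]

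Finally, since $(k-2)^2/(k-1) = (k-3) + 1/(k-1) > k-3$ for every $k \ge 3$, the $L_m$-residual strictly dominates the $\mathcal{G}$-residual for all large $n$, yielding $|\partial \mathcal{G}| < |\partial L_{|\mathcal{G}|}EM(n,k,s,1)|$. I expect the only real subtlety to be the cancellation of the leading $n^{k-2}$-terms on the $\mathcal{G}$-side: were that identity to fail, the two residuals would both be of order $n^{k-2}$ and the asymptotic comparison would no longer decide the sign.
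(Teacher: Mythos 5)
Your construction ($\mathcal{G}=EM(n,k,2s+1,2)$) and overall strategy (compare $s|\partial\mathcal{G}|-m$ with $s|\partial L_mEM(n,k,s,1)|-m$ and exploit $(k-2)^2/(k-1)>k-3$) are exactly the paper's. Two issues, one cosmetic, one substantive.

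The cosmetic one: the identity you invoke, $s\binom{2s+1}{1}=2\binom{2s+1}{2}$, is false; the correct cancellation is $s\binom{2s+1}{1}=\binom{2s+1}{2}$ (both equal $s(2s+1)$), since the leading coefficient of $m=|\mathcal{G}|$ is $\binom{2s+1}{2}$, not $2\binom{2s+1}{2}$. The conclusion $s|\partial\mathcal{G}|-m=\Theta(n^{k-3})$ still stands, but as written the stated identity is simply wrong.

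The substantive one is in the lower bound for $|\partial L_mEM(n,k,s,1)|$. You argue that $\partial EM(\lfloor x\rfloor,k,s,1)\supset\binom{[\lfloor x\rfloor]}{k-1}$, which yields $|\partial L_mEM(n,k,s,1)|\ge\binom{\lfloor x\rfloor}{k-1}$, but then you silently replace $\lfloor x\rfloor$ by the real $x$ when writing $s\binom{x}{k-1}-m=\Theta(x^{k-2})$. That replacement is not harmless here: $\binom{x}{k-1}-\binom{\lfloor x\rfloor}{k-1}$ can be as large as $\Theta(x^{k-2})$, which is exactly the order of the quantity you are trying to show is positive, and for $s=1$ the leading $\Theta(x^{k-2})$ terms in $s\binom{\lfloor x\rfloor}{k-1}-m$ actually cancel. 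So "boundary remainder of negligible order" is not a correct description; the remainder is of the decisive order and you must account for it. The paper avoids this by first proving a Kruskal--Katona-type inequality for initial colex segments of $EM(n,k,s,t)$ (its Lemmas~\ref{lemma-size-shadow-LmEM(n,k,s,t)}--\ref{lemma-shadow-k-s-t-simple}), from which $|\partial L_mEM(n,k,s,1)|\ge\binom{x}{k-1}$ with the real $x$ follows cleanly. Alternatively, you could repair your argument by decomposing $L_mEM(n,k,s,1)=EM(\lfloor x\rfloor,k,s,1)\cup(\mathcal{F}+\{\lfloor x\rfloor+1\})$ with $\mathcal{F}\subset EM(\lfloor x\rfloor,k-1,s,1)$, noting $|\partial\mathcal{F}|\ge|\mathcal{F}|/s$ (e.g.\ by Theorem~\ref{franklnu=s}) since $\nu(\mathcal{F})\le s$, and verifying that the resulting bound $\binom{\lfloor x\rfloor}{k-1}+|\partial\mathcal{F}|$ still leaves a $\Theta(x^{k-2})$ surplus over $m/s$; this works, but it is precisely the bookkeeping you waved away.
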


It would be interesting to determine the minimum value of $c(k,s)$ such that the conclusion in Theorem \ref{thm-EM(n,k,s)-n^k-2} holds for
all $|\mathcal{H}| > c(k,s)\binom{n}{k-2}$ and sufficiently large $n$.

\section{Proofs}
\subsection{Extension of the $k$-cascade representation}
In this section, we prove an extension of the well-known $k$-cascade representation of a number.
The $k$-cascade representation  plays an important role in the Kruskal-Katona theorem and the extension that we prove plays an analogous role for our theorems.
As a convention, let $\binom{a}{b} = 0$ if $b< 0$ or $a<b$, and let $\binom{a}{0} = 1$ for all $a \ge 0$.

For an $r$-graph $\mathcal{H}$ and a vertex set $S$ that is disjoint from $V(\mathcal{H})$ define
\begin{align}
\mathcal{H}+S = \left\{A \cup S: A \in \mathcal{H}  \right\}.   \notag
\end{align}
For every $i \in \mathbb{N}$ let $\widehat{i} = i+1$.

\begin{lemma}\label{lemma-decompose-LmEM(n,k,s,t}
Let $n \ge k \ge t \ge 0$ and $s \ge t \ge 0$. Then the following hold.
\begin{itemize}
\item[(a)] $|EM(n,k,s,t)| = \binom{n}{k} - \sum_{j=0}^{t-1}\binom{s}{j}\binom{n-s}{k-j}$.
\item[(b)] For every $1 \le m \le |EM(n,k,s,t)|$
there exist integers $a_k > a_{k-1}> \cdots > a_h \ge h \ge \max\{t,1\}$ such that
\begin{align}
L_{m}EM(n,k,s,t)= EM(a_{k},k,s,t) \cup \bigcup_{i=h}^{k-1}
\left( EM(a_{i},i,s,t)+\{\widehat{a}_{i+1}, \ldots, \widehat{a}_{k}\} \right)
\notag
\end{align}
\end{itemize}
\end{lemma}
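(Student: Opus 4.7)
The plan for (a) is a direct complement count: for each $j \in \{0,1,\ldots,t-1\}$, the number of $k$-sets of $[n]$ with exactly $j$ elements in $[s]$ equals $\binom{s}{j}\binom{n-s}{k-j}$, and subtracting the sum from $\binom{n}{k}$ yields the stated formula. This is a one-line verification.

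For part (b) I will use a greedy construction with induction on $k$, mirroring the classical proof of the $k$-cascade representation. Define $a_k$ to be the largest integer with $|EM(a_k,k,s,t)| \le m$; since $|EM(\cdot,k,s,t)|$ is nondecreasing and strictly increasing once positive, and since $m \le |EM(n,k,s,t)|$, this $a_k$ exists, and the implicit lower bound $a_k \ge k$ will follow at the end from the strictly decreasing chain of indices. The first step is to check that $L_{|EM(a_k,k,s,t)|}EM(n,k,s,t) = EM(a_k,k,s,t)$. This holds because in colex order sets with maximum element at most $a_k$ precede all sets with larger maximum, and the members of $EM(n,k,s,t)$ with maximum at most $a_k$ are precisely $EM(a_k,k,s,t)$.

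If $m = |EM(a_k,k,s,t)|$ we are done with $h = k$. Otherwise set $m_1 = m - |EM(a_k,k,s,t)|$; the next $m_1$ members of $EM(n,k,s,t)$ in colex all have maximum equal to $\widehat{a}_k$, and are of the form $A \cup \{\widehat{a}_k\}$ with $A \in \binom{[a_k]}{k-1}$ subject to $|A\cap[s]| \ge t$ when $\widehat{a}_k > s$, and $|A\cap[s]| \ge t-1$ when $\widehat{a}_k \le s$. In the latter case one has $a_k < s$, and, assuming the nontrivial regime $k > t$ (with $k = t$ handled as a degenerate base case reducing to the standard Kruskal--Katona cascade), both $EM(a_k,k-1,s,t)$ and $EM(a_k,k-1,s,t-1)$ coincide with $\binom{[a_k]}{k-1}$. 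Hence in either case the admissible $A$'s trace out an initial segment $L_{m_1}EM(a_k,k-1,s,t)$ in colex, and the induction hypothesis applies to decompose it.

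The main step requiring care is verifying the strict descent $a_{k-1} < a_k$ in the output of the induction. This follows from the maximality of $a_k$: one has $m_1 < |EM(a_k+1,k,s,t)| - |EM(a_k,k,s,t)|$, and the right-hand side can be shown to equal $|EM(a_k,k-1,s,t)|$ (again using the identification above when $a_k+1 \le s$), so $a_{k-1}$, which the induction defines as the largest integer with $|EM(a_{k-1},k-1,s,t)| \le m_1$, is strictly less than $a_k$. Concatenating the top term $EM(a_k,k,s,t)$ with the inductive decomposition of $L_{m_1}EM(a_k,k-1,s,t)$, each piece augmented by $\{\widehat{a}_k\}$, yields the claimed union, with the stopping index $h \ge \max\{t,1\}$ inherited from the base of the recursion.
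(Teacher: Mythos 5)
Your proof is correct and takes essentially the same route as the paper's: both peel off the top colex block $EM(a_k,k,s,t)$, observe that every remaining set contains $\widehat{a}_k$, identify the trace on $\widehat{a}_k$ with an initial colex segment of $EM(a_k,k-1,s,t)$, and recurse. The only difference is that you spell out the case split on whether $\widehat{a}_k\le s$ (showing $EM(a_k,k-1,s,t)=EM(a_k,k-1,s,t-1)=\binom{[a_k]}{k-1}$ there) and you verify the strict descent $a_{k-1}<a_k$ explicitly, both of which the paper leaves implicit.
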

\begin{proof}
$(a)$ is clear.
So let us consider $(b)$.

First, it follows from the definition that the colex order of $EM(n',k,s,t)$ is the initial segment of the colex on $EM(n,k,s,t)$ for all $n' < n$.
Let $\mathcal{F} = L_{m}EM(n,k,s,t)$.
Without loss of generality we may assume that $\mathcal{F} \neq EM(n',k,s,t)$ for all $n' \le n$
since otherwise we can let $h=k$ and $a_k = n'$ and we are done.
So there exists $a_k$ such that $EM(a_k,k,s,t) \subset \mathcal{F} \subset EM(a_{k}+1,k,s,t)$
and hence every set in $\mathcal{F} \setminus EM(a_k,k,s,t)$ contains $a_{k}+1$.
Therefore, $\mathcal{F} = EM(a_k,k,s,t) \cup \left(\mathcal{F}_{k}+\{\widehat{a}_k\}\right)$ for some $\mathcal{F}_{k} \subset EM(a_k,k-1,s,t)$.

Let $m' = |\mathcal{F}_{k}|$.
Then it follows from the definition of colex order that $\mathcal{F}_{k} = L_{m'}EM(a_{k},k-1,s,t)$.
So we can repeat the argument above to show that there exists $a_{k-1}$ such that
$\mathcal{F}_{k} = EM(a_{k-1},k-1,s,t) \cup \left(\mathcal{F}_{k-1}+\{\widehat{a}_{k-1}\}\right)$.
This means that
\begin{align}
\mathcal{F} = EM(a_k,k,s,t) \cup \left(EM(a_{k-1},k-1,s,t)+\{\widehat{a}_k\}\right) \cup \left(\mathcal{F}_{k-1}+\{\widehat{a}_k,\widehat{a}_{k-1}\}\right).\notag
\end{align}
Inductively, one will get a decomposition of $\mathcal{F}$ as in Lemma \ref{lemma-decompose-LmEM(n,k,s,t}.
\end{proof}

\begin{lemma}\label{lemma-k-cascade-resp}
Let $s \ge t \ge 0$ and $k \ge t$.
Then, for every integers $m\ge 1$,
there exists  a unique representation of $m$ in the form
\begin{align}
m = \sum_{i=h}^{k}\binom{a_i}{i} - \sum_{j=0}^{t-1} \binom{s}{j}\sum_{i=h}^{k}\binom{a_i-s}{i-j}, \notag
\end{align}
where $a_k > \cdots > a_h \ge h \ge \max\{t,1\}$ are integers.
\end{lemma}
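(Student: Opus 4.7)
The plan is to establish existence by invoking Lemma \ref{lemma-decompose-LmEM(n,k,s,t}, and to deduce uniqueness by the standard greedy argument used for the classical $k$-cascade representation, with Pascal's identity for binomial coefficients replaced by its $EM$-analogue.

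For existence, pick any $n$ with $|EM(n,k,s,t)| \geq m$; for instance any $n$ satisfying $\binom{n}{k}\geq m$ will do, since the subtracted terms in part $(a)$ are nonnegative. Then apply part $(b)$ of Lemma \ref{lemma-decompose-LmEM(n,k,s,t} to the initial segment $\mathcal{F} = L_m EM(n,k,s,t)$ to extract integers $a_k > a_{k-1} > \cdots > a_h \geq h \geq \max\{t,1\}$ together with the partition of $\mathcal{F}$ into the disjoint pieces $EM(a_k,k,s,t)$ and $EM(a_i,i,s,t) + \{\widehat{a}_{i+1}, \ldots, \widehat{a}_k\}$ for $h \leq i \leq k-1$. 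Summing the piece sizes gives $m = \sum_{i=h}^{k}|EM(a_i,i,s,t)|$, and substituting the closed-form expression from part $(a)$ produces the desired binomial representation.

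For uniqueness, the approach is to pin down $a_k$ first as the unique integer satisfying $|EM(a_k,k,s,t)| \leq m < |EM(a_k+1,k,s,t)|$, and then recurse on the residual $m - |EM(a_k,k,s,t)|$ to identify $a_{k-1}, \ldots, a_h$. The arithmetic tool driving this is the Pascal-type identity
\begin{align}
|EM(n,k,s,t)| = |EM(n-1,k,s,t)| + |EM(n-1,k-1,s,t)|, \notag
\end{align}
which is a one-line consequence of the closed form in part $(a)$. Given any valid candidate representation, strict descent forces $a_i \leq a_k - (k-i)$, and repeatedly applying the Pascal recursion yields the telescoping identity
\begin{align}
|EM(a_k,k-1,s,t)| = \sum_{i=h}^{k-1}|EM(a_k-k+i,\, i,\, s,\, t)| + |EM(a_k-k+h,\, h-1,\, s,\, t)|. \notag
\end{align}
A term-by-term comparison then gives $\sum_{i=h}^{k-1}|EM(a_i,i,s,t)| < |EM(a_k,k-1,s,t)|$, whence $m < |EM(a_k,k,s,t)| + |EM(a_k,k-1,s,t)| = |EM(a_k+1,k,s,t)|$, so $a_k$ is forced.

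The main obstacle I anticipate is the boundary case in which $h = \max\{t,1\}$ and the strict-descent inequalities $a_i \leq a_k - (k-i)$ are all tight, because in that configuration the remainder $|EM(a_k-k+h,\, h-1,\, s,\, t)|$ in the telescoping can collapse to zero and the desired strict inequality degrades to a weak one. This is the direct analogue of the boundary-term issue in the classical $k$-cascade argument, and I would resolve it in the same spirit: either push the telescope one step further using positivity of $|EM|$ in our parameter range, or observe that a tight-descent representation coincides with a shorter canonical representation starting with $a_k + 1$, so that the greedy normalization singles out a unique tuple.
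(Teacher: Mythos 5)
Your existence argument coincides with the paper's: both apply Lemma~\ref{lemma-decompose-LmEM(n,k,s,t} and read off the integers $a_i$ from the decomposition of $L_m EM(n,k,s,t)$. For uniqueness you take a genuinely different route. The paper compares two putative representations set-theoretically: assuming $a_k\ge b_k+1$, it builds the families $\mathcal{F}_a$ and $\mathcal{F}_b$, observes that every set in $\mathcal{F}_b$ has maximum element at most $b_k+1\le a_k$ so $\mathcal{F}_b\subset EM(a_k,k,s,t)\subset\mathcal{F}_a$, and asserts this inclusion is strict. You argue arithmetically, verifying the $EM$-Pascal identity $|EM(n,k,s,t)|=|EM(n-1,k,s,t)|+|EM(n-1,k-1,s,t)|$, telescoping it, and pinning down $a_k$ greedily. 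Your route mirrors the classical cascade argument and makes the inequalities explicit at the cost of more bookkeeping.

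The boundary case you flag is a genuine gap, and importantly the paper's own argument does not escape it either. When $t\ge 1$, $h=t$, and all descents are tight (that is, $a_i=a_k-(k-i)$ for $h\le i\le k$), the telescoping remainder $|EM(a_k-k+h,\,h-1,\,s,\,t)|$ is not merely small but identically zero, since $h-1<t$ places it in the range where $EM=\emptyset$ by the paper's convention. Then $m=|EM(a_k+1,k,s,t)|$ exactly, so the two tuples $(t,(a_t,\dots,a_k))$ and $(k,(a_k+1))$ both satisfy the stated constraints $a_k>\dots>a_h\ge h\ge\max\{t,1\}$. A concrete instance is $s=t=1$, $k=2$, $m=2$: both $\binom{3}{2}-\binom{2}{2}$ (with $h=2$) and $\binom{2}{2}+\binom{1}{1}-\binom{1}{2}-\binom{0}{1}$ (with $h=1$) equal $2$, and the corresponding families $\mathcal{F}_a$ and $\mathcal{F}_b$ are both $\{12,13\}$, so the paper's claimed strict inclusion also fails in exactly this situation. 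In particular your first proposed fix, pushing the telescope further using positivity of $|EM|$, cannot succeed because the residual is genuinely zero. Your second fix, normalizing a tight-descent representation to the shorter one beginning at $a_k+1$, is the right repair and corresponds precisely to the canonicity built into Lemma~\ref{lemma-decompose-LmEM(n,k,s,t}'s greedy construction, where each residual family is required to be a \emph{proper} initial segment of $EM(a_j,j-1,s,t)$. To make the uniqueness argument complete you should state this normalization explicitly; note that the downstream lemmas in the paper invoke only the existence half, so the applications are unaffected.
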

\begin{proof}
If $t=0$, then this is just the $k$-cascade representation of $m$.
So we may assume that $t\ge 1$.
Let $n\in \mathbb{N}$ be sufficiently large such that $m \le |EM(n,k,s,t)|$.
Then the existence of such a representation follows from Lemma \ref{lemma-decompose-LmEM(n,k,s,t} since
\begin{align}
m
= |L_{m}EM(n,k,s,t)|
= \sum_{i=h}^{k}|EM(a_{i},i,s,t)|
& = \sum_{i=h}^{k}\left(\binom{a_{i}}{i}-\sum_{j=0}^{t-1}\binom{s}{j}\binom{a_i-s}{i-j}\right) \notag\\
& = \sum_{i=h}^{k}\binom{a_i}{i} - \sum_{j=0}^{t-1} \binom{s}{j}\sum_{i=h}^{k}\binom{a_i-s}{i-j}. \notag
\end{align}

Next, we prove the uniqueness of such representation of $m$.
Suppose that there exists $a_k > \cdots > a_h \ge h \ge t$ and $b_k > \cdots > b_{h'} \ge h' \ge t$
such that
\begin{align}\label{equation-m-k-cascade}
\sum_{i=h}^{k}\binom{a_i}{i} - \sum_{j=0}^{t-1}\binom{s}{j}\sum_{i=h}^{k}\binom{a_i-s}{i-j}
= m
= \sum_{i=h'}^{k}\binom{b_i}{i} - \sum_{j=0}^{t-1} \binom{s}{j}\sum_{i=h'}^{k}\binom{b_i-s}{i-j}.
\end{align}
Without loss of generality we may assume that $a_k \neq b_k$ since otherwise we can consider
$m' = m -\left(\binom{a_k}{i} - \sum_{j=0}^{t-1}\binom{s}{j}\binom{a_k-s}{i-j}\right)$ instead.
Let
\begin{align}
\mathcal{F}_{a}
= EM(a_{k},k,s,t) \cup \bigcup_{i=h}^{k-1} \left(EM(a_{i},i,s,t)+\{\widehat{a}_{i+1}, \ldots, \widehat{a}_k\}\right) \notag
\end{align}
and
\begin{align}
\mathcal{F}_{b}
= EM(b_{k},k,s,t) \cup \bigcup_{i=h'}^{k-1} \left(EM(b_{i},i,s,t)+\{\widehat{b}_{i+1}, \ldots, \widehat{b}_k\}\right). \notag
\end{align}
Then
\begin{align}
|\mathcal{F}_a| =
\sum_{i=h}^{k}\binom{a_i}{i} - \sum_{j=0}^{t-1}\binom{s}{j}\sum_{i=h}^{k}\binom{a_i-s}{i-j}, \notag
\end{align}
and
\begin{align}
|\mathcal{F}_b| =
\sum_{i=h'}^{k}\binom{b_i}{i} - \sum_{j=0}^{t-1} \binom{s}{j}\sum_{i=h'}^{k}\binom{b_i-s}{i-j}. \notag
\end{align}
Without loss of generality
we may assume that $a_k \ge b_k +1$.
However, notice that in this case $\mathcal{F}_b$ is a proper subset of $\mathcal{F}_a$, since every set of $\mathcal{F}_b$ has maximum element at most $b_k+1 \le a_k$. This contradicts (\ref{equation-m-k-cascade}).
%
%
\end{proof}

\subsection{Shifting}
For every $A \in \mathcal{H}$ and $1\le i < j \le n$ define
\begin{align}
S_{ij}(A) =
\begin{cases}
(A\setminus\{j\}) \cup \{i\}, & \text{ if } j\in A, i\not\in A, \text{ and } (A\setminus\{j\}) \cup \{i\} \not\in \mathcal{H}, \\
A, & \text{ otherwise}.
\end{cases}\notag
\end{align}
Let $S_{ij}(\mathcal{H}) = \{S_{ij}(A): A\in \mathcal{H}\}$ and call $\mathcal{H}$ shifted if $\mathcal{H} = S_{ij}(\mathcal{H})$
for all $1\le i < j \le n$.

\begin{fact}[see \cite{FR87}] \label{fact}
The following statements hold for all $\mathcal{H} \subset \binom{[n]}{k}$ and all $1\le i <j \le n$ and all $1\le t,\ell\le k-1$.
\begin{itemize}
\item $|\mathcal{H}| = |S_{ij}(\mathcal{H})|$.
\item $\partial_{\ell}S_{ij}(\mathcal{H}) \subset S_{ij}(\partial_{\ell}\mathcal{H})$ and in particular,
        $|S_{ij}(\partial_{\ell}\mathcal{H})| \ge |\partial_{\ell}S_{ij}(\mathcal{H})|$
\item $\nu(S_{ij}(\mathcal{H})) \le \nu(\mathcal{H})$.
\item If $\mathcal{H}$ is $t$-intersecting, then $S_{ij}(\mathcal{H})$ is also $t$-intersecting.
\end{itemize}
\end{fact}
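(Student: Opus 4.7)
The plan is to verify each of the four items by a careful case analysis on whether a given set is fixed or replaced by the shift $S_{ij}$, repeatedly exploiting the structural dichotomy that a set $A\in\mathcal{H}$ is moved by $S_{ij}$ only when $j\in A$, $i\notin A$, and $A^{*}:=(A\setminus\{j\})\cup\{i\}\notin\mathcal{H}$, in which case $A$ is replaced by $A^{*}$. Equivalently, $A\in\mathcal{H}$ is fixed by $S_{ij}$ iff $j\notin A$, or $i\in A$, or $A^{*}\in\mathcal{H}$.

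For the first item I would check that $A\mapsto S_{ij}(A)$ is injective on $\mathcal{H}$: any collision between a fixed $A_1$ and a shifted $A_2$ would force $A_1=A_2^{*}\in\mathcal{H}$, contradicting the shift condition on $A_2$. For the shadow containment, take $B\in\partial_\ell S_{ij}(\mathcal{H})$ witnessed by $A'\in S_{ij}(\mathcal{H})$ with $B\subset A'$, and split into cases based on the membership of $i,j$ in $B$. When $j\notin B$ or $\{i,j\}\subset B$ one quickly produces a witness in $\mathcal{H}$ showing $B\in\partial_\ell \mathcal{H}$ and checks that $B$ is fixed by the shift applied to $\partial_\ell\mathcal{H}$. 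The delicate case is $j\in B$, $i\notin B$: the relation $j\in A'$ forces $A'$ to have been fixed (a shifted set in $S_{ij}(\mathcal{H})$ cannot contain $j$), so $A'\in\mathcal{H}$ and hence $B\in\partial_\ell\mathcal{H}$; to conclude $B\in S_{ij}(\partial_\ell\mathcal{H})$ one additionally needs $B^{*}=(B\setminus\{j\})\cup\{i\}\in\partial_\ell\mathcal{H}$, and here I would invoke the fixed-point characterization of $A'$, which yields either $i\in A'$ (so $B^{*}\subset A'$) or $(A'\setminus\{j\})\cup\{i\}\in\mathcal{H}$ (which contains $B^{*}$).

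For the matching-number inequality, given a matching $\{M_1,\dots,M_\nu\}\subset S_{ij}(\mathcal{H})$ I would try to lift it to a matching of the same size in $\mathcal{H}$ by replacing every shifted $M_r$ with its preimage $M_r^{*}=(M_r\setminus\{i\})\cup\{j\}$. The only obstruction to disjointness is that $j$, which was absent from every shifted $M_r$, may already appear in some other matching edge $M_s$; by pairwise disjointness this $M_s$ is unique, and since $j\in M_s$ and $M_s$ is fixed, the characterization above gives me either $i\in M_s$ (in which case no conflict arises once I handle $M_s$ appropriately) or $(M_s\setminus\{j\})\cup\{i\}\in\mathcal{H}$, which I can use as a local replacement for $M_s$ to restore disjointness. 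Finally, for the $t$-intersecting property I would do a four-way case check on whether each of $A,B\in S_{ij}(\mathcal{H})$ is fixed or shifted; the only case that threatens to lose an element from the intersection is when one, say $A$, is fixed with $j\in A$, $i\notin A$ and the other is shifted, and there the fixed-point condition on $A$ produces $A^{*}\in\mathcal{H}$, whose $t$-intersection with the preimage of the shifted set translates back into $|A\cap B|\ge t$.

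The main technical obstacle appears in the second item: verifying $B\in\partial_\ell S_{ij}(\mathcal{H})\Rightarrow B\in S_{ij}(\partial_\ell\mathcal{H})$ requires not only $B\in\partial_\ell\mathcal{H}$ but also that its shift $B^{*}$ lies in $\partial_\ell\mathcal{H}$, which forces one to leverage the precise fixed-point characterization of $S_{ij}$ rather than just the definition of the shadow. The matching argument has similar subtlety in the bookkeeping, but once one adopts the above dichotomy as the central lemma, each item becomes a short case analysis.
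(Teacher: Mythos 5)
Your overall approach is correct and essentially reconstructs the standard argument for these shifting properties; the paper does not prove Fact~\ref{fact} but simply cites Frankl's survey~\cite{FR87}, and your case analysis is the one that appears there. One imprecision is worth fixing in the second item. When $j\notin B$ and the witness $A'$ is a shifted image (so $i\in A'$, $j\notin A'$, and $A:=(A'\setminus\{i\})\cup\{j\}\in\mathcal{H}$), it is not always true that ``one quickly produces a witness in $\mathcal{H}$ showing $B\in\partial_\ell\mathcal{H}$.'' If $i\in B$ this can fail: with $\mathcal{H}=\{\{j,2,3\}\}$, $i=1$, $\ell=1$, $A'=\{1,2,3\}$, $B=\{1,2\}$, one has $B\in\partial S_{1j}(\mathcal{H})$ but $B\notin\partial\mathcal{H}$. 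The cure is the same device you already use in the delicate $j\in B$, $i\notin B$ case: the companion $(B\setminus\{i\})\cup\{j\}$ is contained in $A$, hence lies in $\partial_\ell\mathcal{H}$, and under $S_{ij}$ it is either moved onto $B$ (when $B\notin\partial_\ell\mathcal{H}$) or left fixed alongside $B$ (when $B\in\partial_\ell\mathcal{H}$, in which case $B$ is itself a fixed point since $i\in B$); either way $B\in S_{ij}(\partial_\ell\mathcal{H})$. With that correction all four items go through. You can also streamline the matching argument: since every shifted image contains $i$ and the $M_r$ are pairwise disjoint, at most one $M_r$ is a shifted image, and the unique $M_s$ containing $j$ (if any) necessarily omits $i$, so the branch $i\in M_s$ of your dichotomy never occurs and only the replacement $(M_s\setminus\{j\})\cup\{i\}$ is ever needed.
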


\subsection{Main Lemma}
Fact~\ref{fact} shows that it suffices to consider shifted families in all proofs in this paper. The main technical statement in this work is Lemma~\ref{lemma-shadow-k-s-t-exact} below which is a generalization of the Kruskal-Katona theorem. For two families $\mathcal{H}_1$ and $\mathcal{H}_2$ we write $\mathcal{H}_1 \subset \mathcal{H}_2$
if $\mathcal{H}_1$ is isomorphic to a subgraph of $\mathcal{H}_2$.

Given a family $\mathcal{H}$, let $\mathcal{H}(1) = \{A\setminus \{1\}: 1\in A\in \mathcal{H}\}$
and $\mathcal{H}(\bar{1}) = \{A \in \mathcal{H}: 1\not\in A\}$.
It is easy to see that if $\mathcal{H}$ is shifted, then $\partial\mathcal{H}(\bar{1}) \subset \mathcal{H}(1)$
and hence $|\partial\mathcal{H}| = |\mathcal{H}(1)|+|\partial\mathcal{H}(1)|$.

\begin{lemma}\label{lemma-size-shadow-LmEM(n,k,s,t)}
Let $n \ge k \ge t \ge 0$ and $s \ge t \ge 0$.
Suppose that
\begin{align}
m =  \sum_{i=h}^{k}\binom{a_i}{i} - \sum_{j=0}^{t-1} \binom{s}{j}\sum_{i=h}^{k}\binom{a_i-s}{i-j} \notag
\end{align}
for integers $a_k > \cdots > a_{h} \ge \max\{t,1\}$.
Then for $1 \le \ell \le k-1$
\begin{align}
|\partial_{\ell} L_{m}EM(n,k,s,t)|
=  \sum_{i=h}^{k}\binom{a_i}{i-\ell} - \sum_{j=0}^{t-1-\ell} \binom{s}{j}\sum_{i=h}^{k}\binom{a_i-s}{i-\ell-j}.\notag
\end{align}
\end{lemma}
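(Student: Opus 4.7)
The plan is to induct on the cascade depth $k - h$, combining the recursive decomposition from Lemma~\ref{lemma-decompose-LmEM(n,k,s,t}(b) with the key identity
$$\partial_{\ell} EM(a, k, s, t) = EM(a, k-\ell, s, \max\{t-\ell, 0\}) \qquad (a \ge k).$$
The inclusion $\subseteq$ is immediate, since removing $\ell$ points from a set with at least $t$ points in $[s]$ leaves at least $t-\ell$ in $[s]$. For $\supseteq$, I would extend any $(k-\ell)$-set $B$ with $|B \cap [s]| \ge \max\{t-\ell, 0\}$ to a member of $EM(a, k, s, t)$ by first adjoining $\max\{0, t - |B \cap [s]|\}$ points from $[s] \setminus B$ (feasible since $s \ge t$) and then filling the remaining slots from $[a] \setminus ([s] \cup B)$; a short count shows $a \ge k$ suffices, and when $a < s$ both sides collapse to $\binom{[a]}{k-\ell}$ trivially.

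For the base case $h = k$, one has $L_m EM(n, k, s, t) = EM(a_k, k, s, t)$, so the identity together with Lemma~\ref{lemma-decompose-LmEM(n,k,s,t}(a) yields exactly the claimed formula (the sum over $j$ being empty when $t - \ell - 1 < 0$ under the convention $\binom{a}{b} = 0$ for $b < 0$). For the inductive step, Lemma~\ref{lemma-decompose-LmEM(n,k,s,t}(b) gives
$$L_m EM(n, k, s, t) = EM(a_k, k, s, t) \cup (\mathcal{F}' + \{\widehat{a}_k\}),$$
where $\mathcal{F}' = L_{m'} EM(a_k, k-1, s, t)$ with $m' = m - |EM(a_k, k, s, t)|$ has cascade $(a_{k-1}, \ldots, a_h)$ of depth one smaller. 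Splitting each $(k-\ell)$-subset of $A \cup \{\widehat{a}_k\}$ according to whether it contains $\widehat{a}_k$ yields
$$\partial_\ell L_m EM(n, k, s, t) = \partial_\ell EM(a_k, k, s, t) \cup \partial_{\ell-1} \mathcal{F}' \cup (\partial_\ell \mathcal{F}' + \{\widehat{a}_k\}).$$

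Two applications of the key identity show that $\partial_{\ell-1} \mathcal{F}' \subseteq EM(a_k, k-\ell, s, \max\{t-\ell+1, 0\}) \subseteq EM(a_k, k-\ell, s, \max\{t-\ell, 0\}) = \partial_\ell EM(a_k, k, s, t)$, so the middle piece is absorbed by the first. The first and third pieces are disjoint, since the first consists of subsets of $[a_k]$ while the third contains $\widehat{a}_k = a_k + 1$. Adding $|EM(a_k, k-\ell, s, \max\{t-\ell, 0\})|$ from Lemma~\ref{lemma-decompose-LmEM(n,k,s,t}(a) to $|\partial_\ell \mathcal{F}'|$ supplied by the induction hypothesis then yields the target formula after aligning the summation ranges (again using $\binom{a}{b} = 0$ for $b < 0$). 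The main obstacle is establishing the key identity uniformly across the boundary regimes ($a$ close to $k$, $a < s$, $\ell \ge t$, $t = 0$); once that is in hand, the remainder of the argument is a routine induction with mild bookkeeping on cascade terms.
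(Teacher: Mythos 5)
Your proof is correct and takes essentially the same approach as the paper: cascade decomposition via Lemma~\ref{lemma-decompose-LmEM(n,k,s,t}, the structural identity for $\partial_{\ell} EM(a,i,s,t)$, and absorption of the cross-term contributions into the shadow of the leading cascade piece. The only organizational difference is that the paper computes $\partial L_{m}EM(n,k,s,t)$ once by handling all cascade terms simultaneously and then iterates $\ell$ times, whereas you split off only the top cascade term, treat $\partial_{\ell}$ directly, and induct on cascade depth; the resulting structural description and the final count coincide.
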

\begin{proof}
Fix $1 \le \ell \le k-1$. By Lemma \ref{lemma-decompose-LmEM(n,k,s,t},
\begin{align}
L_{m}EM(n,k,s,t)= EM(a_{k},k,s,t) \cup \bigcup_{i=h}^{k-1}
\left(EM(a_{i},i,s,t)+\{\widehat{a}_{i+1}, \ldots, \widehat{a}_{k}\}\right).
\notag
\end{align}
Notice that
\begin{align}
\partial  EM(a_k,k,s,t)
= EM(a_k,k-1,s,t-1),
\notag
\end{align}
and for every $h \le i \le k-1$ we have
\begin{align}
\partial \left(EM(a_i,i,s,t) + \{\widehat{a}_{i+1},\ldots,\widehat{a}_{k}\}\right)
=  \left(EM(a_i,i-1,s,t-1) + \{\widehat{a}_{i+1},\ldots,\widehat{a}_{k}\}\right) \cup  \notag\\
\bigcup_{j=i+1}^{k} \left(EM(a_i,i,s,t) + \{\widehat{a}_{i+1},\ldots,\widehat{a}_{k}\}\setminus \{\widehat{a}_{j}\}\right).
\notag
\end{align}
On the other hand, for all $h\le i<j \le k-2$ since $a_{j} > a_{i}$ ,
\begin{align}
EM(a_i,i,s,t) + \{\widehat{a}_{i+1},\ldots,\widehat{a}_{k}\}\setminus \{\widehat{a}_{j}\}
\subset EM(a_j,j-1,s,t-1) + \{\widehat{a}_{j+1},\ldots,\widehat{a}_{k}\}.
\notag
\end{align}
For all $h \le i \le k-1$ since $a_{k} > a_{i}$,
\begin{align}
EM(a_i,i,s,t) + \{\widehat{a}_{i+1},\ldots,\widehat{a}_{k}\}\setminus \{\widehat{a}_{k}\}
\subset EM(a_k,k-1,s,t-1).
\notag
\end{align}
Therefore,
\begin{align}
\partial L_{m}EM(n,k,s,t)
= \bigcup_{i=h}^{k} \left(EM(a_{i},i-1,s,t-1)+\{\widehat{a}_{i+1},\ldots,\widehat{a}_{k}\}\right), \notag
\end{align}
and inductively we obtain for all $1 \le \ell \le k-1$
\begin{align}
\partial_{\ell} L_{m}EM(n,k,s,t)
= \bigcup_{i=h}^{k} \left(EM(a_{i},i-\ell,s,t-\ell)+\{\widehat{a}_{i+1},\ldots,\widehat{a}_{k}\}\right). \notag
\end{align}
Therefore,
\begin{align}
|\partial_{\ell} L_{m}EM(n,k,s,t)|
&= \sum_{i=h}^{k} |EM(a_i,i-\ell,s,t-\ell)| \notag \\
& =  \sum_{i=h}^{k}\left(\binom{a_i}{i-\ell}-\sum_{j=0}^{t-1-\ell}\binom{s}{j}\binom{a_i}{i-\ell-j}\right) \notag\\
& = \sum_{i=h}^{k}\binom{a_i}{i-\ell} - \sum_{j=0}^{t-1-\ell} \binom{s}{j}\sum_{i=h}^{k}\binom{a_i-s}{i-\ell-j}. \notag
\end{align}
\end{proof}

\begin{lemma}\label{lemma-shadow-k-s-t-exact}
Let $s\ge t \ge 0$.
If $\mathcal{H} \subset EM(n,k,s,t)$ and $|\mathcal{H}| = m$, then
$$|\partial\mathcal{H}|\ge |\partial L_{m}EM(n,k,s,t)|.$$
\end{lemma}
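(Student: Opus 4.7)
The plan is to prove Lemma~\ref{lemma-shadow-k-s-t-exact} by the standard combination of shifting and induction, adapted to the relativized setting $EM(n,k,s,t)$.

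First, by Fact~\ref{fact}, we may assume $\mathcal{H}$ is shifted, since the shifts $S_{ij}$ preserve $|\mathcal{H}|$, weakly decrease $|\partial\mathcal{H}|$, and, because they only replace larger elements with smaller ones, weakly increase $|A\cap[s]|$ and hence preserve $\mathcal{H}\subset EM(n,k,s,t)$. I would then induct on $k$, with base case $t=0$ reducing to the Kruskal-Katona theorem (Theorem~\ref{thm-Kruskal-Katona}) since $EM(n,k,s,0)=\binom{[n]}{k}$; degenerate cases such as $k=t$ (where $EM(n,k,s,k)=\binom{[s]}{k}$) can be handled in the same way.

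For the inductive step, split $\mathcal{H}$ via the element $1$. After relabeling $[2,n]$ as $[n-1]$, the families $\mathcal{H}(1)\subset EM(n-1,k-1,s-1,t-1)$ and $\mathcal{H}(\bar{1})\subset EM(n-1,k,s-1,t)$ are both shifted, and the remark preceding the statement gives $|\partial\mathcal{H}|=|\mathcal{H}(1)|+|\partial\mathcal{H}(1)|$. The inductive hypothesis applied to $\mathcal{H}(1)$ and $\mathcal{H}(\bar{1})$ yields the bounds $|\partial\mathcal{H}(1)|\ge|\partial L_{|\mathcal{H}(1)|}EM(n-1,k-1,s-1,t-1)|$ and $|\partial\mathcal{H}(\bar{1})|\ge|\partial L_{|\mathcal{H}(\bar{1})|}EM(n-1,k,s-1,t)|$. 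Meanwhile, Lemma~\ref{lemma-decompose-LmEM(n,k,s,t} identifies $\mathcal{L}=L_m EM(n,k,s,t)$ with $\mathcal{L}(1)=L_{m_1}EM(n-1,k-1,s-1,t-1)$ and $\mathcal{L}(\bar{1})=L_{m_2}EM(n-1,k,s-1,t)$ for some $m_1+m_2=m$, so $|\partial\mathcal{L}|=m_1+|\partial L_{m_1}EM(n-1,k-1,s-1,t-1)|$. Since the function $F(x):=x+|\partial L_x EM(n-1,k-1,s-1,t-1)|$ is strictly increasing (adding one more set to an initial colex segment raises the size by one while never decreasing the shadow), the lemma reduces to proving $|\mathcal{H}(1)|\ge m_1$, or equivalently $|\mathcal{H}(\bar{1})|\le m_2$.

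The main obstacle is this last inequality. From $\partial\mathcal{H}(\bar{1})\subseteq\mathcal{H}(1)$ (shiftedness) and $|\partial\mathcal{H}(\bar{1})|\ge|\partial L_{|\mathcal{H}(\bar{1})|}EM(n-1,k,s-1,t)|$ (induction), together with $|\mathcal{H}(1)|+|\mathcal{H}(\bar{1})|=m$, we obtain the constraint $|\mathcal{H}(\bar{1})|+|\partial L_{|\mathcal{H}(\bar{1})|}EM(n-1,k,s-1,t)|\le m$. The function $y\mapsto y+|\partial L_y EM(n-1,k,s-1,t)|$ is also strictly increasing, so it suffices to check that $m_2$ is the largest $y$ satisfying this constraint, i.e., that $(m_2+1)+|\partial L_{m_2+1}EM(n-1,k,s-1,t)|>m$. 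My plan is to verify this by direct computation using Lemma~\ref{lemma-size-shadow-LmEM(n,k,s,t)} and the cascade representation from Lemma~\ref{lemma-k-cascade-resp}: incrementing $m_2$ forces the next set in the colex order to contribute shadow elements associated to a bumped cascade parameter, and the resulting excess shadow exceeds $m_1$. The calculation should split into cases depending on whether the smallest cascade index $h$ satisfies $a_h>h$ (so one can simply increment $a_h$) or $a_h=h$ (which requires rolling over to a higher term of the cascade), and carrying through this case analysis rigorously is the technical heart of the argument.
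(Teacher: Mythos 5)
Your approach matches the paper's: after shifting, split $\mathcal{H}$ at the element $1$, apply the inductive hypothesis to $\mathcal{H}(1)\subset EM(n-1,k-1,s-1,t-1)$ and to $\mathcal{H}(\bar{1})\subset EM(n-1,k,s-1,t)$, and use $\partial\mathcal{H}(\bar{1})\subset\mathcal{H}(1)$ to pin down the split. The paper carries out exactly this reduction in explicit cascade form: it writes $m_1$ and $m_2$ via Lemma~\ref{lemma-k-cascade-resp}, and Claim~\ref{claim-H(1)-k-s-t} is precisely your target $|\mathcal{H}(1)|\ge m_1$. Your monotonicity framing (the map $x\mapsto x+|\partial L_x EM|$ is strictly increasing, so the shiftedness constraint pins $|\mathcal{H}(\bar{1})|\le m_2$) is more abstract than the paper's explicit arithmetic, but it is the same argument in contrapositive form.

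A few cautions. First, you phrase the induction as ``on $k$,'' but the step for $\mathcal{H}(\bar{1})$ keeps $k$ fixed and only decreases $s$; the paper inducts on $(k,s,t)$ and you need that too. Second, you are right to flag the final verification $(m_2+1)+|\partial L_{m_2+1}EM(n-1,k,s-1,t)|>m$ as nontrivial: the pseudo-cascade $(a_i-1)$ for $m_2$ need not be canonical (when $a_h=h$ the bottom term degenerates), so $|\partial L_{m_2}EM(n-1,k,s-1,t)|$ can be strictly less than $m_1$, and the increment to $m_2+1$ can genuinely roll over the cascade. The paper compresses this into one ``by the induction hypothesis'' line in Claim~\ref{claim-H(1)-k-s-t}, implicitly invoking monotonicity of the shadow formula under cascade domination; as written the chained strict inequality there should really be $|\partial\mathcal{H}(\bar{1})|\ge m_1 > |\mathcal{H}(1)|$, which still gives the contradiction. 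Note also that what you actually need is only $|\partial L_{m_2+1}EM(n-1,k,s-1,t)|\ge m_1$ (the extra $+1$ in $m_2+1$ already supplies the strictness), so you do not need the shadow to jump a lot when crossing $m_2$; your planned case analysis on whether $a_h>h$ or $a_h=h$ is the right place to establish this, and it is the same gap the paper elides.
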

\begin{proof}
By Lemma \ref{lemma-k-cascade-resp}, there exists $a_k > \cdots > a_{h} \ge \max\{t,1\}$
such that
\begin{align}
m =  \sum_{i=h}^{k}\binom{a_i}{i} - \sum_{j=0}^{t-1} \binom{s}{j}\sum_{i=h}^{k}\binom{a_i-s}{i-j}. \notag
\end{align}
Then, by Lemma \ref{lemma-size-shadow-LmEM(n,k,s,t)} it suffices to show that
\begin{align}
|\partial\mathcal{H}| \ge \sum_{i=h}^{k}\binom{a_i}{i-1} - \sum_{j=0}^{t-2}\binom{s}{j}\sum_{i=h}^{k}\binom{a_i-s}{i-1-j}. \notag
\end{align}
We prove this statement by induction on $k,s,t$.
When $s=0$ or $k=1$ the statement is trivially true.
When $t=0$ the statement follows from the Kruskal-Katona theorem.
So we may assume that $s\ge t \ge 1$ and $k\ge 2$.

\begin{claim}\label{claim-H(1)-k-s-t}
$|\mathcal{H}(1)| \ge \sum_{i=h}^{k}\binom{a_i-1}{i-1} - \sum_{j=0}^{t-2}\binom{s-1}{j}\sum_{i=h}^{k}\binom{a_i-s}{i-1-j}$.
\end{claim}
\begin{proof}[Proof of Claim \ref{claim-H(1)-k-s-t}]
Suppose not.
Then
\begin{align}
|\mathcal{H}(\bar{1})|
& = |\mathcal{H}| - |\mathcal{H}(1)| \notag\\
& > \sum_{i=h}^{k}\binom{a_i}{i} - \sum_{j=0}^{t-1}\binom{s}{j}\sum_{i=h}^{k}\binom{a_i-s}{i-j}
    -\left( \sum_{i=h}^{k}\binom{a_i-1}{i-1} - \sum_{j=0}^{t-2}\binom{s-1}{j}\sum_{i=h}^{k}\binom{a_i-s}{i-1-j} \right) \notag\\
& = \sum_{i=h}^{k}\binom{a_i}{i} - \sum_{i=h}^{k}\binom{a_i-1}{i-1}
    - \sum_{j=0}^{t-1}\left(\binom{s}{j}-\binom{s-1}{j-1}\right)\sum_{i=h}^{k}\binom{a_i-s}{i-j} \notag\\
& = \sum_{i=h}^{k}\binom{a_i-1}{i} - \sum_{j=0}^{t-1}\binom{s-1}{j}\sum_{i=h}^{k}\binom{a_i-s}{i-j}.  \notag
\end{align}
Since $\mathcal{H}(\bar{1}) \subset EM(n,k,s-1,t)$, by the induction hypothesis
\begin{align}
|\partial\mathcal{H}(\bar{1})|
>  \sum_{i=h}^{k}\binom{a_i-1}{i-1} - \sum_{j=0}^{t-2} \binom{s-1}{j}\sum_{i=h}^{k}\binom{a_i-s}{i-1-j}
> |\mathcal{H}(1)|,  \notag
\end{align}
which contradicts the assumption that $\mathcal{H}$ is shifted.
\end{proof}

Since $\mathcal{H}(1) \subset EM(n,k-1,s-1,t-1)$, by the induction hypothesis and Claim \ref{claim-H(1)-k-s-t},
\begin{align}
|\partial\mathcal{H}|
&  \ge |\mathcal{H}(1)| + |\partial\mathcal{H}(1)| \notag\\
& \ge  \sum_{i=h}^{k}\binom{a_i-1}{i-1} - \sum_{j=0}^{t-2}\binom{s-1}{j}\sum_{i=h}^{k}\binom{a_i-s}{i-1-j}  \notag\\
& \quad   + \sum_{i=h}^{k}\binom{a_i-1}{i-2} - \sum_{j=0}^{t-3}\binom{s-1}{j}\sum_{i=h}^{k}\binom{a_i-s}{i-2-j}  \notag \\
& = \sum_{i=h}^{k}\binom{a_i}{i-1} - \sum_{j=0}^{t-2}\left(\binom{s-1}{j}+\binom{s-1}{j-1}\right)\sum_{i=h}^{k}\binom{a_i-s}{i-1-j} \notag\\
& = \sum_{i=h}^{k}\binom{a_i}{i-1} - \sum_{j=0}^{t-2} \binom{s}{j}\sum_{i=h}^{k}\binom{a_i-s}{i-1-j}. \notag
\end{align}
This completes the proof of Lemma \ref{lemma-shadow-k-s-t-exact}.
\end{proof}

\begin{coro}\label{coro-ell-shadow-k-s-t-exact}
Let $s\ge t \ge 0$ and $1\le \ell \le k-1$.
Suppose that $\mathcal{H} \subset EM(n,k,s,t)$ and $|\mathcal{H}| = m$.
Then $|\partial_{\ell}\mathcal{H}|\ge |\partial_{\ell} L_{m}EM(n,k,s,t)|$.
\end{coro}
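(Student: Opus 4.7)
The plan is to proceed by induction on $\ell$, with the base case $\ell=1$ being exactly Lemma~\ref{lemma-shadow-k-s-t-exact}. The key structural fact that makes the induction go through is that if $\mathcal{H}\subset EM(n,k,s,t)$, then $\partial\mathcal{H}\subset EM(n,k-1,s,\max\{t-1,0\})$: removing any single element from a $k$-set $A$ with $|A\cap[s]|\ge t$ leaves a $(k-1)$-set meeting $[s]$ in at least $t-1$ elements. So shadows of subfamilies of $EM(n,k,s,t)$ live naturally in the corresponding $EM$-family with $k$ and $t$ each decreased by one.

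For the induction step, I would set $\mathcal{H}_1=\partial\mathcal{H}$ and $m_1=|\mathcal{H}_1|$, then apply the inductive hypothesis at level $\ell-1$ to $\mathcal{H}_1\subset EM(n,k-1,s,t-1)$ to obtain
\begin{align*}
|\partial_\ell\mathcal{H}|=|\partial_{\ell-1}\mathcal{H}_1|\ \ge\ |\partial_{\ell-1}L_{m_1}EM(n,k-1,s,t-1)|.
\end{align*}
A single application of Lemma~\ref{lemma-shadow-k-s-t-exact} to $\mathcal{H}$ gives $m_1\ge m_2:=|\partial L_{m}EM(n,k,s,t)|$. Because $L_{m_1}EM(n,k-1,s,t-1)\supseteq L_{m_2}EM(n,k-1,s,t-1)$ as sets and the $(\ell-1)$-shadow is monotone under inclusion, this yields
\begin{align*}
|\partial_{\ell-1}L_{m_1}EM(n,k-1,s,t-1)|\ \ge\ |\partial_{\ell-1}L_{m_2}EM(n,k-1,s,t-1)|.
\end{align*}

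The final step is the identification $\partial L_{m}EM(n,k,s,t)=L_{m_2}EM(n,k-1,s,t-1)$ as sets, after which iterating one more level of shadow turns the right-hand side into $|\partial_\ell L_{m}EM(n,k,s,t)|$. This identification can be read directly off the explicit decomposition obtained in the proof of Lemma~\ref{lemma-size-shadow-LmEM(n,k,s,t)}: the union $\bigcup_{i=h}^{k}\bigl(EM(a_i,i-1,s,t-1)+\{\widehat{a}_{i+1},\ldots,\widehat{a}_k\}\bigr)$ is exactly the canonical colex-initial decomposition from Lemma~\ref{lemma-decompose-LmEM(n,k,s,t} in rank $k-1$, with the same cascade parameters $a_k>\cdots>a_h$. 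Composing these three inequalities with this identification closes the induction.

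The step I expect to require the most care is the index bookkeeping at boundary values: when $t-\ell+1$ drops to $0$ or below one must interpret $EM(n,k',s,t')$ as $\binom{[n]}{k'}$ and fall back on the original Kruskal--Katona theorem, and when lower-cascade terms with $i-\ell<\max\{t-\ell,1\}$ degenerate one must check that both sides of the identification drop the same summands. Neither is conceptually hard, but both need to be verified to guarantee the colex-initial segments match consistently across iterations.
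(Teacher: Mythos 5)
Your proposal is essentially the paper's proof, with the one-step peeling applied in the opposite order. The paper writes $\partial_\ell\mathcal{H}=\partial(\partial_{\ell-1}\mathcal{H})$, applies the inductive hypothesis to get a lower bound on $|\partial_{\ell-1}\mathcal{H}|$, and then invokes Lemma~\ref{lemma-shadow-k-s-t-exact} on $\partial_{\ell-1}\mathcal{H}\subset EM(n,k-\ell+1,s,t-\ell+1)$; you write $\partial_\ell\mathcal{H}=\partial_{\ell-1}(\partial\mathcal{H})$, apply Lemma~\ref{lemma-shadow-k-s-t-exact} to $\mathcal{H}$ first, and then invoke the inductive hypothesis on $\partial\mathcal{H}\subset EM(n,k-1,s,t-1)$. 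Both orderings pass through the same three facts: the containment $\partial EM(n,k,s,t)\subset EM(n,k-1,s,\max\{t-1,0\})$, monotonicity of $m\mapsto|\partial_j L_m EM(\cdot)|$, and the compatibility of $\partial$ with colex-initial segments of the $EM$-families. The paper encodes that third fact implicitly by working entirely at the level of the cascade cardinality formulas of Lemma~\ref{lemma-size-shadow-LmEM(n,k,s,t)}, whereas you state the set-level identification $\partial L_m EM(n,k,s,t)=L_{m_2}EM(n,k-1,s,t-1)$ explicitly. You are right that this is the one place requiring care: the shadow decomposition $\bigcup_{i=h}^{k}\bigl(EM(a_i,i-1,s,t-1)+\{\widehat{a}_{i+1},\ldots,\widehat{a}_k\}\bigr)$ has shifted parameter $h'=h-1$, which can drop below $\max\{t-1,1\}$ when $h=1$ and $t\le 1$, so the displayed union is not literally the canonical cascade decomposition of Lemma~\ref{lemma-decompose-LmEM(n,k,s,t}; one must check the resulting family is nonetheless a colex-initial segment (after absorbing the degenerate rank-$0$ block). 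This is not hard, but it is not quite "read directly off" either, and the paper faces the analogous issue in its own bookkeeping. Overall your proposal is correct and mirrors the paper's argument.
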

\begin{proof}
Similar to the proof of Lemma \ref{lemma-shadow-k-s-t-exact},
it suffices to show that if for some integers $a_k> \cdots > a_h\ge h\ge \max\{t,1\}$
\begin{align}
|\mathcal{H}| = \sum_{i=h}^{k}\binom{a_i}{i} - \sum_{j=0}^{t-1} \binom{s}{j}\sum_{i=h}^{k}\binom{a_i-s}{i-j}, \notag
\end{align}
then
\begin{align}
|\partial_{\ell}\mathcal{H}|
\ge \sum_{i=h}^{k}\binom{a_i}{i-\ell} - \sum_{j=0}^{t-1-\ell} \binom{s}{j}\sum_{i=h}^{k}\binom{a_i-s}{i-\ell-j}. \notag
\end{align}

We proceed by induction on $\ell$.
When $\ell = 1$, this is Lemma \ref{lemma-shadow-k-s-t-exact}.
So we may assume that $\ell \ge 2$.
By the induction hypothesis
\begin{align}
|\partial_{\ell-1}\mathcal{H}|
\ge \sum_{i=h}^{k}\binom{a_i}{i-\ell+1} - \sum_{j=0}^{t-\ell}\binom{s}{j}\sum_{i=h}^{k}\binom{a_i-s}{i-\ell+1-j}. \notag
\end{align}
Since $\partial_{\ell-1}\mathcal{H} \subset EM(n,k,s,t-\ell+1)$,
by Lemma \ref{lemma-shadow-k-s-t-exact},
\begin{align}
|\partial_{\ell}\mathcal{H}| = |\partial \partial_{\ell-1}\mathcal{H}|
\ge \sum_{i=h}^{k}\binom{a_i}{i-\ell} - \sum_{j=0}^{t-\ell-1}\binom{s}{j}\sum_{i=h}^{k}\binom{a_i-s}{i-\ell-j}. \notag
\end{align}
This completes the proof of Corollary \ref{coro-ell-shadow-k-s-t-exact}.
\end{proof}

The same induction argument as above gives the following technically simpler version of
Corollary \ref{coro-ell-shadow-k-s-t-exact}.

\begin{lemma}[Simplified version of Lemma \ref{lemma-shadow-k-s-t-exact}]\label{lemma-shadow-k-s-t-simple}
Let $s\ge t \ge 0$ and $1\le \ell \le k-1$.
Suppose that $\mathcal{H} \subset EM(n,k,s,t)$ and
$|\mathcal{H}| = \binom{x}{k} - \sum_{j=0}^{t-1}\binom{s}{j}\binom{x-s}{k-j}$ for some $x\in \mathbb{R}$.
Then $|\partial_{\ell} \mathcal{H}|\ge \binom{x}{k-\ell} - \sum_{j=0}^{t-1-\ell}\binom{s}{j}\binom{x-s}{k-\ell-j}$.
\end{lemma}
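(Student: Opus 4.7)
The plan is to carry out the same induction as in Lemma \ref{lemma-shadow-k-s-t-exact} and Corollary \ref{coro-ell-shadow-k-s-t-exact}, replacing the $k$-cascade expression $\sum_{i=h}^{k}\binom{a_i}{i} - \sum_{j=0}^{t-1}\binom{s}{j}\sum_{i=h}^{k}\binom{a_i-s}{i-j}$ throughout by the real-polynomial expression $\binom{x}{k} - \sum_{j=0}^{t-1}\binom{s}{j}\binom{x-s}{k-j}$. The Pascal identities $\binom{x}{k} = \binom{x-1}{k}+\binom{x-1}{k-1}$, $\binom{s}{j} = \binom{s-1}{j}+\binom{s-1}{j-1}$, and the consequence $\binom{s}{j}-\binom{s-1}{j-1} = \binom{s-1}{j}$ remain valid for real $x$, which is the sole algebraic input needed to make the argument go through verbatim.

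First I would handle $\ell=1$ by induction on the triple $(k,s,t)$. The base cases $s=0$ and $k=1$ are trivial, and $t=0$ reduces to the standard real-valued form of the Kruskal--Katona theorem: $|\mathcal{H}|=\binom{x}{k}$ implies $|\partial \mathcal{H}|\ge \binom{x}{k-1}$. For the inductive step with $s\ge t\ge 1$ and $k\ge 2$, I would apply Fact \ref{fact} to assume $\mathcal{H}$ is shifted so that $|\partial \mathcal{H}|=|\mathcal{H}(1)|+|\partial \mathcal{H}(1)|$. The analog of Claim \ref{claim-H(1)-k-s-t} becomes
\[
|\mathcal{H}(1)| \ge \binom{x-1}{k-1} - \sum_{j=0}^{t-2}\binom{s-1}{j}\binom{x-s}{k-1-j},
\]
and is proved by contradiction exactly as before: if the bound failed, Pascal applied to $\binom{x}{k}$ together with $\binom{s}{j}-\binom{s-1}{j-1}=\binom{s-1}{j}$ would give
\[
|\mathcal{H}(\bar{1})| > \binom{x-1}{k}-\sum_{j=0}^{t-1}\binom{s-1}{j}\binom{x-s}{k-j},
\]
and since $\mathcal{H}(\bar{1})\subset EM(n,k,s-1,t)$, the induction hypothesis on $s$ would yield $|\partial \mathcal{H}(\bar{1})|>|\mathcal{H}(1)|$, contradicting shiftedness. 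Since $\mathcal{H}(1)\subset EM(n,k-1,s-1,t-1)$, the induction hypothesis on $k$ bounds $|\partial \mathcal{H}(1)|$, and summing the two lower bounds reassembles $\binom{x}{k-1}$ via Pascal and $\binom{s-1}{j}+\binom{s-1}{j-1}=\binom{s}{j}$, yielding the claimed estimate.

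For $\ell\ge 2$, I would induct on $\ell$ exactly as in the proof of Corollary \ref{coro-ell-shadow-k-s-t-exact}: by the inductive hypothesis, $\partial_{\ell-1}\mathcal{H}\subset EM(n,k,s,t-\ell+1)$ has size at least $\binom{x}{k-\ell+1}-\sum_{j=0}^{t-\ell}\binom{s}{j}\binom{x-s}{k-\ell+1-j}$, and applying the already-proved $\ell=1$ case to $\partial_{\ell-1}\mathcal{H}$ produces the desired bound on $|\partial_{\ell}\mathcal{H}|$.

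The main technical subtlety lies in the $\ell=1$ inductive step: when deducing from $|\mathcal{H}(\bar{1})|>\binom{x-1}{k}-\sum_{j=0}^{t-1}\binom{s-1}{j}\binom{x-s}{k-j}$ that the induction hypothesis forces $|\partial \mathcal{H}(\bar{1})|$ to strictly exceed $\binom{x-1}{k-1}-\sum_{j=0}^{t-2}\binom{s-1}{j}\binom{x-s}{k-1-j}$, one needs the real parameter $y$ witnessing the size of $\mathcal{H}(\bar{1})$ to satisfy $y>x-1$, together with monotonicity of the shadow bound in $y$ on the relevant range. Both are straightforward polynomial calculations (the leading $\binom{y}{k-1}$ term dominates), and this is the only place where the continuous argument differs in spirit from its discrete cascade version.
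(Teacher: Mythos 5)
Your proof is correct and is essentially the argument the paper intends: the paper merely asserts that ``the same induction argument as above gives'' this lemma, and you correctly reconstruct that argument by rerunning the $(k,s,t)$-induction for $\ell=1$ with the real parameter $x$ in place of the cascade parameters $a_i$, then inducting on $\ell$ exactly as in Corollary~\ref{coro-ell-shadow-k-s-t-exact}. You also correctly flag the one technical point the paper glosses over, namely the monotonicity in $x$ of the relevant polynomials, which is needed to transfer the lower bound $|\mathcal{H}(\bar 1)| > \binom{x-1}{k}-\sum_{j}\binom{s-1}{j}\binom{x-s}{k-j}$ into the corresponding strict lower bound on $|\partial\mathcal{H}(\bar 1)|$.
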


Let
\begin{align}
HM(n,k,s,t) = & \left\{ A\in\binom{[n]}{k}: |A\cap [s-1]|\ge 1\right\} \cup  \notag\\
               & \left\{A\in\binom{[n]}{k}: s\in A \text{ and } |A\cap [s+1,s+t]|\ge 1\right\}. \notag
\end{align}
Note that there is no constraint on the relation between $s$ and $t$ for $HM(n,k,s,t)$.

Similar to Lemmas \ref{lemma-decompose-LmEM(n,k,s,t}, \ref{lemma-k-cascade-resp}, and \ref{lemma-size-shadow-LmEM(n,k,s,t)}
we have the following result for $HM(n,k,s,t)$.

\begin{lemma}\label{lemma-decompose-HM(n,k,s,t)}
Let $n\ge k$. Then the following hold.
\begin{itemize}
\item[(a)] $|HM(n,k,s,t)| = \binom{n}{k} - \binom{n-s}{k} - \binom{n-s-t}{k-1}$ and $|\partial HM(n,k,s,t)| = \binom{n}{k-1}$.
\item[(b)] For every $m \le |HM(n,k,s,t)|$ there exist integers $a_{k}> \cdots > a_{h} \ge h \ge 1$ such that
\begin{align}
L_{m}HM(n,k,s,t) = HM(a_{k},k,s,t) \cup \bigcup_{i=h}^{k-1}
\left(HM(a_{i},i,s,t)+\{\widehat{a}_{i+1}, \ldots, \widehat{a}_{k}\}\right). \notag
\end{align}
\item[(c)] For every $m\ge 1$ there exists a unique sequence of integers $a_{k}> \cdots > a_{h} \ge h \ge 1$ such that
\begin{align}
m = \sum_{i=h}^{k}\binom{a_i}{i} - \sum_{i=h}^{k}\binom{a_{i}-s}{i} - \sum_{i=h}^{k}\binom{a_i-s-t}{i-1}. \notag
\end{align}
\item[(d)] If $m$ is given by the equation above, then
\begin{align}
|\partial L_{m}HM(n,k,s,t)| = \sum_{i=h}^{k}\binom{a_i}{i-1}. \notag
\end{align}
\end{itemize}
\end{lemma}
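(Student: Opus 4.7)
The four statements exactly mirror Lemmas \ref{lemma-decompose-LmEM(n,k,s,t}, \ref{lemma-k-cascade-resp}, and \ref{lemma-size-shadow-LmEM(n,k,s,t)} with the template family $EM$ replaced by $HM$, so my plan is to adapt the earlier arguments in the order (a), (b), (c), (d), modifying only what the new definition forces.

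For (a) I would prove the count by complementary counting. The complement of $HM(n,k,s,t)$ in $\binom{[n]}{k}$ is the disjoint union of $\{A:A\cap[s]=\emptyset\}$, of size $\binom{n-s}{k}$, and $\{A:s\in A,\ A\cap([s-1]\cup[s+1,s+t])=\emptyset\}$, of size $\binom{n-s-t}{k-1}$; subtracting from $\binom{n}{k}$ gives the stated value. For the shadow I would verify $\partial HM(n,k,s,t)=\binom{[n]}{k-1}$ by extending an arbitrary $(k-1)$-set $C\subset[n]$ inside $HM$: add any element if $C\cap[s-1]\ne\emptyset$; add some $x\in[s+1,s+t]\setminus C$ if $s\in C$ and $C\cap[s-1]=\emptyset$; otherwise add some $x\in[s-1]\setminus C$. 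These choices are available under the hypothesis $n\ge k$ together with the natural background assumptions on $s,t$.

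For (b) I would copy the inductive scheme of Lemma \ref{lemma-decompose-LmEM(n,k,s,t} verbatim, using that colex on $HM(n',k,s,t)$ is an initial segment of colex on $HM(n,k,s,t)$ whenever $n'\le n$: take $a_k$ maximal with $HM(a_k,k,s,t)\subset L_mHM(n,k,s,t)$, observe that the remaining sets all contain $\widehat a_k=a_k+1$, strip $\widehat a_k$ off, and recurse on $k$ after noting that the residue is an initial colex segment of $HM(a_k,k-1,s,t)$. Part (c) then follows at once: existence by choosing $n$ large enough that $m\le|HM(n,k,s,t)|$, applying (b), and evaluating each $|HM(a_i,i,s,t)|$ via (a); uniqueness is identical to the argument in Lemma \ref{lemma-k-cascade-resp}, where differing top indices $a_k\ne b_k$ would place one representing family strictly inside the other despite equal sizes.

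Part (d) is the most delicate step. Using (b) and the fact that $\partial HM(a_k,k,s,t)=\binom{[a_k]}{k-1}$ from (a), I would decompose
$\partial L_mHM(n,k,s,t)=\binom{[a_k]}{k-1}\cup\bigcup_{i=h}^{k-1}\partial\bigl(HM(a_i,i,s,t)+\{\widehat a_{i+1},\ldots,\widehat a_k\}\bigr)$.
Each nontrivial block splits into (i) sets obtained by deleting a vertex of the $HM(a_i,i,s,t)$ part, which contribute exactly $\binom{[a_i]}{i-1}+\{\widehat a_{i+1},\ldots,\widehat a_k\}$ by (a), and (ii) sets obtained by deleting some $\widehat a_j$ with $j\ge i+1$. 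For (ii), since $a_i<a_{i+1}<\cdots<a_k$, the $i$ vertices of any set in $HM(a_i,i,s,t)$ together with $\widehat a_{i+1},\ldots,\widehat a_{j-1}$ all lie in $[a_j]$, so the type (ii) sets embed into $\binom{[a_j]}{j-1}+\{\widehat a_{j+1},\ldots,\widehat a_k\}$ (or into $\binom{[a_k]}{k-1}$ when $j=k$), and are absorbed into the canonical blocks. Finally I would verify pairwise disjointness of the blocks $\binom{[a_k]}{k-1}$ and $\binom{[a_i]}{i-1}+\{\widehat a_{i+1},\ldots,\widehat a_k\}$ for $h\le i<k$ via the marker $\widehat a_j$: the block indexed by $i$ contains $\widehat a_j$ for every $j\ge i+1$, whereas the block indexed by $j>i$ is missing $\widehat a_j$ because its non-$\widehat a$ part lies in $[a_j]$. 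Summing the disjoint contributions yields $\sum_{i=h}^{k}\binom{a_i}{i-1}$. The main obstacle is exactly this bookkeeping -- the absorption of cross-deletions and the disjointness of the canonical blocks -- while everything else is routine adaptation of the $EM$ analogues already in the paper.
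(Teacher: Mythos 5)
Your proposal correctly mirrors the $EM$-arguments of Lemmas~\ref{lemma-decompose-LmEM(n,k,s,t}, \ref{lemma-k-cascade-resp}, and \ref{lemma-size-shadow-LmEM(n,k,s,t)}, which is exactly what the paper intends (the paper offers no explicit proof and simply declares the result ``similar''). Parts (b) and (c) carry over verbatim, and the absorption and disjointness bookkeeping you give for (d) is correct and is indeed the only genuinely new content.

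There is, however, a real gap hidden behind the phrase ``natural background assumptions on $s,t$''. The shadow claim in (a), namely $\partial HM(n,k,s,t)=\binom{[n]}{k-1}$, requires $s\ge 2$: your third extension rule, ``otherwise add some $x\in[s-1]\setminus C$'', has no candidate when $s=1$ because $[s-1]=\emptyset$. Concretely, $HM(n,k,1,t)=\{A: 1\in A,\ A\cap[2,t+1]\ne\emptyset\}$, and a $(k-1)$-set $C$ with $C\cap[t+1]=\emptyset$ admits no extension inside this family (adding $1$ is forced but then $A\cap[2,t+1]=\emptyset$), so in fact
\begin{align}
|\partial HM(n,k,1,t)|=\binom{n}{k-1}-\binom{n-t-1}{k-1}<\binom{n}{k-1}.\notag
\end{align}
This failure of (a) propagates directly to (d): for $s=1$ each block satisfies $\partial HM(a_i,i,1,t)\subsetneq\binom{[a_i]}{i-1}$, so your decomposition yields $|\partial L_mHM(n,k,1,t)|=\sum_i\binom{a_i}{i-1}-\sum_i\binom{a_i-t-1}{i-1}$, strictly less than the claimed $\sum_i\binom{a_i}{i-1}$ whenever some $a_i\ge i+t$. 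This is not a cosmetic degeneracy: the lemma is subsequently applied with $s=1$ (via $HM(n,k,1,k)$) in the proof of Theorem~\ref{thm-shadow-intersecting-all-n}. You should state $s\ge 2$ explicitly as a hypothesis in (a) and (d), and then check separately what replaces them in the $s=1$ application; waving at unspecified background assumptions is not enough when the borderline case is the one actually used.
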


\begin{lemma}\label{lemma-shadow-HM(n,k,s,t)-exact}
If $\mathcal{H} \subset HM(n,k,s,t)$ and $|\mathcal{H}| = m$, then $|\partial\mathcal{H}|\ge |\partial L_{m}HM(n,k,s,t)|$.
In particular, if $|\mathcal{H}| = \sum_{i=h}^{k}\binom{a_i}{i} - \sum_{i=h}^{k}\binom{a_{i}-s}{i} - \sum_{i=h}^{k}\binom{a_i-s-t}{i-1}$
for some integers $a_{k}> \cdots > a_{h} \ge h \ge 1$,
then $|\partial\mathcal{H}| \ge \sum_{i=h}^{k}\binom{a_i}{i-1}$.
\end{lemma}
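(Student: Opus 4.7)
The plan is to mimic the inductive proof of Lemma \ref{lemma-shadow-k-s-t-exact}, with $EM$ replaced by $HM$ throughout. By Fact \ref{fact} I may assume $\mathcal{H}$ is shifted, so that $\partial\mathcal{H}(\bar 1) \subset \mathcal{H}(1)$ and $|\partial\mathcal{H}| = |\mathcal{H}(1)| + |\partial\mathcal{H}(1)|$. Lemma \ref{lemma-decompose-HM(n,k,s,t)}(c) supplies the unique cascade representation of $m$, and Lemma \ref{lemma-decompose-HM(n,k,s,t)}(d) reduces the goal to showing $|\partial\mathcal{H}| \ge \sum_{i=h}^{k} \binom{a_i}{i-1}$, after which the ``in particular'' statement is immediate.

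I induct on $s$ (with $k$, $t$, $n$ all free). The base case $s = 1$ is a repackaging of Lemma \ref{lemma-shadow-k-s-t-exact}: every set in $HM(n,k,1,t)$ contains the vertex $1$, so $\mathcal{H} = \mathcal{H}(1) + \{1\}$ with $\mathcal{H}(1) \subset EM(n-1,k-1,t,1)$ after relabeling, while $L_{m}HM(n,k,1,t) = L_{m}EM(n-1,k-1,t,1) + \{1\}$. Thus $|\partial\mathcal{H}| = m + |\partial\mathcal{H}(1)|$ and $|\partial L_{m}HM(n,k,1,t)| = m + |\partial L_{m}EM(n-1,k-1,t,1)|$, so the desired inequality follows from Lemma \ref{lemma-shadow-k-s-t-exact} applied to $\mathcal{H}(1)$. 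For the inductive step $s \ge 2$, the family splits cleanly: $HM(n,k,s,t)(1) = \binom{[2,n]}{k-1}$ (since $1 \in [s-1]$ forces every set through $1$ into $HM$), while $HM(n,k,s,t)(\bar 1) \cong HM(n-1,k,s-1,t)$ after shifting labels down by one, so $\mathcal{H}(\bar 1) \subset HM(n-1,k,s-1,t)$ and the inductive hypothesis is available.

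The heart of the argument is the key claim
\[
|\mathcal{H}(1)| \ge \sum_{i=h}^{k} \binom{a_i - 1}{i-1},
\]
proved by contradiction in the style of Claim \ref{claim-H(1)-k-s-t}. If it fails strictly, the Pascal identity $\binom{a_i}{i} - \binom{a_i-1}{i-1} = \binom{a_i-1}{i}$ rewrites $m - |\mathcal{H}(1)|$ and shows that $|\mathcal{H}(\bar 1)|$ strictly exceeds the $HM(n-1,k,s-1,t)$-cascade value $\sum_{i=h}^{k} \bigl[\binom{a_i-1}{i} - \binom{a_i-s}{i} - \binom{a_i-s-t}{i-1}\bigr]$; the inductive hypothesis together with strict monotonicity of the colex shadow function on $HM(n-1,k,s-1,t)$ then yields $|\partial\mathcal{H}(\bar 1)| > \sum_{i=h}^{k} \binom{a_i-1}{i-1} > |\mathcal{H}(1)|$, contradicting $\partial\mathcal{H}(\bar 1) \subset \mathcal{H}(1)$. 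Once the claim is in hand, Kruskal-Katona applied to $\mathcal{H}(1) \subset \binom{[2,n]}{k-1}$ gives $|\partial\mathcal{H}(1)| \ge \sum_{i=h}^{k} \binom{a_i-1}{i-2}$, and adding the two bounds and applying Pascal's identity $\binom{a_i-1}{i-1} + \binom{a_i-1}{i-2} = \binom{a_i}{i-1}$ termwise yields the required $\sum_{i=h}^{k} \binom{a_i}{i-1}$.

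The main obstacle I foresee is making the strict monotonicity step in the contradiction rigorous: I need that if $m_0 < m' \le |HM(n-1,k,s-1,t)|$ then $|\partial L_{m'}HM(n-1,k,s-1,t)| > |\partial L_{m_0}HM(n-1,k,s-1,t)|$, which I will obtain by examining the next set $B$ added in colex order and observing that $B \setminus \{\max B\}$ contributes a new shadow element. The remaining bookkeeping --- checking the Pascal identities across the cascade recursion and handling degenerate cases where $a_i - s$ or $a_i - s - t$ is negative via the convention $\binom{a}{b} = 0$ for $a < b$ or $b < 0$ --- is routine.
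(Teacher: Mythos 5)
Your proposal tracks the paper's proof very closely: the same cascade reduction (Lemma \ref{lemma-decompose-HM(n,k,s,t)}), the same shift-based identity $|\partial\mathcal{H}| = |\mathcal{H}(1)|+|\partial\mathcal{H}(1)|$, the same key claim $|\mathcal{H}(1)| \ge \sum_{i=h}^{k}\binom{a_i-1}{i-1}$ proved by contradiction through $\mathcal{H}(\bar 1)$, and the same Kruskal--Katona finish. The only difference is cosmetic: you anchor the induction at $s=1$ via the observation that $HM(n,k,1,t)$ is a cone over $EM(n-1,k-1,t,1)$, whereas the paper uses $s=0$ and $t=0$ as base cases and runs a double induction.

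However, there is a genuine gap, and it lives exactly where you flagged a worry --- but the problem is not strict monotonicity of the shadow function, it is that the intermediate claim $|\mathcal{H}(1)| \ge \sum_{i=h}^{k}\binom{a_i-1}{i-1}$ is simply false. Take $k=3$, $s=2$, $t=1$, $n=10$ and
\[
\mathcal{H}=\bigl\{\{1,2,3\}\bigr\}\cup\bigl\{\{1,2,x\},\{1,3,x\},\{2,3,x\}:4\le x\le 10\bigr\}.
\]
This family is shifted, lies in $HM(10,3,2,1)$, and has $|\mathcal{H}|=22$ with unique cascade $a_3=7$, $a_2=3$ (since $\binom{7}{3}-\binom{5}{3}-\binom{4}{2}=19$ and $\binom{3}{2}-\binom{1}{2}-\binom{0}{1}=3$). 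Thus $\sum\binom{a_i-1}{i-1}=\binom{6}{2}+\binom{2}{1}=17$, yet $|\mathcal{H}(1)|=1+2\cdot 7=15<17$. The contradiction argument you (and the paper) run to prove the claim cannot be salvaged by monotonicity of $m\mapsto|\partial L_m HM(n-1,k,s-1,t)|$, because for $s-1=1$ the quantity $|\partial L_m HM(n-1,k,1,t)|$ equals $\sum\binom{b_i}{i-1}-\sum\binom{b_i-1-t}{i-1}$ in the appropriate cascade, not $\sum\binom{b_i}{i-1}$: indeed $|\partial HM(n,k,1,t)|=\binom{n}{k-1}-\binom{n-t-1}{k-1}$ rather than the $\binom{n}{k-1}$ asserted in Lemma \ref{lemma-decompose-HM(n,k,s,t)}(a),(d), since no $(k-1)$-set avoiding $[t+1]$ is in the shadow. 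So the induction hypothesis applied to $\mathcal{H}(\bar 1)\subset HM(n-1,k,1,t)$ only yields a bound that is short of $\sum\binom{a_i-1}{i-1}$ by roughly $\sum\binom{a_i-1-t}{i-1}$, a quantity that can be arbitrarily large; the single-step increase of the shadow from $L_X$ to $L_{X+1}$ (at most $k-1$) cannot close this gap. In the example above the lemma's final conclusion $|\partial\mathcal{H}|\ge\sum\binom{a_i}{i-1}=24$ does hold with equality, so the lemma itself may well be true, but the route through this claim does not establish it, and a different argument (or a corrected shadow formula for $HM(\cdot,k,1,t)$ together with a reworked claim) is needed.
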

\begin{proof}
Let $a_{k}> \cdots > a_{h} \ge h \ge 1$ be integers such that
\begin{align}
m = \sum_{i=h}^{k}\binom{a_i}{i} - \sum_{i=h}^{k}\binom{a_{i}-s}{i} - \sum_{i=h}^{k}\binom{a_i-s-t}{i-1}.\notag
\end{align}
Then by Lemma \ref{lemma-decompose-HM(n,k,s,t)}, it suffices to show
\begin{align}
|\partial\mathcal{H}| \ge  \sum_{i=h}^{k}\binom{a_i}{i-1}=|\partial L_{m}HM(n,k,s,t)|. \notag
\end{align}

We proceed by induction on $s$ and $t$.
When $s = 0$, this is trivially true.
When $t=0$, we have $HM(n,k,s,0) = EM(n,k,s,1)$, so the conclusion follows from
Lemma \ref{lemma-shadow-k-s-t-exact}.
So we may assume that $s\ge 1$ and $t\ge 1$.

\begin{claim}\label{claim-size-H(1)-in-HM(n,k,s,t)}
$|\mathcal{H}(1)| \ge \sum_{i=h}^{k}\binom{a_i-1}{i-1}$
\end{claim}
\begin{proof}[Proof of Claim \ref{claim-size-H(1)-in-HM(n,k,s,t)}]
Suppose not. Then
\begin{align}
|\mathcal{H}(\bar{1})| = |\mathcal{H}| - |\mathcal{H}(1)|
                 > \sum_{i=h}^{k}\binom{a_i-1}{i} - \sum_{i=h}^{k}\binom{a_i-s}{i} - \sum_{i=h}^{k}\binom{a_i-s-t}{i-1}.\notag
\end{align}
Since $\mathcal{H}(\bar{1})\subset HM(n,k,s-1,t)$,
by the induction hypothesis
$|\partial\mathcal{H}(\bar{1})|> \sum_{i=h}^{k}\binom{a_i-1}{i-1} > |\mathcal{H}(1)|$,
which contradicts the assumption that $\mathcal{H}$ is shifted.
\end{proof}

Now, by Claim \ref{claim-size-H(1)-in-HM(n,k,s,t)} and the Kruskal-Katona theorem,
\begin{align}
|\partial\mathcal{H}| \ge |\mathcal{H}(1)| + |\partial\mathcal{H}(1)|
                      \ge \sum_{i=h}^{k}\binom{a_i-1}{i-1} + \sum_{i=h}^{k}\binom{a_i-1}{i-2}
                      = \sum_{i=h}^{k}\binom{a_i}{i-1}. \notag
\end{align}
This completes the proof of Lemma \ref{lemma-shadow-HM(n,k,s,t)-exact}.
\end{proof}

Similarly, the same induction argument as above gives the following technically simpler version of Lemma \ref{lemma-shadow-HM(n,k,s,t)-exact}.

\begin{lemma}[Simplified version of Lemma \ref{lemma-shadow-HM(n,k,s,t)-exact}]\label{lemma-shadow-HM(n,k,s,t)-simple}
Suppose that $\mathcal{H} \subset HM(n,k,s,t)$ and
$|\mathcal{H}| = \binom{x}{k} - \binom{x-s}{k} - \binom{x-s-t}{k-1}$ for some $x\in \mathbb{R}$.
Then $|\partial\mathcal{H}| \ge \binom{x}{k-1}$.
\end{lemma}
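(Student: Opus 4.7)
The plan is to mirror the induction argument used in the proof of Lemma \ref{lemma-shadow-HM(n,k,s,t)-exact}, but with each cascade sum $\sum_{i=h}^{k}\binom{a_i}{\cdot}$ replaced by the single binomial $\binom{x}{\cdot}$. By Fact \ref{fact} and the shift-invariance of $HM(n,k,s,t)$, we may assume $\mathcal{H}$ is shifted, so that $\partial\mathcal{H}(\bar 1)\subset\mathcal{H}(1)$ and $|\partial\mathcal{H}|=|\mathcal{H}(1)|+|\partial\mathcal{H}(1)|$.

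We induct on $s$ and $t$. The base case $s=0$ is vacuous since $HM(n,k,0,t)$ is empty, and for $t=0$ the family $HM(n,k,s,0)$ reduces to an $EM$-type family whose size and shadow bound can be read off from Lemma \ref{lemma-shadow-k-s-t-simple} after a Pascal-identity rewrite.

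For the inductive step ($s,t\ge 1$) the crux is the estimate $|\mathcal{H}(1)|\ge\binom{x-1}{k-1}$, proved by contradiction. If it fails then Pascal's identity yields
\[
|\mathcal{H}(\bar 1)|\;>\;|\mathcal{H}|-\binom{x-1}{k-1}\;=\;\binom{x-1}{k}-\binom{x-s}{k}-\binom{x-s-t}{k-1},
\]
which is precisely the inductive target size for $HM(n-1,k,s-1,t)$ at parameter $x-1$. Since $\mathcal{H}(\bar 1)\subset HM(n-1,k,s-1,t)$ after the relabeling $[2,n]\to[1,n-1]$, we pick (by continuity and strict monotonicity of the target-size function) some $y>x-1$ realizing $|\mathcal{H}(\bar 1)|$, apply the inductive hypothesis at $y$, and obtain $|\partial\mathcal{H}(\bar 1)|\ge\binom{y}{k-1}>\binom{x-1}{k-1}$. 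This contradicts $\partial\mathcal{H}(\bar 1)\subset\mathcal{H}(1)$.

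With the claim in hand, $\mathcal{H}(1)\subset\binom{[2,n]}{k-1}$ is an arbitrary family of $(k-1)$-subsets, so the Lov\'asz form of the Kruskal--Katona theorem (the case $s=t=0$ of Lemma \ref{lemma-shadow-k-s-t-simple}) gives $|\partial\mathcal{H}(1)|\ge\binom{x-1}{k-2}$, and then
\[
|\partial\mathcal{H}|\;\ge\;\binom{x-1}{k-1}+\binom{x-1}{k-2}\;=\;\binom{x}{k-1}
\]
by Pascal's identity. The chief obstacle is the monotonicity/continuity step that promotes a strict inequality on $|\mathcal{H}(\bar 1)|$ to one on its shadow bound; this reduces to showing the target-size function is strictly increasing in $y$ in the relevant range, which holds once $y$ is sufficiently large, with the remaining small-parameter boundary cases handled by direct verification.
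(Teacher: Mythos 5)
Your proof mirrors the paper's intended argument exactly: the paper dispatches this simplified lemma by remarking that ``the same induction argument as above gives'' it, and your induction on $(s,t)$, with the claim $|\mathcal{H}(1)| \ge \binom{x-1}{k-1}$ proved by contradiction via the inductive hypothesis on $\mathcal{H}(\bar{1}) \subset HM(n-1,k,s-1,t)$ and the Pascal rewrite $\binom{x}{k}-\binom{x-1}{k-1}=\binom{x-1}{k}$, is precisely that induction with the real parameter $x$ in place of the cascade sums. You correctly flag the one point the paper leaves implicit, namely that passing from $|\mathcal{H}(\bar 1)|>g(x-1)$ to $|\partial\mathcal{H}(\bar 1)|\ge\binom{x-1}{k-1}$ requires the target-size function and $\binom{\cdot}{k-1}$ to be increasing in the relevant range; this is routine once $x$ is taken in the standard range for the Lov\'asz form, and your handling of it is adequate.
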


\subsection{Proof of Theorem \ref{thm-shadow-intersecting-all-n}}
The proof of Theorem \ref{thm-shadow-intersecting-all-n} uses the following structural theorem for intersecting families.

For $1\le t\le k-1$ let
\begin{align}
    HM(n,k,t) = & \left\{A\in EKR(n,k): A \cap [2,k] \neq \emptyset \text{ or } [k+1,k+t] \subset A \right\} \cup \notag\\
    & \bigcup_{i=1}^{t} \{\{2,\ldots,k,k+i\}\}. \notag
\end{align}
Note that $|HM(n,k,2)| = \binom{n-1}{k-1}-\binom{n-k}{k-1}+\binom{n-k-2}{k-3}+2$ and $HM(n,k,1)$ is the extremal configuration in the Hilton-Milner theorem on nontrivial intersecting families.

\begin{thm}[Han and Kohayakawa, \cite{HK17}]\label{thm-HK17-structure-HM'}
Let $k\ge 3$ and $n> 2k$ and let $\mathcal{H}$ be an $n$-vertex intersecting $k$-graph.
If $\mathcal{H} \not\subset EKR(n,k)$ and $\mathcal{H} \not\subset HM(n,k,1)$ and for $k=3$ $\mathcal{H} \not\subset EM(n,3,3,2)$ as well,
then $|\mathcal{H}| \le |HM(n,k,2)|$.
\end{thm}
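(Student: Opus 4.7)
By Fact~\ref{fact} applied to the shift operators $S_{ij}$ with $2 \le i < j \le n$, I may replace $\mathcal{H}$ by a family of the same size, still intersecting, in which both $\mathcal{H}_1 := \{A\in\mathcal{H}: 1\in A\}$ and $\mathcal{H}_{\bar{1}} := \{A\in\mathcal{H}: 1\notin A\}$ are shifted as subfamilies of $\binom{[2,n]}{k}$ (note that restricting to $i\ge 2$ preserves vertex $1$'s membership in each edge and hence preserves the three excluded-subset hypotheses). Since $\mathcal{H} \not\subset EKR(n,k)$ we have $\mathcal{H}_{\bar{1}} \neq \emptyset$, and shiftedness pushes $A_0 := \{2,\ldots,k+1\}$ into $\mathcal{H}_{\bar{1}}$. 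The hypothesis $\mathcal{H}\not\subset HM(n,k,1)$ yields an edge outside $HM(n,k,1)$; an edge of $\mathcal{H}_1$ disjoint from $[2,k+1]$ would fail to meet $A_0$, so this extra edge lies in $\mathcal{H}_{\bar{1}}$, and shiftedness then forces $A_1 := \{2,\ldots,k,k+2\} \in \mathcal{H}_{\bar{1}}$.

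\textbf{Bounding $\mathcal{H}_1$ and the colex cascade.} The presence of $A_0,A_1$ in $\mathcal{H}_{\bar{1}}$ forces every $A \in \mathcal{H}_1$ with $A \cap [2,k] = \emptyset$ to contain $\{k+1, k+2\}$. Thus
\[
\mathcal{H}_1 \subset \bigl\{A \in \binom{[n]}{k} : 1 \in A,\ A \cap [2,k] \neq \emptyset \text{ or } \{k+1,k+2\} \subset A\bigr\},
\]
and the right-hand side has exactly $|HM(n,k,2)| - 2$ elements. So it suffices to prove $|\mathcal{H}_{\bar{1}}| \le 2$. The plan is to run a colex cascade: ask successively whether $A_2 := \{2,\ldots,k-1,k+1,k+2\}$, $A_3 := \{2,\ldots,k-1,k+1,k+3\}$, \ldots lie in $\mathcal{H}_{\bar{1}}$. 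If the cascade stops at $A_1$, then shiftedness on $[2,n]$ plus intersectingness forces $\mathcal{H}_{\bar{1}} = \{A_0, A_1\}$ and we are done. Otherwise, for the first $j \ge 2$ with $A_j \in \mathcal{H}_{\bar{1}}$, every edge of $\mathcal{H}_1$ must also meet $A_j$, which forbids a large block of $1$-containing edges; for example, $A_2$ excludes $\binom{n-k-2}{k-2}=\Omega(n^{k-2})$ candidates from $\mathcal{H}_1$, whereas shiftedness adds only $O(n^{k-3})$ new edges to $\mathcal{H}_{\bar{1}}$. For $n > 2k$ this trade-off keeps $|\mathcal{H}|$ strictly below $|HM(n,k,2)|$, and one iterates the argument for deeper cascade steps.

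\textbf{The $k=3$ exception and main obstacle.} The additional hypothesis for $k=3$ arises because the cascade can collapse differently: if $\{3,4,5\} \in \mathcal{H}_{\bar{1}}$, shiftedness on $[2,n]$ forces all of $\binom{[2,5]}{3} \subset \mathcal{H}_{\bar{1}}$ and intersectingness confines $\mathcal{H}_1$ into edges of the form $\{1\}\cup B$ with $B\in\binom{[2,5]}{2}$, yielding a $10$-edge configuration that, after fully shifting vertex $1$, is isomorphic to $EM(n,3,3,2)$; this is precisely the competing structure that the $k=3$ hypothesis rules out. The main obstacle I anticipate is controlling the cascade trade-off uniformly across all $k$: at each cascade step one must verify that the binomial loss in $\mathcal{H}_1$ coming from intersectingness with the new $A_j$ strictly dominates the shift-forced gain in $\mathcal{H}_{\bar{1}}$, and that no earlier termination branch of the cascade produces a family beating $|HM(n,k,2)|$. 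All the individual estimates reduce to elementary binomial inequalities in the range $n > 2k$, but the bookkeeping is delicate, especially for small $k$ where the dominant term is of moderate size.
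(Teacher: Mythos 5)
This theorem is cited by the paper from Han and Kohayakawa \cite{HK17}; the paper contains no proof of it, so there is no internal argument to compare your attempt against. Evaluating your sketch on its own merits, the overall strategy (shift on $[2,n]$, split into $\mathcal{H}_1$ and $\mathcal{H}_{\bar 1}$, force $\{2,\dots,k+1\}$ and $\{2,\dots,k,k+2\}$ into $\mathcal{H}_{\bar 1}$, and then trade off edges gained in $\mathcal{H}_{\bar 1}$ against edges lost from $\mathcal{H}_1$) is a sensible Hilton--Milner--type approach, and your identification of the $\binom{n-k-2}{k-2}$ loss from adding $\{2,\dots,k-1,k+1,k+2\}$ to $\mathcal{H}_{\bar 1}$ is a correct sample computation. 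However, there are several genuine gaps.

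First, the claim that ``restricting to $i\ge 2$ preserves the three excluded-subset hypotheses'' is not justified and is in fact false in general: shifting with $2\le i<j$ preserves whether $1$ lies in each edge, but it can turn a family with $\tau(\mathcal{H})\ge 2$ into a star at some vertex $v\neq 1$. For example $\{\{1,2,3\},\{2,4,5\},\{3,4,5\}\}$ has $\tau=2$, yet applying $S_{24}$ sends $\{3,4,5\}$ to $\{2,3,5\}$ and the image is a star at $2$, i.e., contained in an isomorph of $EKR(n,3)$. Handling this requires the standard (but not automatic) device of examining the last shift that collapses $\tau$, and you do not address it. Second, your cascade is written as a single linear chain $A_2=\{2,\dots,k-1,k+1,k+2\}$, $A_3=\{2,\dots,k-1,k+1,k+3\},\dots$, but the domination order above $\{A_0,A_1\}$ branches: the two atoms are $\{2,\dots,k-1,k+1,k+2\}$ and $\{2,\dots,k,k+3\}$, and your sentence ``If the cascade stops at $A_1$, then shiftedness forces $\mathcal{H}_{\bar 1}=\{A_0,A_1\}$'' is simply not true, since $\mathcal{H}_{\bar 1}$ could contain $\{2,\dots,k,k+3\}$ while avoiding $A_2$. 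This second branch is exactly where the $k=3$ exception lives: for $k=3$ the chain $\{2,3,4\},\{2,3,5\},\{2,3,6\},\dots$ is a zero-cost cascade (each new $\{2,3,j\}$ removes exactly one candidate from $\mathcal{H}_1$ while adding one edge to $\mathcal{H}_{\bar 1}$) and its terminal object is precisely $EM(n,3,3,2)$. Your description of the $k=3$ exception via $\{3,4,5\}$ points to a different configuration and does not capture this. Finally, you explicitly defer the uniform bookkeeping (``the bookkeeping is delicate''), so the argument is not complete even granting the earlier steps. As it stands this is a plausible outline of an approach, not a proof; the branching of the cascade and the preservation of the non-star hypothesis under shifting both need real arguments before the trade-off inequalities can be assembled into the claimed bound.
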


\begin{proof}[Proof of Theorem \ref{thm-shadow-intersecting-all-n}]
By the assumption on the size of $\mathcal{H}$ and Theorem \ref{thm-HK17-structure-HM'},
for $k\ge 4$ either $\mathcal{H} \subset EKR(n,k)$ or $\mathcal{H} \subset HM(n,k,1)$,
and for $k=3$ we have $\mathcal{H} \subset EKR(n,3)$.

Suppose that $k=3$.
Since $\mathcal{H} \subset EKR(n,3) = EM(n,3,1,1)$, by Corollary \ref{coro-ell-shadow-k-s-t-exact},
$|\partial_{\ell}\mathcal{H}| \ge |\partial_{\ell} L_{m}EKR(n,3)|$ for $1\le \ell \le 2$.

Now suppose that $k\ge 4$.
Let $a_k> \cdots > a_h \ge h \ge 1$ be integers such that $|\mathcal{H}| = \sum_{i=h}^{k}\binom{a_i}{i} - \sum_{i=h}^{k}\binom{a_i-1}{i}$.
If $\mathcal{H} \subset EKR(n,k)=EM(n,k,1,1)$, then by Corollary \ref{coro-ell-shadow-k-s-t-exact},
$|\partial_{\ell}\mathcal{H}|\ge  |\partial_{\ell}L_{m}EM(n,k,1,1)|
=|\partial_{\ell}L_{m}EKR(n,k)|$ and we are done.
So we may assume that $\mathcal{H} \subset HM(n,k,1)$ and we are going to show that
$|\partial_{\ell}\mathcal{H}|> \sum_{i=h}^{k}\binom{a_i}{i-\ell}=|\partial_{\ell}L_{m}EKR(n,k)|$ in this case.

Suppose that $|\partial_{\ell}\mathcal{H}| \le \sum_{i=h}^{k}\binom{a_i}{i-\ell}$.
Let $\mathcal{H}' = \mathcal{H}\setminus\{\{2,\ldots,k+1\}\}$
and note that $|\partial_{\ell}\mathcal{H}'|\le|\partial_{\ell}\mathcal{H}| \le \sum_{i=h}^{k}\binom{a_i}{i-\ell}$.
Applying the the contrapositive of the Kruskal-Katona theorem to $\mathcal{H}'$ we obtain $|\partial \mathcal{H}'| \le \sum_{i=h}^{k}\binom{a_i}{i-1}$.
On the other hand, since $\mathcal{H} \subset HM(n,k,1)$, $\mathcal{H}' \subset HM(n,k,1)\setminus \{\{2,\ldots,k+1\}\} = HM(n,k,1,k)$.
So applying the contrapositive of Lemma \ref{lemma-shadow-HM(n,k,s,t)-exact}
to $\mathcal{H}'$ we obtain
\begin{align}
|\mathcal{H}|
\le |\mathcal{H}'|+1
& \le \sum_{i=h}^{k}\binom{a_i}{i} - \sum_{i=h}^{k}\binom{a_i-1}{i} - \sum_{i=h}^{k}\binom{a_i-1-k}{i-1} + 1 \notag\\
& = \sum_{i=h}^{k}\binom{a_i-1}{i-1} - \sum_{i=h}^{k}\binom{a_i-1-k}{i-1} + 1. \notag
\end{align}

\begin{claim}\label{claim-size-a_k-a_k-1}
$a_k \ge 2k$ and if $a_k = 2k$ then $a_{k-1} = 2k-1$.
\end{claim}
\begin{proof}[Proof of Claim \ref{claim-size-a_k-a_k-1}]
First, suppose that $a_k \le 2k-1$. Then
\begin{align}
|\mathcal{H}|   & \le \sum_{i=h}^{k}\binom{a_i-1}{i-1} - \sum_{i=h}^{k}\binom{a_i-1-k}{i-1} + 1 \notag\\
                & \le \sum_{i=1}^{k}\binom{k+i-2}{i-1} - \sum_{i=1}^{k}\binom{i-2}{i-1} + 1 \notag\\
                &  =   \binom{2k-1}{k-1} + 1
                   <  \binom{2k}{k-1}-\binom{k}{k-1}-\binom{k-1}{k-2} +3, \notag
\end{align}
which contradicts the assumption that  $|\mathcal{H}| > |HM(n,k,2)|$ and $n> 2k$.
Therefore, $a_k \ge 2k$.

Now suppose that $a_k = 2k$ and $a_{k-1}\le 2k-2$.
Then, 
\begin{align}
|\mathcal{H}|   & \le \sum_{i=h}^{k}\binom{a_i-1}{i-1} - \sum_{i=h}^{k}\binom{a_i-1-k}{i-1} + 1 \notag\\
                & \le \binom{2k-1}{k-1} + \sum_{i=1}^{k-1}\binom{k+i-2}{i-1} - \sum_{i=1}^{k}\binom{i-2}{i-1} \notag\\
                &  =   \binom{2k-1}{k-1} + \binom{2k-2}{k-2}
                   <  \binom{2k}{k-1}-\binom{k}{k-1}-\binom{k-1}{k-2} +3, \notag
\end{align}
where the strict inequality uses $k \ge 4$ and $n>2k$. This contradicts the assumption that  $|\mathcal{H}| > |HM(n,k,2)|$.
\end{proof}

Claim \ref{claim-size-a_k-a_k-1} implies that $\sum_{i=h}^{k}\binom{a_i-1-k}{i-1} - 1 > 0$.
Therefore, $$|\mathcal{H}| \le \sum_{i=h}^{k}\binom{a_i-1}{i-1} - \sum_{i=h}^{k}\binom{a_i-1-k}{i-1} + 1 < \sum_{i=h}^{k}\binom{a_i-1}{i-1},$$
contradicts the assumption that $|\mathcal{H}| = \sum_{i=h}^{k}\binom{a_i}{i} - \sum_{i=h}^{k}\binom{a_i-1}{i} = \sum_{i=h}^{k}\binom{a_i-1}{i-1}$.
Therefore, if $\mathcal{H} \subset HM(n,k,1)$, then $|\partial_{\ell}\mathcal{H}|> \sum_{i=h}^{k}\binom{a_i}{i-\ell}$,
and this completes the proof of Theorem \ref{thm-shadow-intersecting-all-n}.
\end{proof}

\subsection{Proof of Theorem \ref{thm-shadow-t-intersecting-all-n}}
In this section we prove Theorem \ref{thm-shadow-t-intersecting-all-n}. We need the following  theorem for $t$-intersecting families.

\begin{thm}[Ahlswede and Khachatrian, \cite{AK96}]\label{thm-nontrivial-t-intersecting-AK96}
Let $t\ge 1, k\ge 3$, and $n> (t+1)(k-t+1)$.
Suppose that $\mathcal{H} \subset \binom{[n]}{k}$ is a $t$-intersecting family
and
\begin{align}
|\mathcal{H}| = m >
\begin{cases}
\max\left\{|AK(n,k,t)|,|EM(n,k,t+2,t+1)|\right\}, & \text{ if } t < \frac{k-1}{2}, \\
|EM(n,k,t+2,t+1)|, & \text{ if } t \ge \frac{k-1}{2}.
\end{cases}\notag
\end{align}
Then $\mathcal{H} \subset EM(n,k,t,t)$.
\end{thm}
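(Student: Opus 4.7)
My plan is to prove this structural theorem by combining shifting with a careful case analysis of the sub-extremal configurations. First I would invoke Fact \ref{fact}: shifting preserves the $t$-intersecting property and $|\mathcal{H}|$, so without loss of generality $\mathcal{H}$ is fully shifted. The goal then reduces to showing that any shifted $t$-intersecting family $\mathcal{H} \subset \binom{[n]}{k}$ of size exceeding the stated threshold must satisfy $[t] \subset A$ for every $A \in \mathcal{H}$, equivalently $\mathcal{H} \subset EM(n,k,t,t)$.

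Next, I would suppose for contradiction that some $A_0 \in \mathcal{H}$ omits an element of $[t]$, and let $B \in \mathcal{H}$ be the colex-minimum set with $[t] \not\subset B$. Shifting together with the $t$-intersection of $B$ with every other set in $\mathcal{H}$ forces $B$ to have a rigid form: $B = ([t+1] \setminus \{i\}) \cup C$ for some $i \in [t]$ and some $C \subset [t+2,n]$ with $|C| = k-t$. This is the analog, for general $t$, of the extremal-witness step used in the shifted proof of Hilton-Milner.

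I would then bifurcate according to the minimum of $|B \cap [t+2]|$ over all non-star $B \in \mathcal{H}$. If some non-star $B$ has $|B \cap [t+2]| = t$, shifting and pairwise $t$-intersection force every other $A \in \mathcal{H}$ to either contain $[t]$ and meet $[t+1,k+1]$, or else coincide with one of finitely many exceptional sets; this yields $\mathcal{H} \subset AK(n,k,t)$ up to isomorphism. Otherwise every $A \in \mathcal{H}$ satisfies $|A \cap [t+2]| \ge t+1$, so $\mathcal{H} \subset EM(n,k,t+2,t+1)$. Each conclusion contradicts $|\mathcal{H}| > m(n,k,t)$, closing the argument. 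In the range $t \ge (k-1)/2$, a direct binomial comparison using $n > (t+1)(k-t+1)$ shows $|AK(n,k,t)| \le |EM(n,k,t+2,t+1)|$, which is why the $AK$ term drops out of the threshold in that range.

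The principal obstacle will be the case analysis itself: plain shifting does not separate the two sub-extremal shapes $AK(n,k,t)$ and $EM(n,k,t+2,t+1)$ cleanly, since both are already shifted $t$-intersecting families. A fully rigorous proof most likely needs the Ahlswede-Khachatrian ``pushing-pulling'' compression, which refines shifting so as to compare the Frankl families $\mathcal{F}_i = EM(n,k,t+2i,t+i)$ directly; pushing the case analysis through with shifting alone is possible in principle but would require a substantially more delicate accounting than in the $t=1$ Hilton-Milner analog.
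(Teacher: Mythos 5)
This statement is Theorem~\ref{thm-nontrivial-t-intersecting-AK96}, which the paper attributes to Ahlswede and Khachatrian \cite{AK96} and invokes as a black box; the paper contains no proof of it, so there is no internal argument to compare your sketch against. Evaluating your proposal on its own terms, the high-level strategy (shift, isolate a colex-minimal non-star witness $B$, and split according to $|B \cap [t+2]|$) is the right flavor for a Hilton--Milner-type stability argument, but two steps are asserted rather than proved and the second of them is in fact not correct as stated. The ``rigid form'' claim $B = ([t+1]\setminus\{i\}) \cup C$ with $C \subset [t+2,n]$ does not follow merely from shiftedness and $t$-intersection unless you already know $\mathcal{H}$ contains a rich collection of $t$-star sets intersecting $[t+1]$ in a controlled way; for a general shifted $t$-intersecting $\mathcal{H}$ of the stated size this needs an argument. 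More seriously, in the ``otherwise'' branch you conclude that \emph{every} $A \in \mathcal{H}$ has $|A \cap [t+2]| \ge t+1$, hence $\mathcal{H} \subset EM(n,k,t+2,t+1)$. But the complementary hypothesis only constrains non-star sets; a star set $A$ with $[t] \subset A$ and $A \cap \{t+1,t+2\} = \emptyset$ is not excluded by the case hypothesis, and its $t$-intersection with a non-star $B$ containing $([t]\setminus\{i\}) \cup \{t+1,t+2\}$ is not automatic from $[t]\setminus\{i\}$ alone. So the dichotomy as written does not close.

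You correctly diagnose the core difficulty in your final paragraph: plain shifting is not known to separate $AK(n,k,t)$ from $EM(n,k,t+2,t+1)$ cleanly, and the actual proof in \cite{AK96} uses the generating-set machinery and pushing--pulling compressions of the Complete Intersection Theorem rather than $S_{ij}$-shifting plus elementary case analysis. Since this is a deep external theorem that the present paper simply cites, the realistic path for you is either to cite \cite{AK96} as the authors do, or to reconstruct the Ahlswede--Khachatrian compression argument in full, which is a substantial undertaking well beyond the two-case sketch here. As a smaller correctness note, when comparing the two thresholds you should check the direction of the inequality: for $t \ge \frac{k-1}{2}$ one has $|AK(n,k,t)| \sim (k-t+1)\binom{n}{k-t-1} \le (t+2)\binom{n}{k-t-1} \sim |EM(n,k,t+2,t+1)|$, which matches your remark, but it is the $AK$ bound that is being \emph{absorbed}, not the other way around, and this should be stated with care since the theorem's threshold is a strict lower bound that must dominate the larger of the two families.
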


\begin{proof}[Proof of Theorem \ref{thm-shadow-t-intersecting-all-n}]
Suppose $\mathcal{H}$ is given as in Theorem~\ref{thm-shadow-t-intersecting-all-n}.
By Theorem \ref{thm-nontrivial-t-intersecting-AK96},
$\mathcal{H} \subset EM(n,k,t,t)$ and by Corollary \ref{coro-ell-shadow-k-s-t-exact}, we have $|\partial_{\ell}{\mathcal H}| \ge |\partial_{\ell}L_mEM(n,k,t,t)|$.

We now show that the value of $m(n,k,t)$ in the theorem cannot be reduced for $t \ge \frac{k-1}{2}$
and is tight up to a constant multiplicative factor for $t < \frac{k-1}{2}$.
Indeed, our construction is $\mathcal{H} = EM(n,k,t+2, t+1)$ and hence it suffices to prove the following.

\begin{fact} \label{fact1.7}
Let $n$ be sufficiently large and $m=|EM(n,k,t+2,t+1)|$.
Then $$|\partial_{\ell} EM(n,k,t+2,t+1)| < |\partial_{\ell} L_{m}EM(n,k,t,t)| \quad \text{ for all } \quad 1 \le \ell \le t.$$
In particular, for $1 \le \ell \le t$
the lower bound $m(n,k,t)$ for $|\mathcal{H}|$ in Theorem \ref{thm-shadow-t-intersecting-all-n} cannot be reduced to be less than $|EM(n,k,t+2,t+1)| \sim (t+2)\binom{n}{k-t-1}$.
\end{fact}

Note that when $t < \frac{k-1}{2}$ Fact \ref{fact1.7} implies that the constant multiplicative factor
is at most $|AK(n,k,t)|/|EM(n,k,t+2,t+1)| \sim \frac{k-t+1}{t+2}$ which is independent of $n$.

Let $x\in \mathbb{R}$ such that $\binom{x-t}{k-t} = |EM(n,k,t+2,t+1)| = (t+2)\binom{n-t-2}{k-t-1}+\binom{n-t-2}{k-t-2}$,
then $x = \Theta(n^{\frac{k-t-1}{k-t}})$.
Applying Lemma \ref{lemma-size-shadow-LmEM(n,k,s,t)} to $EM(n,k,t,t)$, we obtain
\begin{align}
|\partial_{\ell} L_{m}EM(n,k,t,t)|
& \ge \sum_{i= t-\ell}^{k-\ell}\binom{t}{i}\binom{x-t}{k-\ell-i} \notag\\
& = \binom{t}{t-\ell}\binom{x-t}{k-t} + (1+o(1))\binom{t}{t-\ell+1}\binom{x-t}{k-t-1} \notag\\
& = (t+2)\binom{t}{t-\ell}\binom{n}{k-t-1} + \Theta(n^{\frac{(k-t-1)^2}{k-t}}), \notag
\end{align}
and
\begin{align}
|\partial_{\ell} EM(n,k,t+2,t+1)|
& = \sum_{i=t+1-\ell}^{k-\ell}\binom{t+2}{i}\binom{n-t+2}{k-\ell-i} \notag\\
& = \binom{t+2}{t+1-\ell}\binom{n}{k-t-1} + \Theta(n^{k-t-2}). \notag
\end{align}

If $\ell < t$, then
$$|\partial_{\ell} L_{m}EM(n,k,t,t)| \sim (t+2)\binom{t}{t-\ell}\binom{n}{k-t-1} $$
and
$$|\partial_{\ell} EM(n,k,t+2,t+1)| \sim \binom{t+2}{t+1-\ell}\binom{n}{k-t-1}.$$
Since $\binom{t+2}{t+1-\ell} < (t+2)\binom{t}{t-\ell}$ for $\ell < t$,
$$|\partial_{\ell} EM(n,k,t+2,t+1)| < |\partial_{\ell} L_{m}EM(n,k,t,t)|$$ for large $n$.

If $\ell = t$, then $$|\partial_{\ell} L_{m}EM(n,k,t,t)| \sim (t+2)\binom{n}{k-t-1} + \Theta(n^{\frac{(k-t-1)^2}{k-t}})$$
and $$|\partial_{\ell} EM(n,k,t+2,t+1)| \sim (t+2)\binom{n}{k-t-1} + \Theta(n^{k-t-2}).$$
Since $\frac{(k-t-1)^2}{k-t} > k-t-2$,
$$|\partial_{t+1} EM(n,k,t+2,t+1)| < |\partial_{t+1} L_{m}EM(n,k,t,t)|$$ for large $n$. Consequently, Fact~\ref{fact1.7} holds and the proof is complete.
\end{proof}

\subsection{Proof of Theorem \ref{thm-EM(n,k,s)-n^k-2}}
Before proving Theorem \ref{thm-EM(n,k,s)-n^k-2} we need some structure theorems for
a family with large size and a given matching number.

\begin{dfn}
Let $n\ge sk+1$, $k \ge 3$, and $s\ge 1$.
Let $v_0,\ldots, v_{s-1} \in [n]$ be distinct vertices, $T_1,\ldots, T_s \subset [n]$ be pairwise disjoint $k$-sets,
and $v_i \in T_i$ for all $1\le i \le s-1$, and $v_0 \not\in T_i$ for all $1\le i \le s$.
Let
\begin{align}
PF(n,k,s) & = \left\{ T_1,\ldots, T_s \right\} \cup \notag\\
            & \quad \left\{A\in \binom{[n]}{k}:  \exists 0\le i \le s-1 \text{ such that } x_i\in A \text{ and } |A\cap \bigcup_{j=i+1}^{s}T_i|\ge 1 \right\}. \notag
\end{align}
\end{dfn}
Notice that $|PF(n,k,s)| \sim k\binom{s+1}{2}\binom{n}{k-2}$.

\begin{thm}[Kostochka and Mubayi, \cite{KM17}]\label{thm-KM17-induction-n^k-2}
For every $k\ge 3$, $s\ge t \ge 2$, there exists $n_0$ such that the following holds for all $n\ge n_0$.
Suppose that $\mathcal{H} \subset \binom{[n]}{k}$ satisfies $\nu(\mathcal{H}) = s$ and
\begin{align}
|\mathcal{H}| >
\begin{cases}
|EM(n,3,s-t,1)| + |EM(n-s+t,3,2s+1,2)| & \text{ if } k=3, \\
|EM(n,k,s-t,1)| + |PF(n-s+t, k, t)| & \text{ if } k \ge 4.
\end{cases}\notag
\end{align}
Then there exists $X \subset [n]$ with $|X| = s-t+1$ such that $\nu(H-X) = t-1$.
The bound on $|\mathcal{H}|$ is tight.
In particular, if
\begin{align}
|\mathcal{H}| >
\begin{cases}
|EM(n,3,2s+1,2)| & \text{ for } k=3, \\
|PF(n, k, s)| & \text{ for } k \ge 4,
\end{cases}\notag
\end{align}
then there exists $v \in [n]$ such that $\nu(H-v) = s-1$.
\end{thm}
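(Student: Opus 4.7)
The plan is to prove the theorem by induction on $s-t\ge 0$, taking the ``in particular'' statement ($t=s$) as the base case. By Fact~\ref{fact}, shifting preserves $|\mathcal{H}|$ and $\nu(\mathcal{H})$, so I may assume $\mathcal{H}$ is shifted; this assumption is crucial for the structural analysis in the base case, as it forces every edge to lie close to the small-labelled vertices.

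\textbf{Base case} ($t=s$). Here one must show that if $|\mathcal{H}|>|PF(n,k,s)|$ (respectively $|EM(n,3,2s+1,2)|$ for $k=3$), then some $v\in[n]$ satisfies $\nu(\mathcal{H}-v)=s-1$. I argue by contradiction: assume $\nu(\mathcal{H}-v)=s$ for every $v$. Fix a maximum matching $M=\{e_1,\ldots,e_s\}$ and set $V=V(M)$; every edge of $\mathcal{H}$ meets $V$. For each $v\in V$ the augmenting matching $M_v$ of size $s$ in $\mathcal{H}-v$ forces some edge of $\mathcal{H}$ to meet two distinct $e_i$'s. I then partition $\mathcal{H}$ into (i)~\emph{centers}, edges meeting exactly one $e_i$; (ii)~\emph{crosses}, edges meeting at least two $e_i$'s; and (iii)~edges contained in $V$. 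Using shiftedness, each center is pinned to a small-indexed ``apex'' vertex playing the role of a $v_j$ in the definition of $PF(n,k,s)$, and a careful accounting matches the asymptotic $|PF(n,k,s)|\sim k\binom{s+1}{2}\binom{n}{k-2}$ (respectively $|EM(n,3,2s+1,2)|\sim\binom{2s+1}{2}n$), with the crosses contributing only lower-order terms and the class-(iii) edges contributing $O(1)$, matching the extra $T_1,\ldots,T_s$ summand of $|PF(n,k,s)|$. Summing the three contributions gives $|\mathcal{H}|\le|PF(n,k,s)|$, contradicting the hypothesis.

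\textbf{Inductive step} ($t<s$). Since $|EM(n,k,s-t,1)|=\Theta(n^{k-1})$ dominates $|PF(n,k,s)|=\Theta(n^{k-2})$ for large $n$, the hypothesis implies the base-case hypothesis, so there exists $v_0\in[n]$ with $\nu(\mathcal{H}-v_0)=s-1$. The trivial degree bound $d(v_0)\le\binom{n-1}{k-1}$ combined with the telescoping identity
\[
|EM(n,k,s-t,1)|-|EM(n-1,k,s-t-1,1)|=\binom{n}{k}-\binom{n-1}{k}=\binom{n-1}{k-1}
\]
yields, for $\mathcal{H}':=\mathcal{H}-v_0$ on vertex set $[n]\setminus\{v_0\}$,
\[
|\mathcal{H}'|\ >\ |EM(n-1,k,s-t-1,1)|+|PF(n-s+t,k,t)|,
\]
which is exactly the hypothesis of the theorem for the instance $(s-1,t)$ (with smaller induction parameter $(s-t)-1$). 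By the inductive hypothesis there exists $X'\subset[n]\setminus\{v_0\}$ with $|X'|=s-t$ and $\nu(\mathcal{H}'-X')=t-1$. Setting $X:=\{v_0\}\cup X'$ gives $|X|=s-t+1$ and $\nu(\mathcal{H}-X)=\nu(\mathcal{H}'-X')=t-1$; the lower bound $\nu(\mathcal{H}-X)\ge t-1$ is automatic since removing $s-t+1$ vertices kills at most $s-t+1$ edges of any fixed maximum matching of $\mathcal{H}$.

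\textbf{Main obstacle.} The principal difficulty is the base case, a sharp Hilton-Milner-type stability theorem for matchings. The extremal family $PF(n,k,s)$ itself has $\nu=s$ while satisfying $\nu(PF(n,k,s)-v)=s$ for every $v$, so the center/cross/contained-in-$V$ decomposition must be carried out with exact leading constants (the $k\binom{s+1}{2}$, respectively $\binom{2s+1}{2}$ for $k=3$) and with a careful treatment of the extra matching edges $T_1,\ldots,T_s$. By contrast, once the base case is established the inductive step collapses to the trivial degree bound and the telescoping identity above, so all the substantive work is concentrated in establishing the $t=s$ case.
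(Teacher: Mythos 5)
The paper does not actually prove this theorem: it is quoted from \cite{KM17}, with a remark that it ``can easily be proved using results in \cite{FR18} (Theorems 4.1 and 4.2)''. So the only thing to assess is whether your argument is self-contained and correct. Your inductive step is: the telescoping identity $|EM(n,k,s-t,1)|-|EM(n-1,k,s-t-1,1)|=\binom{n}{k}-\binom{n-1}{k}=\binom{n-1}{k-1}$ is right, the $|PF(n-s+t,k,t)|$ (resp.\ $|EM(n-s+t,3,2s+1,2)|$) term is literally unchanged when $(n,s)$ is replaced by $(n-1,s-1)$, and the trivial bound $d(v_0)\le\binom{n-1}{k-1}$ then hands you exactly the hypothesis for the instance $(n-1,s-1,t)$; together with $\nu(\mathcal{H}-v_0)=s-1$ this is a clean and complete reduction of the general case to the case $t=s$.

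The genuine gap is that the case $t=s$ --- which, as you yourself say, carries all of the substance --- is not proved. What you give is a plan, not an argument: the claims that shiftedness ``pins each center to a small-indexed apex vertex'', that the crosses contribute only lower-order terms, and that the three contributions sum to at most $|PF(n,k,s)|$ are precisely the theorem to be proved, and none of them is justified. Since the theorem asserts the bound is tight, an asymptotic accounting with unspecified error terms cannot close the argument; you need the exact count. For $k=3$ the extremal family is $EM(n,3,2s+1,2)$, whose structure (every edge meets a fixed $(2s+1)$-set in at least two points) is not of the center/apex type at all, and your decomposition does not explain how that configuration emerges. There is also a smaller but real issue at the very start: Fact~\ref{fact} gives only $\nu(S_{ij}(\mathcal{H}))\le\nu(\mathcal{H})$, not equality, so ``shifting preserves $\nu$'' is unjustified, and even if $\nu$ were preserved, the conclusion ``there exists $X$ with $\nu(\mathcal{H}-X)=t-1$'' does not obviously transfer from the shifted family back to the original one. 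The missing base case is essentially Frankl's second-level stability theorem for the Erd\H{o}s matching problem (Theorems 4.1 and 4.2 of \cite{FR18}); without importing or reproving that result, the proposal does not establish the statement.
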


Note that the proof of Theorem \ref{thm-KM17-induction-n^k-2} was not included in \cite{KM17},
but one can easily prove it using results in \cite{FR18} (Theorems 4.1 and 4.2 in \cite{FR18}).
%
%

We also need the following structure theorems for intersecting families.  Let
%
\begin{itemize}
\item $H_{0}^{3}(n) = \left\{ A \in \binom{[n]}{3}: |A\cap [3]|\ge 2 \right\}$.
\item $H_{1}^{3}(n) = \left\{ A \in \binom{[n]}{3}: 1\in A \text{ and } |A\cap \{2,3,4\}|\ge 1 \right\} \cup \{234\}$.
\item $H_{2}^{3}(n) = \left\{ A \in \binom{[n]}{3}: 1\in A \text{ and } |A\cap \{2,3\}|\ge 1 \right\} \cup \{234,235,145\}$.
\item $H_{3}^{3}(n) = \left\{ A \in \binom{[n]}{3}: \{1,2\}\in A \right\} \cup \{134,135,145,234,235,245\}$.
\item $H_{4}^{3}(n) = \left\{ A \in \binom{[n]}{3}: \{1,2\}\in A \right\} \cup \{ 134, 156, 235, 236, 245, 246 \}$.
\item $H_{5}^{3}(n) = \left\{ A \in \binom{[n]}{3}: \{1,2\}\in A \right\} \cup \{ 134, 156, 136, 235, 236, 246 \}$.
\item For $k\ge 4$ and $0\le i \le 5$, $H_{i}^{k}(n)=\left\{ A \in \binom{[n]}{k}: \exists B\in H_{i}^{3}(n) \text{ such that } B\subset A \right\}$.
\end{itemize}

\begin{fact}\label{fact-size-Hi(n)}
The following holds for all $n\ge k \ge 3$.
\begin{itemize}
\item $|H_{0}^{k}(n)| = 3\binom{n-3}{k-2} + \binom{n-3}{k-3} < 3\binom{n}{k-2} - 2\binom{n}{k-3}$.
\item $|H_{1}^{k}(n)| = 3\binom{n-4}{k-2} + 4\binom{n-4}{k-3} + \binom{n-4}{k-4} < 3\binom{n}{k-2} - 2\binom{n}{k-3}$.
\item $\max\{|H_{i}^{k}(n)|: 2\le i \le 5\} \le 2 \binom{n}{k-2}$.
\end{itemize}
\end{fact}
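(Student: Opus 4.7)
The plan is to compute each $|H_i^k(n)|$ by a direct partition argument. The key observation is that $A \in H_i^k(n)$ translates into a combinatorial condition on $A \cap V_i$ for a small ``core'' $V_i \subset [6]$ that supports all the generating triples of $H_i^3(n)$; partitioning $k$-sets by their intersection pattern with $V_i$ then gives a sum of binomial coefficients, after which the stated upper bounds follow from iterated Pascal's identity.

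For $i=0$, the condition is exactly $|A\cap [3]|\ge 2$, giving $|H_0^k(n)| = 3\binom{n-3}{k-2} + \binom{n-3}{k-3}$ on the nose. For the strict inequality I would expand $3(\binom{n}{k-2}-\binom{n-3}{k-2}) = 3\binom{n-1}{k-3}+3\binom{n-2}{k-3}+3\binom{n-3}{k-3}$ via Pascal and verify this exceeds $\binom{n-3}{k-3}+2\binom{n}{k-3}$. For $i=1$, $A \in H_1^k(n)$ iff $\{2,3,4\}\subset A$ or ($1\in A$ and $A\cap\{2,3,4\}\ne\emptyset$); conditioning first on whether $1\in A$ and then on $|A\cap\{2,3,4\}| \in \{1,2,3\}$ produces the summands $3\binom{n-4}{k-2}$, $3\binom{n-4}{k-3}$, $\binom{n-4}{k-4}$, together with an extra $\binom{n-4}{k-3}$ from sets with $\{2,3,4\}\subset A$ and $1\notin A$, summing to the claimed formula; the inequality goes through as in the $i=0$ case.

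For $i\in\{2,3,4,5\}$, each $H_i^3(n)$ decomposes into a ``bulk'' part (all triples through $\{1,2\}$ when $i\in\{3,4,5\}$, or all triples $\{1,x,y\}$ with $x\in\{2,3\}$ when $i=2$) plus at most six exceptional triples supported on $[6]$. A union bound gives $|H_i^k(n)|$ at most the number of $k$-sets hitting the bulk, which is at most $\binom{n-2}{k-2}+\binom{n-3}{k-2}$ in each case (using $\binom{n-1}{k-1}-\binom{n-3}{k-1}=\binom{n-2}{k-2}+\binom{n-3}{k-2}$ for $i=2$), plus at most $6\binom{n-3}{k-3}$ from the exceptional triples. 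Since $2\binom{n}{k-2} - \binom{n-2}{k-2} - \binom{n-3}{k-2} = \binom{n-1}{k-2}+\binom{n-1}{k-3}+\binom{n-2}{k-3}$ by two applications of Pascal, and this quantity easily dominates $6\binom{n-3}{k-3}$ for all $n\ge k\ge 3$, the bound $|H_i^k(n)| \le 2\binom{n}{k-2}$ follows.

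The main obstacle is purely bookkeeping; there is no deep ingredient. The one subtlety is that for $i\in\{0,1\}$ the desired upper bound $3\binom{n}{k-2}-2\binom{n}{k-3}$ is sharper than the crude $2\binom{n}{k-2}$ used for $i\in\{2,\dots,5\}$, so one cannot afford a union bound there and must use the exact partition formula instead. Handling the six explicit exceptional triples appearing in $H_4^3$ and $H_5^3$ is the most notation-heavy step, but since these contribute only the lower-order $\binom{n-3}{k-3}$ term, no new ideas are needed.
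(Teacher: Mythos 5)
Your exact-count formulas for $|H_0^k(n)|$ and $|H_1^k(n)|$ are correct, and partitioning by the trace $A\cap V_i$ on a small core is indeed the natural way to obtain them; the decomposition for $i\in\{2,\dots,5\}$ into a ``bulk'' plus a bounded number of exceptional triples is also the right idea. The gap is in the step you wave at: ``verify this exceeds $\ldots$.'' You never carry out the verification, and had you done so you would have found that it fails. For item 1 the required inequality reduces to
$3\binom{n-1}{k-3}+3\binom{n-2}{k-3}+2\binom{n-3}{k-3}>2\binom{n}{k-3}$,
which is false once $n$ is close to $k$: for $n=k=7$ one has $3\binom{7}{5}-2\binom{7}{4}=63-70=-7<1=|H_0^7(7)|$, and more generally $3\binom{n}{k-2}-2\binom{n}{k-3}<0$ whenever $n<\frac{5k-13}{3}$. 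The same happens in item 2. So the Fact as stated (``for all $n\ge k\ge 3$'') is actually not literally true for the first two items; it holds once $n$ is sufficiently large relative to $k$, which is the only regime in which the paper invokes it (in the proof of Theorem~\ref{thm-EM(n,k,s)-n^k-2}, where $n\to\infty$). A careful proof must either add a hypothesis such as $n$ large, or restrict the range of $n$, and you should note this rather than assert the verification.

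Two further points on the third item. Your Pascal computation is off: $2\binom{n}{k-2}-\binom{n-2}{k-2}-\binom{n-3}{k-2}=2\binom{n-1}{k-3}+2\binom{n-2}{k-3}+\binom{n-3}{k-3}$, not $\binom{n-1}{k-2}+\binom{n-1}{k-3}+\binom{n-2}{k-3}$ (e.g.\ at $n=10,\,k=5$ these give $149$ and $148$). More importantly, the crude bound you use, bulk at most $\binom{n-2}{k-2}+\binom{n-3}{k-2}$ plus $6\binom{n-3}{k-3}$ for the exceptional triples, exceeds $2\binom{n}{k-2}$ for $k=3$ and several small $n$ (e.g.\ $n=3,\dots,7$), even though the claim of item 3 itself does hold for all $n\ge k\ge 3$. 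The slack you give away is that for $i\ge 3$ the bulk is only $\binom{n-2}{k-2}$ (there is no $\binom{n-3}{k-2}$ term), and for $n<6$ not all six exceptional triples on $[6]$ exist. For a proof valid in the full stated range you must tighten the bulk bound per case and handle small $n$ directly; for the asymptotic regime the paper actually uses, your argument is fine after fixing the Pascal identity.
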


\begin{dfn}
Let $n\ge 2k$ and $k \ge 3$.
Let $Y = [2,k+1], Z=[k+2,2k]$. The $n$-vertex $k$-graph $PF(n,k)$ consists of all $k$-subsets of $[n]$ containing a member
of the family
\begin{align}
G = & \left\{ A: 1\in A\text{ and } |A\cap Y| = 1\text{ and } |A\cap Z|= 1 \right\}  \cup \notag\\
    & \left\{ Y, \{1,k,k+1\}, Z\cup \{k\},Z\cup \{k+1\} \right\}.\notag
\end{align}
\end{dfn}
Note that $|PF(n,k)| = O(n^{k-3})$.

\begin{thm}[Kostochka and Mubayi, \cite{KM17}]\label{thm-MK17-structure-nv(H)=s-stability}
Let $k\ge 4$ be fixed and $n$ be sufficiently large.
Then there is $C>0$ such that for every intersecting $n$-vertex $k$-graph $\mathcal{H}$ with $|\mathcal{H}|>|PF(n,k)| = O(n^{k-3})$,
one can remove from $\mathcal{H}$ at most $Cn^{k-4}$ edges
so that the resulting $k$-graph $\mathcal{H}'$ is contained in one of
$H_{0}^{k}(n),\ldots,H_{5}^{k}(n),EKR(n,k)$.
\end{thm}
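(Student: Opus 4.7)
The plan is to show that every large intersecting $k$-graph $\mathcal{H}$ has a small ``kernel'' of triples that controls its structure, and to classify the possible kernels using known results on intersecting $3$-graphs. Because $|\mathcal{H}|$ may range from well above $|PF(n,k)| = \Theta(n^{k-3})$ up to $\Theta(n^{k-1})$ while the allowed trimming is only $O(n^{k-4})$, the threshold parameters must be calibrated carefully.

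The first step is a degree-based reduction. For a suitable threshold $\tau \asymp n^{k-4}$, let $\mathcal{C} = \{B \in \binom{[n]}{3} : \deg_{\mathcal{H}}(B) \ge \tau\}$ be the ``heavy-triple kernel'' and trim from $\mathcal{H}$ any edge containing no triple of $\mathcal{C}$. Using $|\mathcal{H}| > |PF(n,k)|$ and a careful double counting of (triple, extension) incidences, one should bound the number of trimmed edges by $O(n^{k-4})$. Let $\mathcal{H}'$ denote the residual family, so that every edge of $\mathcal{H}'$ extends a triple in $\mathcal{C}$.

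The second step is to classify $\mathcal{C}$. The intersecting property of $\mathcal{H}$ together with $\deg_{\mathcal{H}}(B) \ge \tau$ for $B \in \mathcal{C}$ forces $\mathcal{C}$ itself to be an intersecting $3$-graph: if $B_1, B_2 \in \mathcal{C}$ were disjoint then each would extend to at least $\tau$ edges of $\mathcal{H}$, and for $n$ large enough one could pick extensions $A_1 \supset B_1$ and $A_2 \supset B_2$ remaining disjoint in $[n]\setminus (B_1\cup B_2)$, contradicting that $\mathcal{H}$ is intersecting. Once $\mathcal{C}$ is known to be intersecting of size at least an absolute constant depending on $k$, the classification of large intersecting $3$-graphs (Frankl's theorem, refined into its ``seven-configuration'' form) places $\mathcal{C}$ inside one of $H_0^3(n),\ldots,H_5^3(n),EKR(n,3)$. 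Since $H_i^k(n)$ was defined as the $k$-uniform upset of $H_i^3(n)$, $\mathcal{H}'$ is contained in the corresponding $H_i^k(n)$ or $EKR(n,k)$.

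The main obstacle is the $3$-graph classification step: unlike the Hilton--Milner theorem, which distinguishes only $EKR$ from a single runner-up, here one needs the full enumeration of intersecting $3$-graph configurations with size above the $|PF(n,3)|$ threshold, and matching each to one of the seven $H_i^3$ requires careful case analysis. A secondary delicate point is the trimming: the threshold $\tau$ must be high enough that the kernel-intersecting argument goes through, yet low enough that the trimming loses only $O(n^{k-4})$ edges, which constrains the hidden constants in $\tau$ and $C$ in terms of $k$.
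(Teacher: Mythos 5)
This theorem is not proved in the paper at all---it is cited as a result of Kostochka and Mubayi from \cite{KM17}, and the present paper invokes it as a black box in the proof of Theorem~\ref{thm-EM(n,k,s)-n^k-2}. So there is no ``paper's own proof'' to compare against, and your proposal must be judged on its own terms.

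Your kernel-of-heavy-triples plan is in the right family of ideas (this is essentially the $\Delta$-system/kernel method, which the paper mentions in its concluding remarks), but the crucial trimming step is where the real difficulty lies, and as sketched it does not go through. You propose taking the threshold $\tau\asymp n^{k-4}$ and trimming every edge all of whose triples are light, then claim a double count of $(\text{triple},\text{extension})$ incidences gives at most $O(n^{k-4})$ trimmed edges. It does not: each light triple lies in fewer than $\tau$ edges, but there are up to $\binom{n}{3}$ light triples, and each trimmed edge accounts for exactly $\binom{k}{3}$ incidences; dividing gives only $O(n^{3}\cdot n^{k-4}/1)=O(n^{k-1})$ trimmed edges, which is the trivial bound. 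To actually get $O(n^{k-4})$ you must first drastically restrict which triples can be relevant, and the standard way to do that is Frankl's cover theorem (Theorem~\ref{thm-Frankl-cover-number-3} in this paper): since $|\mathcal{H}|>|PF(n,k)|$, we have $\tau(\mathcal{H})\le 2$, so there is a pair $\{u,v\}$ meeting every edge. Only then does the family of candidate triples shrink to $O(n^{2})$, and even so the argument requires a finer analysis of the links of $u$ and $v$, not a one-shot degree threshold. Your proposal currently makes no use of the cover-number bound, which is the engine that makes the small trimming budget feasible.

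A second, more minor gap: to apply the classification of intersecting $3$-graphs you need $\tau(\mathcal{C})\le 2$, which by Theorem~\ref{thm-Frankl-cover-number-3} (for $k=3$) follows once $|\mathcal{C}|>|PF(n,3)|=10$; but your derivation only guarantees $|\mathcal{C}|\gtrsim |\mathcal{H}'|/\binom{n}{k-3}$, a positive constant depending on the hidden constants in your $\tau$ and in $|PF(n,k)|$, so this needs to be verified (or the small-$\mathcal{C}$ cases dispatched directly). Neither of these is a fatal objection to the overall strategy, but both indicate that the heart of the proof---getting from ``$\mathcal{H}$ is large and intersecting'' to a $2$-element cover, and then exploiting the cover to bound the atypical edges---has been deferred rather than supplied.
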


For intersecting $3$-graphs there is a stronger result.
Define
\begin{align}
\tau(\mathcal{H}) = \min\{|S|: S\subset V(\mathcal{H}) \text{ and } |S\cap A|\ge 1 \text{ for all } A\in \mathcal{H}\}. \notag
\end{align}

\begin{thm}[Kostochka and Mubayi, \cite{KM17}]\label{thm-MK17-structure-1}
Let $\mathcal{H}$ be an intersecting $3$-graph and $n = |V(\mathcal{H})| \ge 6$.
If $\tau(\mathcal{H})\le 2$, then $\mathcal{H}$ is contained in one of
$EKR(n,3),H_{1}^{3}(n),\ldots,H_{5}^{3}(n)$.
\end{thm}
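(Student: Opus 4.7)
The plan is case analysis on $\tau(\mathcal{H})\in\{1,2\}$. If $\tau(\mathcal{H})=1$, a universally-covering vertex $v$ can be relabeled as $1$, giving $\mathcal{H}\subset EKR(n,3)$ directly, so the substance is the case $\tau(\mathcal{H})=2$.

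Fix a minimum cover $\{1,2\}$ and decompose $\mathcal{H}=\mathcal{H}_{12}\sqcup\mathcal{H}_1\sqcup\mathcal{H}_2$, where the three parts are edges containing $\{1,2\}$, only $1$, and only $2$, respectively. Both $\mathcal{H}_1$ and $\mathcal{H}_2$ are nonempty by minimality of the cover. Pass to the link pair-graphs $\mathcal{G}_i=\{e\setminus\{i\}:e\in\mathcal{H}_i\}$ on $[n]\setminus\{1,2\}$. The only intersecting constraint not automatically inherited from the cover is between $\mathcal{H}_1$ and $\mathcal{H}_2$, which translates precisely to $(\mathcal{G}_1,\mathcal{G}_2)$ being cross-intersecting: every pair in $\mathcal{G}_1$ meets every pair in $\mathcal{G}_2$. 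The problem thus reduces to classifying nonempty cross-intersecting pair-families up to isomorphism and matching each class to some $H_i^3(n)$ via relabeling.

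The basic structural tool is that for each $\{a,b\}\in\mathcal{G}_1$ one has $\mathcal{G}_2\subset\mathrm{star}(a)\cup\mathrm{star}(b)$, and symmetrically. Iterating yields three regimes. \textbf{(i)} If $V(\mathcal{G}_1\cup\mathcal{G}_2)$ spans at most $3$ vertices, then both $\mathcal{G}_1,\mathcal{G}_2\subset\binom{\{3,4,5\}}{2}$, cross-intersection is automatic, and $\mathcal{H}$ embeds into $H_3^3(n)$. \textbf{(ii)} If all edges of $\mathcal{G}_1\cup\mathcal{G}_2$ share a common vertex $v$, then $\mathcal{H}_1\cup\mathcal{H}_2\subset\{A:v\in A,\ |A\cap\{1,2\}|=1\}$; a sub-case analysis on the outlier edges of $\mathcal{H}_{12}$ (those not containing $v$) embeds $\mathcal{H}$ into $H_1^3(n)$ or $H_2^3(n)$ after relabeling $v$ appropriately. \textbf{(iii)} If $|V(\mathcal{G}_1\cup\mathcal{G}_2)|\ge 4$ with no common vertex, then $\mathcal{G}_1$ contains two disjoint edges $\{a,b\},\{c,d\}$, and cross-intersection forces $\mathcal{G}_2\subset\{\{a,c\},\{a,d\},\{b,c\},\{b,d\}\}$; enumerating the small number of such cross-intersecting pairs recovers exactly the six-edge extras in the definitions of $H_4^3(n)$ and $H_5^3(n)$.

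The main obstacle is regime (iii): one has to enumerate cross-intersecting pairs $(\mathcal{G}_1,\mathcal{G}_2)$ on $\ge 4$ vertices with no universal vertex, and verify that each such configuration matches, up to relabeling, the six extras of $H_4^3$ or $H_5^3$. A secondary bookkeeping issue in regime (ii) is that the image of $v$ in the target $H_i^3$ must be chosen in coordination with the extras of $\mathcal{H}_{12}$ so that the designated outlier edges of $H_i^3$ accommodate every edge of $\mathcal{H}_{12}\setminus\{\{1,2,v\}\}$. Both obstacles are resolved by finite but patient case-checking on the possible sizes and configurations of $\mathcal{G}_1$, $\mathcal{G}_2$, and $\mathcal{H}_{12}$.
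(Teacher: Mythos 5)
This theorem is cited from Kostochka--Mubayi \cite{KM17}; the present paper does not reprove it, so there is no in-paper argument to compare against. Your framework is the natural one and is almost certainly the one used in \cite{KM17}: reduce to $\tau(\mathcal{H})=2$, fix a cover $\{1,2\}$, split $\mathcal{H}$ into $\mathcal{H}_{12},\mathcal{H}_1,\mathcal{H}_2$, and observe that the only non-automatic intersection condition is that the link pair-families $\mathcal{G}_1,\mathcal{G}_2$ be cross-intersecting, while $\mathcal{H}_{12}$ is unconstrained. However, two of your three regimes misidentify their outputs.

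In regime (ii), take $\mathcal{G}_1$ and $\mathcal{G}_2$ both equal to the full star at $v$ and take $\mathcal{H}_{12}$ to be all triples through $\{1,2\}$; then $\mathcal{H}=\{A:|A\cap\{1,2,v\}|\ge 2\}\cong H_0^3(n)$, which is \emph{not} contained in $H_1^3(n)$ or $H_2^3(n)$ (one checks $|H_0^3(n)|=|H_1^3(n)|=3n-8$ but their maximum degrees $2n-5$ and $3n-9$ differ for $n\ge 6$, so $H_0^3(n)\not\subset H_1^3(n)$, and $|H_2^3(n)|$ is smaller still). The clean statement of regime (ii) is simply $\mathcal{H}\subset H_0^3(n)$ after relabeling $v\mapsto 3$, with no sub-case analysis on $\mathcal{H}_{12}$ needed; this also reveals what appears to be a typo in the paper's statement, since $H_0^3(n)$ is absent from the list here but $H_0^k(n)$ is present in the $k\ge 4$ analogue, Theorem~\ref{thm-MK17-structure-nv(H)=s-stability}. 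In regime (iii), the claim that the enumeration yields only the six-edge extras of $H_4^3$ and $H_5^3$ is too strong, because you have bounded $\mathcal{G}_2$ but not $\mathcal{G}_1$. If $\mathcal{G}_2=\{\{a,c\}\}$ is a single pair, cross-intersection only forces $\mathcal{G}_1\subset\mathrm{star}(a)\cup\mathrm{star}(c)$, so $\mathcal{G}_1$ can be arbitrarily large and the resulting $\mathcal{H}$ embeds (after relabeling $a\mapsto 3$, $c\mapsto 4$) into $H_1^3(n)$; if $\mathcal{G}_2=\{\{a,c\},\{a,d\}\}$ is a two-edge star, then $\mathcal{G}_1\subset\mathrm{star}(a)\cup\{\{c,d\}\}$ and $\mathcal{H}$ embeds into $H_2^3(n)$. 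Only when $\mathcal{G}_2$ itself contains two disjoint pairs are both $\mathcal{G}_1$ and $\mathcal{G}_2$ forced to be small, giving the $H_4^3,H_5^3$ configurations. So regime (iii) must branch on the structure of $\mathcal{G}_2$, and its target list is $H_1^3,H_2^3,H_4^3,H_5^3$ rather than just $H_4^3,H_5^3$.
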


The following result shows that the size of an intersecting $3$-graph $\mathcal{H}$ with $\tau(\mathcal{H})\ge 3$ is bounded by a constant.

\begin{thm}[Frankl, \cite{FR80}]\label{thm-Frankl-cover-number-3}
Let $k\ge 3$ and $n$ be sufficiently large.
Then every intersecting $n$-vertex $k$-graph $\mathcal{H}$ with $\tau(\mathcal{H})\ge 3$
satisfies $|\mathcal{H}|\le |PF(n,k)|$.
Moreover, if $k\ge 4$, then equality holds only if $\mathcal{H} \cong PF(n,k)$.
\end{thm}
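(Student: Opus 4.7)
The plan is to prove this classical result of Frankl by the kernel method: locate a bounded-size substructure inside $\mathcal{H}$ whose presence forces every edge of $\mathcal{H}$ to use at least three vertices from a short list, yielding $|\mathcal{H}| = O(n^{k-3})$, and then match the leading constant against $|PF(n,k)|$.

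\textbf{Step 1 (Core triple).} Starting from any $E_1 \in \mathcal{H}$ and iteratively using the covering conditions $\tau(\mathcal{H})\ge 2$ and $\tau(\mathcal{H})\ge 3$, I would select two further edges $E_2, E_3 \in \mathcal{H}$. Specifically, $E_2$ is chosen with $|E_1\cap E_2|$ minimum, and $E_3$ is chosen to avoid a carefully selected pair of vertices inside $E_1\cup E_2$; existence is guaranteed by $\tau(\mathcal{H})\ge 3$. Let $W = E_1\cup E_2\cup E_3$, so $|W|\le 3k$. A short argument using the minimality choices together with $\tau(\mathcal{H})\ge 3$ should then give the key structural fact: for every edge $F \in \mathcal{H}$ one has $|F\cap W|\ge 3$, because otherwise a pair of vertices inside $W$ would cover every edge of $\mathcal{H}$, contradicting $\tau(\mathcal{H})\ge 3$.

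\textbf{Step 2 (Counting).} For each subset $T\subset W$ of size at least $3$, the number of edges of $\mathcal{H}$ with trace $F\cap W=T$ is at most $\binom{n-|W|}{k-|T|}\le\binom{n}{k-3}$. Summing over the at most $2^{3k}$ possible traces immediately gives $|\mathcal{H}|\le C_k\, n^{k-3}$ for a constant depending only on $k$. To obtain the sharp inequality $|\mathcal{H}|\le|PF(n,k)|$, I would refine this by noting that the trace family $\mathcal{T}=\{F\cap W:F\in\mathcal{H}\}$ is itself an intersecting family on $W$ with $\tau(\mathcal{T})\ge 3$, and then maximize the quantity $\sum_{T\in\mathcal{T}}\binom{n-|W|}{k-|T|}$ over all such trace families. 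Identifying the maximizer with the kernel $G$ appearing in the definition of $PF(n,k)$ gives the desired bound.

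\textbf{Step 3 (Uniqueness) and main obstacle.} For the equality case with $k\ge 4$, one must enumerate the finitely many candidate optimal kernels on at most $3k$ vertices that are intersecting with $\tau\ge 3$, and compare their leading-order contributions to $\binom{n}{k-3}$. The main obstacle is precisely this classification: ruling out every competing kernel that matches the leading $\binom{n}{k-3}$-coefficient of $|PF(n,k)|$, so that only the kernel $G$ survives up to isomorphism. This case analysis grows with $k$ and is the structural reason $k=3$ is excluded from uniqueness, since for $k=3$ there genuinely exist sporadic intersecting $3$-graphs with $\tau\ge 3$ (such as those built from the Fano plane) that achieve the same edge count as $PF(n,3)$.
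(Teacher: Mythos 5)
This statement is not proved in the paper at all: it is imported verbatim from Frankl's paper [FR80] and used as a black box in the proof of Theorem 1.10, so there is no in-house argument to compare yours against. Judged on its own terms, your outline has the right general flavour (a kernel/trace argument is indeed the standard tool here), but it contains genuine gaps.

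The main gap is in Step 1. From $|F\cap W|\le 2$ you cannot conclude that "a pair of vertices inside $W$ would cover every edge of $\mathcal{H}$"; an edge with small trace on $W$ gives no information about covers, so the implication is a non sequitur. The correct way to force every edge to carry at least three vertices of a bounded set is the branching argument: every $F$ meets $E_1$ in some $v$; since $\tau(\mathcal{H})\ge 2$ there is an edge $E_2(v)$ avoiding $v$, which $F$ must also meet; since $\tau(\mathcal{H})\ge 3$ there is an edge $E_3(v,u)$ avoiding $\{v,u\}$, giving a third vertex of $F$. Crucially $E_2$ and $E_3$ depend on $v$ and on $\{v,u\}$, so the base set is a union of up to $1+k+k^2$ edges, not three. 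A single triple of edges cannot work in general: an intersecting family has no two disjoint edges, so $E_1\cap E_2\neq\emptyset$, and an edge can meet $E_1\cup E_2\cup E_3$ in only one or two points (already in $PF(n,k)$ itself, for a generic choice of three edges the edges built on triples $\{1,y,z\}$ with $z\notin W$ satisfy $|F\cap W|=2$).

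Step 2 has two further problems. The trace family $\{F\cap W:F\in\mathcal{H}\}$ need not be intersecting, since two edges of $\mathcal{H}$ may meet only outside $W$, so you cannot simply "maximize over intersecting trace families with $\tau\ge 3$." More importantly, the crude trace count yields a bound of order $\binom{|W|}{3}\binom{n}{k-3}$, which exceeds $|PF(n,k)|=\Theta(k^2)\binom{n}{k-3}$ by a factor growing with $|W|$; closing that constant-factor gap, together with the $k\ge 4$ uniqueness, is precisely the hard content of Frankl's theorem and requires his detailed classification of the possible kernels, which your outline defers entirely. (A minor slip: the Fano plane has $7<10=|PF(n,3)|$ edges, so it is not the obstruction to uniqueness at $k=3$, although sporadic ties do exist in that case.)
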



Now we are ready to prove Theorem \ref{thm-EM(n,k,s)-n^k-2}.

\begin{proof}[Proof of Theorem \ref{thm-EM(n,k,s)-n^k-2}]
Let $n$ be sufficiently large and $c = c(k,s)$ be given by $(\ref{hlowerc})$.
We may assume that $\mathcal{H}$ is shifted and $\nu(\mathcal{H})=s$.
For every $v\in [n]$ let $d_{\mathcal{H}}(v) = |\{A\in \mathcal{H}: v\in A\}|$,
and let $\Delta = \max\{d_{\mathcal{H}}(v) : v\in [n]\}$.
Suppose that $m = \sum_{i=h}^{k}\binom{a_i}{i} - \sum_{i=h}^{k}\binom{a_i-s}{i}$ for some integers $a_{k}> \cdots > a_{h} \ge h\ge 1$.
Then by Lemma \ref{lemma-size-shadow-LmEM(n,k,s,t)}, it suffice to show that
\begin{align}
|\partial\mathcal{H}| \ge \sum_{i=h}^{k}\binom{a_i}{i-1}
=|\partial L_{m}EM(n,k,s,1)|. \notag
\end{align}
Note that  $a_k \rightarrow \infty$ as $n \rightarrow \infty$
and so $m \sim s{a_k-1 \choose k-1}$.
\begin{claim}\label{claim-max-degree-H}
$\Delta \le \sum_{i=h}^{k} \binom{a_i-1}{i-1} = (1+o(1))\frac{m}{s}$.
\end{claim}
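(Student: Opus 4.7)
The plan is to establish the claim by a WLOG reduction: I will show that if $\Delta$ exceeds the asserted upper bound, then the target shadow inequality $|\partial\mathcal{H}|\ge\sum_{i=h}^{k}\binom{a_i}{i-1}$ of Theorem~\ref{thm-EM(n,k,s)-n^k-2} already follows, so one may assume throughout the remainder of the proof that $\Delta\le\sum_{i=h}^{k}\binom{a_i-1}{i-1}$. Since $\mathcal{H}$ is shifted, the maximum degree is attained at vertex $1$, so $\Delta=|\mathcal{H}(1)|$ and $|\partial\mathcal{H}|=|\mathcal{H}(1)|+|\partial\mathcal{H}(1)|$ via $\partial\mathcal{H}(\bar 1)\subset\mathcal{H}(1)$, a fact already noted in the paper for shifted families.

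The key observation is that $\sum_{i=h}^{k}\binom{a_i-1}{i-1}$ is already a valid $(k-1)$-cascade representation: the sequence $a_k-1>a_{k-1}-1>\cdots>a_h-1\ge h-1$ is strictly decreasing with the correct lower bound. Suppose, contrary to the assertion, that $|\mathcal{H}(1)|>\sum_{i=h}^{k}\binom{a_i-1}{i-1}$. Viewing $\mathcal{H}(1)$ as a $(k-1)$-graph, the Kruskal--Katona theorem (Theorem~\ref{thm-Kruskal-Katona}) together with the elementary monotonicity $|\partial L_{N}|\le|\partial L_{N'}|$ whenever $N\le N'$ implies
\[
|\partial\mathcal{H}(1)|\ge\sum_{i=h}^{k}\binom{a_i-1}{i-2}.
\]
Combining this with $|\mathcal{H}(1)|>\sum_{i=h}^{k}\binom{a_i-1}{i-1}$ and Pascal's identity yields
\[
|\partial\mathcal{H}|=|\mathcal{H}(1)|+|\partial\mathcal{H}(1)|>\sum_{i=h}^{k}\binom{a_i-1}{i-1}+\sum_{i=h}^{k}\binom{a_i-1}{i-2}=\sum_{i=h}^{k}\binom{a_i}{i-1},
\]
which by Lemma~\ref{lemma-size-shadow-LmEM(n,k,s,t)} is precisely the shadow bound demanded by Theorem~\ref{thm-EM(n,k,s)-n^k-2}. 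Hence we may indeed assume $\Delta\le\sum_{i=h}^{k}\binom{a_i-1}{i-1}$, as claimed.

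For the asymptotic identity, I will use the telescoping formula $\binom{a}{i}-\binom{a-s}{i}=\sum_{j=0}^{s-1}\binom{a-j-1}{i-1}$ to rewrite $m=\sum_{j=0}^{s-1}\sum_{i=h}^{k}\binom{a_i-j-1}{i-1}$. Since $m>m(n,k,s)=\Theta(n^{k-2})$ forces $a_k\to\infty$ as $n\to\infty$, each inner sum is dominated by its $i=k$ term $\binom{a_k-j-1}{k-1}\sim\binom{a_k-1}{k-1}$, so $m\sim s\binom{a_k-1}{k-1}\sim s\sum_{i=h}^{k}\binom{a_i-1}{i-1}$, which gives the asymptotic $\sum_{i=h}^{k}\binom{a_i-1}{i-1}=(1+o(1))m/s$. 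The only step worth care is the clean invocation of Kruskal--Katona together with shadow monotonicity in the displayed chain above; beyond that, no substantive obstacle appears.
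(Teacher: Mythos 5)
Your proposal is correct and follows essentially the same route as the paper: assume some vertex has degree exceeding $\sum_{i=h}^{k}\binom{a_i-1}{i-1}$, bound $|\partial\mathcal{H}|\ge|\mathcal{H}(v)|+|\partial\mathcal{H}(v)|$, apply Kruskal--Katona to the link, and conclude via Pascal's identity that the theorem's target shadow bound already holds. The only (harmless) differences are that you localize to vertex $1$ via shiftedness where the paper works with an arbitrary high-degree vertex, and you spell out the telescoping asymptotics $m\sim s\binom{a_k-1}{k-1}$ that the paper states without proof.
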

\begin{proof}[Proof of Claim \ref{claim-max-degree-H}]
Suppose that there exists $v\in [n]$ with $d_{\mathcal{H}}(v) > \sum_{i=h}^{k} \binom{a_i-1}{i-1}$.
Let $\mathcal{H}(v) = \{A \setminus \{v\} : v\in A \in \mathcal{H}\}$.
Then by the Kruskal-Katona theorem,
\begin{align}
|\partial\mathcal{H}| \ge |\mathcal{H}(v)| + |\partial\mathcal{H}(v)|
> \sum_{i=h}^{k} \binom{a_i-1}{i-1} +  \sum_{i=h}^{k} \binom{a_i-1}{i-2}
= \sum_{i=h}^{k} \binom{a_i}{i-1}, \notag
\end{align}
and we are done.
\end{proof}

We are going to use Theorem \ref{thm-KM17-induction-n^k-2} and Claim \ref{claim-max-degree-H}
to define a sequence of distinct vertices $v_1,\ldots, v_{s-1}$ and a sequence of $k$-graphs $\mathcal{H}_1,\ldots, \mathcal{H}_{s-1}$
such that $\nu(\mathcal{H}_i) = s-i$ and $|\mathcal{H}_i| > (1-o(1))\frac{s-i}{s} m$ for all $1\le i \le s-1$.
Since $\mathcal{H}$ is shifted, we may assume that $v_i = i$ for $1 \le i \le s-1$.

First, by the assumption on the size of $\mathcal{H}$ and Theorem \ref{thm-KM17-induction-n^k-2},
there exists $v_1 \in [n]$ such that $\mathcal{H}_1 := \mathcal{H}-v_1$ satisfies $\nu(\mathcal{H}_1) = s-1$.
By Claim \ref{claim-max-degree-H}, $d_{\mathcal{H}}(v_1) < (1+o(1))m/s$, so $|\mathcal{H}_1| > (1-o(1))\frac{s-1}{s}m$.

Now suppose that we have defined $\mathcal{H}_i$ for some $1\le i \le s-2$ such that
$\nu(\mathcal{H}_i) = s-i$ and $|\mathcal{H}_i| > (1-o(1))\frac{s-i}{s} m$.
Since
\begin{align}
|\mathcal{H}_i|
> (1-o(1)) \frac{s-i}{s} m
 \ge\frac{s-i}{s} c\binom{n}{k-2}  \ge
\begin{cases}
|EM(n,3,2(s-i)+1,2)|, & \text{ for } k=3, \\
|PF(n, k, s-i)|, & \text{ for } k \ge 4,
\end{cases}\notag
\end{align}
by Theorem \ref{thm-KM17-induction-n^k-2},
there exists $v_{i+1} \in [n]$ such that $\mathcal{H}_{i+1} := \mathcal{H}_{i}-v_{i+1}$ satisfies $\nu(\mathcal{H}_{i+1}) = s-i-1$.
By Claim \ref{claim-max-degree-H}, $|\mathcal{H}_{i+1}| > (1-o(1))\frac{s-i-1}{s}m$.

Note that $\mathcal{H}_{s-1}$ satisfies $\nu(\mathcal{H}_{s-1}) = 1$ and
\begin{align}
|\mathcal{H}_{s-1}| > (1-o(1))\frac{1}{s} m \ge \frac{1}{s} c \binom{n}{k-2} \ge 3\binom{n}{k-2} > |PF(n,k)|. \notag
\end{align}

If $k=3$, then by Theorem \ref{thm-Frankl-cover-number-3}, $\tau(\mathcal{H}_{s-1}) \le 2$.
Therefore, by Theorem \ref{thm-MK17-structure-1}, $\mathcal{H}$ is contained in one of $EKR(n,3),H_{1}^{3}(n),\ldots,H_{5}^{3}(n)$.
Since $|\mathcal{H}_{s-1}| > 3n$ and
by Fact \ref{fact-size-Hi(n)}, $\max_{0\le i \le 5}\{|H_{i}^{3}|\} \le  3n-8$,
we must have $\mathcal{H} \subset EKR(n,3) = EM(n,3,1,1)$.
Note that $\mathcal{H}_{s-1}$ is obtained from $\mathcal{H}$ by removing $s-1$ vertices, so $\mathcal{H} \subset EM(n,3,s,1)$.
Therefore, by Lemma \ref{lemma-shadow-k-s-t-exact}, $|\partial\mathcal{H}| \ge |\partial L_{m}EM(n,3,s,1)|$
and we are done.

Now we may assume that $k\ge 4$.
Then, by Theorem~\ref{thm-MK17-structure-nv(H)=s-stability}, one can remove at most $Cn^{k-4}$ edges from $\mathcal{H}_{s-1}$
such that the resulting $k$-graph $\mathcal{H}'$ is contained in one of $H_{0}^{k}(n),\ldots,H_{5}^{k}(n)$, $EKR(n,k)$.
Note that $|\mathcal{H}'| \ge |\mathcal{H}_{s-1}| - Cn^{k-4} > 3\binom{n}{k-2} - \binom{n}{k-3}$
and by Fact \ref{fact-size-Hi(n)}, $\max_{0\le i \le 5}\{|H_{i}^{k}|\} < 3\binom{n}{k-2} - 2\binom{n}{k-3}$,
so $\mathcal{H}' \subset EKR(n,k) = EM(n,k,1,1)$.
Here we need $n$ to be sufficient large so that $Cn^{k-4} < \binom{n}{k-3}$.

Note that $\mathcal{H}_{s-1}$ is obtained from $\mathcal{H}$ by removing $s-1$ vertices.
If $\mathcal{H}_{s-1} \subset EM(n,k,1,1)$, then $\mathcal{H} \subset EM(n,k,s,1)$ and by Lemma \ref{lemma-shadow-k-s-t-exact} we are done.
So we may assume that $\mathcal{H}_{s-1} \not\subset EM(n,k,1,1)$, i.e. $\mathcal{H}_{s-1} \setminus \mathcal{H}' \neq \emptyset$.
Let $A \in \mathcal{H}_{s-1} \setminus \mathcal{H}'$ and since $\mathcal{H}_{s-1}$ is shifted,
we may assume that $A = \{s+1,\ldots,s+k\}$.
Since $\mathcal{H}_{s-1}$ is intersecting, every edge in $\mathcal{H}'$ must have nonempty intersecting with $A$.
So $\mathcal{H}' \subset HM(n,k,1,k)$.
This implies that
one can remove at most $Cn^{k-4}$ edges from $\mathcal{H}$ such that the resulting $k$-graph $\mathcal{H}''$
satisfies $\mathcal{H}'' \subset HM(n,k,s,k)$.


Let $y\in \mathbb{R}$ satisfy $|\mathcal{H}''| = \binom{y}{k} - \binom{y-s}{k}-\binom{y-s-k}{k-1}$.
Then by Lemma \ref{lemma-shadow-HM(n,k,s,t)-simple},
$|\partial\mathcal{H}|\ge |\partial\mathcal{H}''| \ge \binom{y}{k-1}$.
Let $x\in \mathbb{R}, a_{k}, \ldots, a_h \in \mathbb{N}$ such that $a_k > \cdots >a_h\ge h \ge 1$
and $|\mathcal{H}| = \binom{x}{k}-\binom{x-s}{k} = \sum_{i=h}^{k}\binom{a_i}{i} - \sum_{i=h}^{k}\binom{a_i-s}{i}$.
It is easy to see that $x \le a_{k}+1$.

\begin{claim}\label{claim-y-ge-x+1}
$y > x+1$.
\end{claim}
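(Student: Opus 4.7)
The plan is to compare the two real-valued polynomials $g(z) = \binom{z}{k} - \binom{z-s}{k}$ and $f(z) = \binom{z}{k} - \binom{z-s}{k} - \binom{z-s-k}{k-1}$ at the nearby arguments $z = x$ and $z = x+1$. By construction $g(x) = |\mathcal{H}|$, $f(y) = |\mathcal{H}''|$, and the discussion preceding the claim (together with Theorem~\ref{thm-MK17-structure-nv(H)=s-stability}) gives $|\mathcal{H}''| \ge |\mathcal{H}| - Cn^{k-4}$. Hence it suffices to establish the strict inequality $f(x+1) < g(x) - Cn^{k-4}$; the strict monotonicity of $f$ on the interval $[s+k, n]$ will then force $f(y) > f(x+1)$, i.e.\ $y > x+1$.

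First I would verify that $f$ is strictly increasing on $[s+k,n]$ by expanding
$$f(z+1) - f(z) = \binom{z}{k-1} - \binom{z-s}{k-1} - \binom{z-s-k}{k-2},$$
and telescoping the first difference into $\sum_{j=0}^{s-1}\binom{z-1-j}{k-2} \ge s\binom{z-s}{k-2}$, which dominates $\binom{z-s-k}{k-2}$ as soon as $z > s+k$. Using $\binom{x+1}{k} - \binom{x}{k} = \binom{x}{k-1}$ I would next compute directly
$$f(x+1) - g(x) = \binom{x}{k-1} - \binom{x-s}{k-1} - \binom{x+1-s-k}{k-1},$$
and then apply the telescoping bound $\binom{x}{k-1} - \binom{x-s}{k-1} \le s\binom{x-1}{k-2}$ to obtain
$$g(x) - f(x+1) \ge \binom{x+1-s-k}{k-1} - s\binom{x-1}{k-2}.$$

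The final step is to show that this quantity exceeds $Cn^{k-4}$. For this I would translate the hypothesis on $|\mathcal{H}|$ into a lower bound on $x$: from $g(x) = |\mathcal{H}| \ge (c(k,s)+o(1))\binom{n}{k-2}$ and $g(x) \sim sx^{k-1}/(k-1)!$ one obtains $x = \Theta(n^{(k-2)/(k-1)})$, so $\binom{x+1-s-k}{k-1} = \Theta(n^{k-2})$, whereas the correction $s\binom{x-1}{k-2} = O(x^{k-2})$ has exponent $(k-2)^2/(k-1) < k-2$ and is therefore of strictly smaller order. Since $n^{k-2} \gg Cn^{k-4}$, we conclude $g(x) - f(x+1) > Cn^{k-4}$ for all sufficiently large $n$, which gives $f(y) \ge g(x) - Cn^{k-4} > f(x+1)$, and monotonicity yields $y > x+1$.

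The only subtlety is coordinating the three scales $n^{k-2}$, $x^{k-2} = O(n^{(k-2)^2/(k-1)})$, and $Cn^{k-4}$ so that the main term $\binom{x+1-s-k}{k-1}$ dominates both correction terms simultaneously; this is where the hypothesis $|\mathcal{H}| > c(k,s)\binom{n}{k-2}$ is essential, as it is precisely what forces $x$ to grow at the rate $n^{(k-2)/(k-1)}$. Once that is in hand, the remaining bounds are routine binomial estimates and no further extremal input is needed.
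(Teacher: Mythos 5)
Your proof is correct and follows essentially the same route as the paper: both reduce the claim to showing that $\binom{x+1}{k}-\binom{x+1-s}{k}-\binom{x+1-s-k}{k-1} < \binom{x}{k}-\binom{x-s}{k}-Cn^{k-4}$, using $x=\Theta(n^{(k-2)/(k-1)})$ so that the main term $\binom{x+1-s-k}{k-1}=\Theta(n^{k-2})$ dominates both the telescoped difference $\binom{x}{k-1}-\binom{x-s}{k-1}=O(n^{(k-2)^2/(k-1)})$ and the edge-removal error $Cn^{k-4}$. The only cosmetic difference is that the paper phrases this as a contradiction from assuming $y\le x+1$ (with the monotonicity of the left-hand side left implicit), while you argue directly and spell out the monotonicity of $f$ on $[s+k,n]$, which is a small gain in rigor but not a different argument.
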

\begin{proof}[Proof of Claim \ref{claim-y-ge-x+1}]
Suppose not. Then
\begin{align}
& \binom{x+1}{k}-\binom{x+1-s}{k} -\binom{x+1-s-k}{k-1} \notag\\
\ge & \binom{y}{k}-\binom{y-s}{k} -\binom{y-s-k}{k-1}
\ge \binom{x}{k} - \binom{x-s}{k} - Cn^{k-4}. \notag
\end{align}
Since $|\mathcal{H}| \ge c\binom{n}{k-2}$, $x = \Omega(n^{\frac{k-2}{k-1}})$.
Therefore,
\begin{align}
& \binom{x+1}{k}-\binom{x+1-s}{k} -\binom{x+1-s-k}{k-1} \notag\\
= & \binom{x}{k} - \binom{x-s}{k} + \binom{x}{k-1}-\binom{x-s}{k-1} - \binom{x+1-s-k}{k-1} \notag\\
< & \binom{x}{k} - \binom{x-s}{k} - \frac{1}{2}\binom{x-s-k}{k-1} \notag\\
< & \binom{x}{k} - \binom{x-s}{k} - Cn^{k-4}, \notag
\end{align}
a contradiction. This completes the proof of Claim \ref{claim-y-ge-x+1}.
\end{proof}

By Claim \ref{claim-y-ge-x+1},
\begin{align}
|\partial\mathcal{H}|\ge \binom{y}{k-1} > \binom{x+1}{k-1} \ge \sum_{i=h}^{k}\binom{a_i}{i-1}. \notag
\end{align}
This completes the proof of Theorem \ref{thm-EM(n,k,s)-n^k-2}.
\end{proof}

\section{Concluding Remarks}
Let $\mathcal{H} \subset \binom{[n]}{3}$ be an intersecting family with $|\mathcal{H}| \ge PF(n,3) = 10$.
Then Theorems \ref{thm-MK17-structure-1} and \ref{thm-Frankl-cover-number-3}
completely determine the structure of $\mathcal{H}$.
One can use this structural result to determine the minimum size of $|\partial_{\ell} \mathcal{H}|$ completely for $1\le \ell \le 2$.
However, the calculation is very complicated and tedious, so we omit it here.

As we mentioned before, for $1 \le \ell \le t$ the lower bound for $|\mathcal{H}|$ in Theorem \ref{thm-shadow-t-intersecting-all-n}
above is tight for $t \ge \frac{k-1}{2}$ and can be improved for $t < \frac{k-1}{2}$.
Indeed, one can use the $\Delta$-system method (see \cite{KM17}) to prove the following result.

\begin{thm}\label{thm-t-intersecting-second-level}
Let $t\ge 1$, $k\ge 3$, $\epsilon>0$, and $n$ be sufficiently large.
Suppose that $\mathcal{H} \subset \binom{[n]}{k}$ is $t$-intersecting.
If $t < \frac{k-1}{2}$ and $|\mathcal{H}| > (k-t+\epsilon)\binom{n}{k-t-1}$,
then either $\mathcal{H} \subset AK(n,k,t)$ or $\mathcal{H} \subset EM(n,k,t,t)$.
If $t\ge \frac{k-1}{2}$ and $|\mathcal{H}| > (t+1+\epsilon)\binom{n}{k-t-1}$,
then $\mathcal{H}$ is contained in one of $AK(n,k,t), EM(n,k,t+2,t+1), EM(n,k,t,t)$.
\end{thm}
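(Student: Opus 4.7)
The plan is to use the $\Delta$-system (kernel) method, as developed by Frankl and F\"uredi and applied in \cite{KM17} to closely related stability problems. The strategy is to assign to each edge $A\in\mathcal{H}$ a canonical kernel $K(A)\subseteq A$ of bounded size, to identify the few kernel types that can be ``heavy'' given the $t$-intersection constraint, and to match each heavy kernel type with exactly one of the three extremal families listed in the theorem.

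First, I would apply a sunflower pruning step to reduce to a subfamily with no $r$-sunflower for a large constant $r=r(k)$. By the sunflower lemma, after discarding $o(n^{k-t-1})$ edges, one may assume that every surviving $A\in\mathcal{H}$ has a unique minimal kernel $K(A)\subseteq A$ of size at most some $f(k)$, and that the total number of distinct kernels is bounded in terms of $k$. The size hypothesis then guarantees the existence of a \emph{heavy} kernel $K$, meaning one whose extensions in $\mathcal{H}$ number $\Omega(n^{k-|K|})$, since otherwise $|\mathcal{H}|=O(n^{k-t-2})$, contradicting the assumption.

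Second, I would classify heavy kernels using the $t$-intersection constraint. The extensions of a heavy kernel $K$ form an almost-complete uniform family on $[n]\setminus K$, so any two extensions $A,B$ must satisfy $|A\cap B|\ge t$ through $K$ together with a small overlap on the tails; this forces $|K|\ge t$ and restricts heavy kernels to three types: $|K|=t$, giving the structure of $EM(n,k,t,t)$; $|K|=t+1$ with a rim constraint selecting $k-t+1$ extension directions, giving $AK(n,k,t)$; and $|K|=t+1$ contained in a $(t+2)$-set, giving $EM(n,k,t+2,t+1)$. The three corresponding asymptotic sizes are $\binom{n}{k-t}$, $(k-t+1)\binom{n}{k-t-1}$, and $(t+2)\binom{n}{k-t-1}$ respectively, which is what dictates the two regimes: when $t<(k-1)/2$, the hypothesis $|\mathcal{H}|>(k-t+\epsilon)\binom{n}{k-t-1}$ already exceeds $(t+2)\binom{n}{k-t-1}$ and so rules out the $EM(n,k,t+2,t+1)$ kernel, whereas for $t\ge (k-1)/2$ the weaker threshold $(t+1+\epsilon)\binom{n}{k-t-1}$ only narrows the possibilities to the three stated families.

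Third, I would combine heavy kernels and absorb light edges: if $K_1,K_2$ are two heavy kernels, then generic pairs of their extensions must $t$-intersect, which forces $K_1$ and $K_2$ to coexist inside a single configuration of one of the three types above; and each remaining light edge $A$ must $t$-intersect essentially every extension of every heavy kernel, which via a short local argument pins $A$ inside the same extremal family. The hard part will be the bookkeeping in this final step, since the sunflower pruning introduces an error of up to $o(n^{k-t-1})$ edges that must be reabsorbed, and one has to verify that the $+\epsilon$ slack in the size hypothesis is always enough to rule out hybrid configurations while allowing precisely the listed ones. This is the sort of stability-boosting argument executed in detail in \cite{KM17}, and I would follow that template closely, paying particular care at the boundary case $t=(k-1)/2$ where the two regimes meet.
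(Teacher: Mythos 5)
The paper does not actually supply a proof of this theorem; it only remarks that ``one can use the $\Delta$-system method (see \cite{KM17})'' and leaves the details to the reader. Your sketch is exactly that method, so you are on the intended route. Your size accounting is also consistent with the paper's own computations $|AK(n,k,t)|\sim(k-t+1)\binom{n}{k-t-1}$ and $|EM(n,k,t+2,t+1)|\sim(t+2)\binom{n}{k-t-1}$, and your observation that $t<\frac{k-1}{2}$ is precisely the condition $t+2\le k-t$ that lets the threshold $(k-t+\epsilon)\binom{n}{k-t-1}$ discard the $EM(n,k,t+2,t+1)$ configuration is correct.

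Since there is no proof in the paper to compare against line by line, I can only flag where your sketch is thin. The main gap is the classification of heavy-kernel configurations, which you assert rather than argue. After you rule out a heavy $t$-kernel, what remains is a collection of several heavy kernels of size $t+1$, and you still need to prove a dichotomy: either every pair of these kernels shares a common $t$-set and the leftover coordinates all lie in a window of size at most $k-t+1$ (forcing $\mathcal{H}\subset AK(n,k,t)$ after cleanup), or all the heavy kernels lie inside a common $(t+2)$-set (forcing $\mathcal{H}\subset EM(n,k,t+2,t+1)$). That argument---taking two heavy kernels, extending them generically, and deriving a contradiction with $t$-intersection unless they overlap in one of these two patterns---is the crux and is not written down in your sketch. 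The second soft spot is the final absorption step: after pruning sunflowers and identifying a candidate extremal family $\mathcal{F}$, you must show that \emph{every} edge of the original $\mathcal{H}$ (not just the surviving ones) lies in $\mathcal{F}$, which requires an argument that a single edge outside $\mathcal{F}$ cannot $t$-intersect all the generic extensions of the heavy kernels. You flag this but do not carry it out. Both steps are standard in \cite{KM17} but are genuinely nontrivial to port; as it stands the proposal is an outline rather than a proof.
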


One can easily use Corollary \ref{coro-ell-shadow-k-s-t-exact} to show that for $1\le \ell \le t$,
$|\partial_{\ell} L_{m}AK(n,k,t)| > |\partial_{\ell} L_{m}EM(n,k,t,t)|$ for sufficiently large $n$ and $m$.
Therefore, by Theorem \ref{thm-t-intersecting-second-level}, we obtain the following result.

\begin{fact}
Let $k\ge 3$, $t <\frac{k-1}{2}$, $1\le \ell \le t$, $\epsilon>0$, and $n$ be sufficiently large.
Then every $t$-intersecting family $\mathcal{H} \subset \binom{[n]}{k}$ with $|\mathcal{H}| = m > (k-t+\epsilon)\binom{n}{k-t-1}$
satisfies $|\partial_{\ell}\mathcal{H}| \ge |\partial_{\ell} L_{m}EM(n,k,t,t)|$.
\end{fact}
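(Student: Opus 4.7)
The plan is to combine the structural result Theorem~\ref{thm-t-intersecting-second-level} with the Kruskal--Katona type bound Corollary~\ref{coro-ell-shadow-k-s-t-exact} and the inequality $|\partial_{\ell}L_m AK(n,k,t)|>|\partial_{\ell}L_m EM(n,k,t,t)|$ noted in the paragraph just preceding the statement. By Theorem~\ref{thm-t-intersecting-second-level}, our hypothesis $|\mathcal{H}|>(k-t+\epsilon)\binom{n}{k-t-1}$ forces either $\mathcal{H}\subset EM(n,k,t,t)$ or $\mathcal{H}\subset AK(n,k,t)$. In the first case, Corollary~\ref{coro-ell-shadow-k-s-t-exact} yields $|\partial_{\ell}\mathcal{H}|\ge |\partial_{\ell}L_m EM(n,k,t,t)|$ immediately, so the remaining work is the second case.

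Assume therefore that $\mathcal{H}\subset AK(n,k,t)$. By Fact~\ref{fact} we may assume $\mathcal{H}$ is shifted, and split $\mathcal{H}=\mathcal{H}_0\cup \mathcal{H}_1$, where $\mathcal{H}_0\subset\{[k+1]\setminus\{i\}:i\in[t]\}$ and every set of $\mathcal{H}_1$ contains $[t]$. Writing $\mathcal{G}=\{A\setminus[t]:A\in\mathcal{H}_1\}\subset\binom{[t+1,n]}{k-t}$, every set of $\mathcal{G}$ meets $[t+1,k+1]$, so after relabelling $\mathcal{G}\subset EM(n-t,k-t,k-t+1,1)$, a setting in which Corollary~\ref{coro-ell-shadow-k-s-t-exact} applies. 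The plan is to prove the analog $|\partial_{\ell}\mathcal{H}|\ge |\partial_{\ell}L_m AK(n,k,t)|$ by an induction in the style of Lemma~\ref{lemma-shadow-k-s-t-exact}, bounding the shadow of $\mathcal{H}_1$ via its quotient $\mathcal{G}$ and adding in the easy contribution of $\mathcal{H}_0$ (whose shadow lies in $\binom{[k+1]}{k-\ell}$). Once this Kruskal--Katona-type bound is in hand, the stated comparison $|\partial_{\ell}L_m AK(n,k,t)|>|\partial_{\ell}L_m EM(n,k,t,t)|$ closes the argument.

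The main obstacle will be executing this Kruskal--Katona-style induction cleanly for subfamilies of $AK(n,k,t)$: the $t$ exceptional ``boundary'' sets break the clean cascade structure enjoyed by $EM(n,k,s,t)$ and $HM(n,k,s,t)$, so one has to track their shadow contributions separately and verify that the interaction between $\mathcal{H}_0$ and $\mathcal{H}_1$ under the shift operation does not spoil the inductive step. The second ingredient, the asymptotic comparison $|\partial_{\ell}L_m AK(n,k,t)|>|\partial_{\ell}L_m EM(n,k,t,t)|$, should be a routine computation via Lemma~\ref{lemma-size-shadow-LmEM(n,k,s,t)}, using the leading-order expansion $|AK(n,k,t)|\sim(k-t+1)\binom{n}{k-t-1}$ together with $t<(k-1)/2$, which makes the leading coefficient of the dominant binomial in $|\partial_{\ell}L_m AK(n,k,t)|$ strictly larger than the corresponding coefficient in $|\partial_{\ell}L_m EM(n,k,t,t)|$.
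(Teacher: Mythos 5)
Your overall strategy — invoke Theorem~\ref{thm-t-intersecting-second-level} to split into the cases $\mathcal{H}\subset EM(n,k,t,t)$ and $\mathcal{H}\subset AK(n,k,t)$, dispose of the first by Corollary~\ref{coro-ell-shadow-k-s-t-exact}, and in the second case compare against $L_m EM(n,k,t,t)$ — is exactly the route the paper takes, and your decomposition of $\mathcal{H}\subset AK(n,k,t)$ into the $\le t$ exceptional sets $\mathcal{H}_0$ and the quotient $\mathcal{G}=\{A\setminus[t]:A\in\mathcal{H}_1\}\subset EM(n-t,k-t,k-t+1,1)$ is the right reduction. However, the "main obstacle" you flag is not actually there, and the step you propose as a remedy — proving a tight Kruskal--Katona-type theorem for $AK(n,k,t)$ — is substantially more than what is needed and more than what the paper does. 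You do not need $|\partial_\ell\mathcal{H}|\ge|\partial_\ell L_m AK(n,k,t)|$; you only need $|\partial_\ell\mathcal{H}|\ge|\partial_\ell L_m EM(n,k,t,t)|$, and for that a crude lower bound suffices.

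Concretely, after shifting (which preserves $\mathcal{H}\subset AK(n,k,t)$ because $AK(n,k,t)$ is itself shift-stable), the identity
$|\partial_\ell\mathcal{H}_1|=\sum_{j=0}^{\ell}\binom{t}{\ell-j}|\partial_j\mathcal{G}|$
(for $\ell\le t\le k-t-1$, where $\partial_0\mathcal{G}=\mathcal{G}$) holds because the pieces have distinct traces on $[t]$. Applying Corollary~\ref{coro-ell-shadow-k-s-t-exact} and Lemma~\ref{lemma-shadow-k-s-t-simple} to $\mathcal{G}\subset EM(n-t,k-t,k-t+1,1)$ gives, with $|\mathcal{G}|=\binom{z}{k-t}-\binom{z-k+t-1}{k-t}$, the bounds $|\partial_j\mathcal{G}|\ge\binom{z}{k-t-j}$ for $j\ge1$. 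Meanwhile $|\partial_\ell L_mEM(n,k,t,t)|=\sum_{j=0}^{\ell}\binom{t}{\ell-j}\binom{x-t}{k-t-j}$ with $m=\binom{x-t}{k-t}$. The comparison is now immediate once you observe the scales: $|\mathcal{G}|=\Theta(n^{k-t-1})$ forces $z=\Theta(n)$, while $m=\Theta(n^{k-t-1})$ forces $x=\Theta(n^{(k-t-1)/(k-t)})\ll z$. Thus for each $j\ge1$ the $\mathcal{G}$-term $\binom{z}{k-t-j}$ dwarfs $\binom{x-t}{k-t-j}$, and the only place you lose is the $j=0$ term, where $|\mathcal{G}|=m-|\mathcal{H}_0|\ge m-t$, a loss of at most $t\binom{t}{\ell}=O(1)$, which is swamped by the $\Theta(n^{k-t-1})$ gain at $j=1$. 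So the exceptional sets do not ``break the cascade''; they cost $O(1)$ and can simply be discarded. With this in hand, your outline becomes a complete proof, and it coincides with what the paper calls ``easily use Corollary~\ref{coro-ell-shadow-k-s-t-exact}''. In short: right plan, but replace the envisaged full induction for $AK(n,k,t)$ by this direct two-line estimate.
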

\bibliographystyle{abbrv}
\bibliography{shadow_intersecting}
\end{document}